\documentclass[11pt,a4paper]{amsart}

\usepackage[left=2.8cm, top=3cm,bottom=3cm,right=2.8cm]{geometry}

\usepackage{mathtools,amssymb,amsthm,mathrsfs,calc,graphicx,stmaryrd,xcolor,dsfont,pgfplots,bbm,float,array,epsfig,color}
\usepackage[numbers,square]{natbib}
\usepackage[british]{babel}
\usepackage{amsfonts,amsmath,amsthm,amssymb}
\usepackage{verbatim}
\usepackage{multirow}
\usepackage{tikz}
\usepackage{bbm}
\usepackage[shortlabels]{enumitem}
\usepackage{subcaption}
\usepackage[hidelinks]{hyperref}

\numberwithin{equation}{section}
\numberwithin{figure}{section}

\newtheorem{thm}{Theorem}[section]
\newtheorem{lem}[thm]{Lemma}
\newtheorem{cor}[thm]{Corollary}
\newtheorem{prop}[thm]{Proposition}

\theoremstyle{remark}
\newtheorem{rem}[thm]{Remark}
\newtheorem{example}[thm]{Example}

\theoremstyle{definition}

\numberwithin{equation}{section}

\newcommand{\tod}{\overset{{\rm d}}{\longrightarrow}}
\newcommand{\topr}[1][]{\overset{\mathbb{P}^{#1}}{\longrightarrow}}

\renewcommand{\P}{\mathbb{P}}
\newcommand{\E}{\mathbb{E}}
\newcommand{\Ev}{\vec{\E}}
\newcommand{\F}{\mathcal{F}}
\newcommand{\G}{\mathbb{G}}
\newcommand{\M}{\mathcal{M}}
\newcommand{\N}{\mathbb{N}}
\newcommand{\R}{\mathbb{R}}
\newcommand{\Z}{\mathbb{Z}}
\newcommand{\C}{\mathbb{C}}
\newcommand{\Var}{\mathbb{V}{\rm{ar}}}
\newcommand{\Varv}{\vec{\mathbb{V}}{\rm{ar}}}
\newcommand{\A}{\mathcal{A}}
\newcommand{\Pow}{\mathcal{P}}
\newcommand{\Tree}{\mathcal{T}}
\newcommand{\UHTree}{\mathcal{I}}
\newcommand{\Lap}{\mathcal{L}}
\newcommand{\Gen}{\mathcal{G}}
\newcommand{\cZ}{\mathcal{Z}}
\newcommand{\Av}{\vec{A}}

\newcommand{\cC}{\mathcal{C}}
\newcommand{\Cc}{\mathcal{C}}
\newcommand{\cG}{\mathcal{G}}
\newcommand{\cN}{\mathcal{N}}
\newcommand{\Surv}{\mathcal{S}}
\newcommand{\dLambda}{{\partial\Lambda}}
\newcommand{\intLambda}{{\Lambda \setminus \dLambda}}
\renewcommand{\Im}{{\rm Im}}

\newcommand{\eps}{\varepsilon}
\newcommand{\imag}{\mathrm{i}}
\DeclareMathOperator{\1}{\mathbbm{1}}

\newcommand{\I}{\mathbf{I}}
\newcommand{\Id}{\mathbf{I}_d}
\newcommand{\Ip}{\mathbf{I}_p}
\newcommand{\Ik}{\mathbf{I}_k}
\newcommand{\Ikl}{\mathbf{I}_{k_\lambda}}
\newcommand{\In}{\mathbf{I}_n}

\newcommand{\bb}{\mathbf{b}}

\newcommand{\be}{\mathbf{e}}
\newcommand{\bu}{\mathbf{u}}
\newcommand{\bv}{\mathbf{v}}
\newcommand{\bw}{\mathbf{w}}

\newcommand{\bq}{\mathbf{q}}
\newcommand{\bxi}{\boldsymbol{\xi}}
\newcommand{\bmu}{\boldsymbol{\mu}}
\newcommand{\bV}{\mathbf{V}}
\renewcommand{\Re}{{\rm Re}}
\newcommand{\trans}{\intercal}
\newcommand{\dd}{\mathrm{d}}

\newcommand{\stirfst}[2]{\genfrac{[}{]}{0pt}{}{#1}{#2}}
\newcommand{\stirscd}[2]{\genfrac{\{}{\}}{0pt}{}{#1}{#2}}

\definecolor{ala}{RGB}{10,100,30}

\begin{document}

\title[Asymptotic fluctuations in multi-type CMJ processes]{Asymptotic fluctuations in supercritical multi-type Crump--Mode--Jagers processes}

\authors{
	Konrad~Kolesko\footnote{Wroc{\l}aw University of Science and Technology, Poland. Email: konrad.kolesko@pwr.edu.pl}
	\and
	Alicja~Ko\l{}odziejska\footnote{University of Gie\ss en, Germany and University of Wroc\l{}aw, Poland. Email: alicja.kolodziejska@uni-giessen.de, alicja.kolodziejska@math.uni.wroc.pl}
	\and
	Matthias~Meiners\footnote{University of Gie\ss en, Germany. Email: matthias.meiners@math.uni-giessen.de}
}

\begin{abstract}
	We consider an irreducible, supercritical multi-type general branching process (Crump--Mode--Jagers process) with finite type space, counted with a random characteristic. We prove that, under certain second moment assumptions, a central limit theorem holds for the process. Our result extends the single-type central limit theorem obtained recently in [Ann.\ Probab.\ 52 (2024), no.\ 4, 1538-1606] and unifies various limiting results for specific branching processes.
\end{abstract}

\date{}
\maketitle
{\footnotesize \noindent \textbf{Keywords:} {Asymptotic fluctuations, central limit theorem, multi-type general branching process, random characteristic, Laplace transform, Nerman’s martingale} \\
\textbf{MSC2020 subject classifications:} primary 60J80; secondary 60F05, 60G44.}
\thispagestyle{empty}

\tableofcontents

\section{Introduction}		\label{sec:Intro}

In the present paper, we continue the research from \cite{Iksanov+al:2024} and extend the central limit theorem
for the single-type Crump--Mode--Jagers (CMJ) branching process counted with random characteristic
proven there to the irreducible multi-type model with finite type space.

\subsection{The multi-type Crump--Mode--Jagers model}	\label{subsec:CMJ model}

CMJ processes, also called general branching processes,
are special branching processes that can be used to model the evolution of populations.
In the CMJ model, the population consists of individuals,
each of whom produces a random number of offspring independently of all other individuals in the current and previous generations.
The offspring, in turn, reproduce according to the same mechanism, giving rise to a branching structure.
The CMJ model adopts a natural and general perspective on reproduction:
individuals may be born at various ages during the lifetime of a potential parent.
Each birth event may produce one or several offspring.
Formally, the reproduction of an individual takes place at the points of a random point process,
translated by the individual’s birth time.
Moreover, individual life spans are allowed to be random and need not be independent of the reproduction process of the individual.
The general branching process counted with a random characteristic at time $t$ is defined as the sum over all individuals ever born,
where each individual’s contribution is given by a random characteristic that may depend on its entire life history
(e.g., age at $t$, lifespan). This unified formulation simultaneously captures a wide range of quantities of interest,
including the total number of births up to time $t$, the population size at time $t$,
and the number of individuals alive at time $t$ whose age is below a given threshold $a>0$.

Multi-type CMJ processes further assign a type to each individual,
with that individual’s life span, reproduction and random characteristic potentially depending on its type.
This provides even greater flexibility in modeling.
A precise description of the CMJ model is given in Section \ref{sec:setup} below.

The general branching process was introduced independently in the late 1960s by Ryan \cite{Ryan:1968},
by Crump and Mode \cite{Crump+Mode:1968}, and by Jagers \cite{Jagers:1969} in order to unify and extend existing branching models.
In particular, the model encompasses the Bienaym\'e--Galton--Watson, Yule, continuous-time Markov branching,
Sevastyanov, and Bellman--Harris processes.

Today, the applications of general branching processes extend far beyond their original purpose of modeling biological populations
and can be found in fields as diverse as biology, ecology, physics, and computer science.
Several monographs \cite{Harris:1963,Asmussen+Hering:1983,Athreya+Ney:1972,Haccou+al:2007,Jagers:1975,Kimmel+Axelrod:2015}
offer comprehensive introductions to the field.

We do not aim to provide a comprehensive survey of all applications of the multi-type CMJ process here;
instead, we highlight representative examples, including cell proliferation and differentiation \cite{Nordon+al:2011},
modeling cancer cell growth \cite{Durrett:2015,Foo+Leder:2013,Sehl+al:2011}.

Overall, a particularly important area of application is provided by multi-type continuous-time Markov branching processes,
which are contained in the CMJ model as a special case as briefly noted above.
This is especially relevant because they are connected to urn models via the Athreya--Karlin embedding \cite{Athreya+Karlin:1968},
so that any model that can be coupled to an urn model can also be studied using multi-type Markov branching processes,
and therefore by means of CMJ processes.
This connection has been studied in depth in \cite{Janson:2004}, which also presents various applications.
That paper serves as a comprehensive reference for the literature up to 2004.
We revisit
and extend results on urn models and related coupled models,
including in particular balanced and cyclic P\'olya urns \cite{Mueller+Neininger:2018},
the elephant random walk \cite{Baur+Bertoin:2016,Guerin+al:2025}, and $m$-ary search trees \cite{Chauvin+al:2014}.
Detailed analyses of specific models are provided in Section \ref{sec:applications} below.

\subsection{Laws of large numbers and central limit theorems for the CMJ model}	\label{subsec:LLN and CLTs}
The natural starting point for studying the asymptotic behavior of a supercritical multi-type CMJ process
is the weak and strong law of large numbers.
In the Malthusian setting, which we introduce in detail in Section \ref{subsec:assumptions},
there exists an $\alpha > 0$, referred to as the Malthusian parameter,
such that the general branching process counted with random characteristic $\varphi$,
which we denote by $(\cZ_t^\varphi)_{t \geq 0}$, satisfies
\begin{equation}	\label{eq:LLN}
e^{-\alpha t} \cZ_t^\varphi \to c W	\quad	\text{as } t \to \infty
\end{equation}
for some constant $c \geq 0$ and a finite random variable $W \geq 0$
provided that $\varphi$ satisfies appropriate conditions.

The laws of large numbers were first established by Nerman \cite{Nerman:1979,Nerman:1981}
in the non-lattice case, for both the single-type model and the case of finitely many types.
For the latter case, these laws of large numbers were later reproved \cite{Iksanov+Meiners:2015}
through a reduction to the single-type case, in a paper primarily concerned with the rate of convergence in the laws of large numbers.
Earlier results for special, mainly Markovian, cases exist; for a historical overview, see the introduction of \cite{Nerman:1981}.
The law of large numbers for multi-type CMJ processes with an abstract type space
is due to Jagers \cite{Jagers:1989}.

In both the single-type and multi-type settings, the law of large numbers
is closely linked to the convergence of the intrinsic martingale, the limit of which is the random variable $W$ from \eqref{eq:LLN}.
In the case of an abstract type space, it is studied in \cite{Jagers:1989} as well as in \cite{Olofsson:2009}
using the spinal decomposition.

Central limit theorems for CMJ processes
have first been addressed in the special case of Markov branching.
In the setting of a multi-type continuous-time Markov branching process with a finite type space,
where reproduction occurs only at death,
Athreya \cite{Athreya:1969a,Athreya:1969b} established a central limit theorem
and Janson \cite{Janson:2004} proved a functional central limit theorem.
Asmussen and Hering \cite[Section VIII.3]{Asmussen+Hering:1983} established results
concerning the asymptotic fluctuations of multi-type Markov branching processes with general type spaces.
In principle, these also encompass the single-type general branching process,
which may be regarded as a Markov process where an individual’s type at time~$t$ is given by its complete life history up to that time.
However, because this type space is vast, the assumptions of \cite{Asmussen+Hering:1983} are usually violated,
except in special cases such as the Galton--Watson process.

Beyond the Markov case, there are mainly results for the single-type case.
In \cite{Iksanov+al:2024},
in the setting where the Malthusian parameter $\alpha > 0$ exists and the intrinsic martingale is uniformly integrable,
a central limit theorem for single-type CMJ process counted with random characteristic is established. More precisely,
under suitable as\-sump\-tions---including the finiteness of second moments---it is shown that there exist $k \in \N_0$ and a function $H(t)$,
given as a finite random linear combination of functions of the form $t^j e^{\lambda t}$ with $\alpha/2 \leq \Re(\lambda) < \alpha$,
such that
\begin{equation}	\label{eq:CLT CMJ}
\frac{\cZ_t^{\varphi} - a e^{\alpha t} W - H(t)}{\sqrt{t^k e^{\alpha t}}}
\end{equation}
converges in distribution to a normal random variable with random variance.
This result unifies and extends various central limit theorem-type results for specific branching processes.
Before doing so, however, it should be noted that this result has several predecessors and, by now, also a number of successors.

Jagers and Nerman \cite{Jagers+Nerman:1984b} proved a limit theorem establishing convergence in distribution for CMJ processes.
However, the conditions in the cited paper can be challenging to verify, even for relatively simple characteristics.
Janson \cite{Janson:2018} analyzed the asymptotic fluctuations of single-type supercritical CMJ processes in the lattice case.
Janson and Nei\-nin\-ger \cite{Janson+Neininger:2008}
investigated Kolmogorov’s conservative fragmentation model, recast it in the framework of general branching processes,
and established a second-order convergence result for the number of individuals born up to time $t$
in a CMJ process, relying on restrictive assumptions on the reproduction point process
that stem from the fragmentation model.
Henry \cite{Henry:2017} proved a central limit theorem for supercritical binary homogeneous CMJ processes.
A further related contribution is the work of Charmoy, Croydon, and Hambly \cite{Charmoy+al:2017},
who analyze fluctuations of the eigenvalue counting function associated with certain random fractals.
This problem can likewise be treated via a central limit theorem for a suitably chosen centered CMJ process,
which they prove.
However, in their setting the random characteristic may depend on an individual’s progeny,
placing the model outside the scope of the framework described above.

The method developed in \cite{Iksanov+al:2024} was adapted by Kolesko and Sava-Huss \cite{Kolesko+Sava-Huss:2023}
to establish a limit theorem in the spirit of \eqref{eq:CLT CMJ} for Galton--Watson processes with finitely many types.  
In this setting, the terms $e^{\lambda t}$ that appear in $H(t)$ in \eqref{eq:CLT CMJ} can be written as $\gamma^{\,t}$,  
where the $\gamma$ are the eigenvalues of the mean offspring matrix.
The eigenvalues $\gamma \geq \sqrt{\rho}$ then arise in an expansion of $\cZ_t^\varphi$,
where $\rho = e^\alpha$ denotes the dominant eigenvalue.

Further, the proof technique established in \cite{Iksanov+al:2024} was adapted in \cite{Berzunza+Connor:2025}
to also cover random characteristics $\varphi$ that may depend on an individual’s life history as well as on its descendant processes up to a fixed generation in order to make the result applicable in the study of fringe trees \cite{Holmgren+Janson:2017}.

Another related paper in the single-type case is the recent preprint by Daly and Wade \cite{Daly+Wade:2025}
in which the authors investigate Kakutani’s random interval-splitting process and provide rates of convergence
in the central limit theorem of the longest subinterval after $n$ splits. This can be translated
into a Berry--Esseen-type convergence result for the number of individuals born up to time $t$ in a CMJ process
with a specific (binary) offspring point process.

\section{Setup and main results}\label{sec:setup}

\subsection{General notation}

Throughout the paper, we will denote the set of strictly positive natural numbers by $\N$
and write $\N_0 \coloneqq \N \cup \{0\}$. For $n \in \N$, we write $[n] \coloneqq \{1,\dots,n\}$.

For $\mathbb{K} = \R,\C$ we denote by $\mathbb{K}^n$ and $\mathbb{K}^{1\times n}$
the spaces of $n$-dimensional column and row vectors, respectively.
For any $n\in\N$ and $k \in [n]$, the $k$th standard base column vector of $\mathbb{K}^{n}$ is denoted by $\be_{k,n}$.
In most cases we shall write $\be_k$ with the dimension being implicit.
For any vector $x$, we denote the Euclidean norm of $x$ by $\|x\|$.

By $\M_{n\times m}(\mathbb{K})$ we denote the space of $n \times m$ matrices over $\mathbb{K}$. Every $\M_{n\times n}(\mathbb{K})$ is equipped with the operator norm $\| \cdot \|$ induced by the Euclidean norm.
Observe that $\|\cdot\|$ is dominated by the Hilbert--Schmidt norm, denoted by $\|\cdot\|_{\mathrm{HS}}$.

Unless otherwise stated, for a matrix-valued function $f$ and $\gamma\in\N$,
we will denote by $f^\gamma$ the matrix-valued function with $(i,j)$-th entry given by $(\be_i^\trans f(t) \be_j)^\gamma$,
that is, we apply the power entry-wise.
Accordingly, for a matrix-valued random function $\varphi$,
we denote the matrix with entries $\E[\be_i^\trans \varphi(t) \be_j]$ by $\E[\varphi](t)$
and the matrix with entries $\E[|\be_i^\trans \varphi(t) \be_j - \E[\be_i^\trans \varphi(t)\be_j]|^2]$ by $\Var[\varphi](t)$.

\subsection{Multi-type general branching process}\label{sec:CMJ definition}
In this section we formally introduce the multi-type Crump-Mode-Jagers (CMJ) branching process counted with random characteristic.
We set the type space to be $[p] = \{1,\dots,p\}$ for some fixed $p \in \N$. The process starts with a single individual, the ancestor, born at time $0$. The type of the ancestor, denoted $\tau(\varnothing)$, may be chosen randomly according to any probability distribution on $[p]$. The ancestor produces offspring according to the \emph{reproduction (point) process}
$$\bxi = (\xi^{ij})_{i,j\in[p]} = \begin{pmatrix}
	\xi^{11} & \xi^{12} & \dots & \xi^{1p} \\
	\vdots &&&\vdots \\
	\xi^{p1} & \xi^{p2} & \dots & \xi^{pp}
\end{pmatrix},$$
where for every $i,j\in[p]$,
$$	\xi^{ij} = \sum_{k=1}^{N^{ij}} \delta_{X^{ij}_k}	$$
is a point process on $[0,\infty)$. Here, the $N^{ij} = \xi^{ij}([0,\infty))$, $i,j \in [p]$ are random variables taking values in $\N_0 \cup \{\infty\}$.
Moreover, $\bxi$ is assumed to be independent of $\tau(\varnothing)$.
If the ancestor's type is $i$, then its $k$th child of type $j$ is born at time $X^{ij}_k$ for $k=1,\ldots,N^{ij}$.
Let $N^i \coloneqq \sum_{j=1}^p N^{ij}$ and
\begin{equation}\label{eq:repr-process-flat}
\xi^{i} \coloneqq \sum_{j=1}^p \sum_{k=1}^{N^{ij}} \delta_j \otimes \delta_{X^{ij}_k}
\eqqcolon \sum_{k=1}^{N^i} \delta_{(\tau^i_k, X^{i}_k)}
\end{equation}
for the process encoding all birth times and types of the children of the ancestor in the case $\tau(\varnothing) = i$.
We assume without loss of generality that $0 \leq X^i_1 \leq X^i_2 \leq \ldots$
For completeness, we write $X^i_k = \infty$ when $k > N^i$.
The children of the ancestor form the first generation of the process.
Each of them then reproduces according to an independent
copy of $\bxi$, and so on.

Formally, the individuals are indexed by the Ulam-Harris tree $\UHTree = \bigcup_{n\in\N_0} \N^n$,
with $\N^0 = \{\varnothing\}$ being the singleton that contains only the ancestor.
We use the abbreviation $u = u_1 \dots u_n$ for a tuple $u = (u_1, \dots, u_n) \in \N^n$ and call $|u| \coloneqq n$
the \emph{generation} of $u$.
In this context, $u_1\dots u_n$ encodes the genealogy of $u$: $u_1$ is the $u_1$th child of the ancestor,
$u_1 u_2$ is the $u_2$th child of $u_1$, etc.
For two tuples $u = u_1\dots u_n$ and $v = v_1\dots v_m$,
we write $uv$ for the concatenation $u_1\dots u_n v_1\dots v_m$.
More generally, $u$ is a \emph{descendant} of $v$ (denoted $v \preceq u$), if $u = vw$ for some $w \in \UHTree$.
The individual $v$ is then a \emph{progenitor} of $u$.
We write $v \prec u$ if $v \preceq u$ and $v \neq u$,
and call $v$ a \emph{strict} progenitor of $u$.

Let $(\Omega_u, \A_u, \mathrm{P}_u)$ be a probability space,
which we call the \emph{life space} of the individual $u$, $u \in \UHTree$.
We assume that the $(\Omega_u, \A_u, \mathrm{P}_u)$, $u \in \UHTree$
are all identical
and that $\bxi$ is defined on $(\Omega_\varnothing,\A_\varnothing,\mathrm{P}_\varnothing)$.
Define, for $i \in [p]$,
\begin{equation*}
(\Omega, \F, \P^i)
= \bigg([p] \times \bigtimes_{u \in \UHTree} \Omega_u, \Pow([p]) \otimes \bigotimes_{u \in \UHTree} \A_u,
\delta_i \otimes \bigotimes_{u \in [p]}\mathrm{P}_u\bigg)
\end{equation*}
where $\Pow([p])$ is the power set of $[p]$.
We adopt the convention that $\P$ denotes an arbitrary convex combination of the measures $\P^i$,
and that $\E$ denotes the corresponding expectation.
For $u \in \UHTree$, we denote by $\pi_u: \Omega \to \Omega_u$ the projection from $\Omega$ onto $\Omega_u$
and write $\bxi_u$ for $\bxi \circ \pi_u$.
Somewhat abusively, we also write $\bxi$ for $\bxi_\varnothing$ when no confusion can arise.

Quantities derived from $\bxi_u$ are denoted with a subscript $u$.
In particular, for each $i\in[p]$,
\begin{equation}	\label{eq:repr-process-flat u}
\xi^{i}_u = \sum_{k=1}^{N^i} \delta_{(\tau^i_{uk}, X^{i}_{uk})}
\end{equation} 
 is a copy of the process given in~\eqref{eq:repr-process-flat}
encoding types and birth times of the children of $u$ if $u$'s type is $i$.
We denote by $\tau(u)$ and $S(u)$ \emph{type} and the \emph{birth time} of $u$, respectively.
These are defined as follows: $\tau(\varnothing)$ is the projection from $\Omega$ onto $[p]$
and $S(\varnothing) = 0$. Furthermore, for $u \in \UHTree$ and $k \in \N$, recursively,
\begin{equation*}
	\tau(uk) \coloneqq \tau^{\tau(u)}_{uk}
\quad	\text{and}	\quad
	S(uk) \coloneqq S(u) + X^{\tau(u)}_{uk}.
\end{equation*}
We write $\Tree = \{u \in \UHTree: S(u)<\infty\}$ for the random subtree consisting of all individuals who are ever born.

Finally, assume there is a vector-valued function $\varphi: \Omega_\varnothing \times \R \to \R^p$,
where it should be recalled that $\R^p$ is considered as the space of $p$-dimensional column vectors
with canonical basis $\be_i$, $i \in [p]$.
We consider characteristics with c\`adl\`ag paths,
i.e., for every $i\in[p]$, the $i$th coordinate of $\varphi$ given by $t \mapsto \be_i^\trans \varphi(t)$
has right-continuous paths with existing left limits.
We write $\varphi_u \coloneqq \varphi \circ \pi_u$, $u \in \UHTree$.
As before, we permit the notation $\varphi = \varphi_\varnothing$ whenever the meaning is clear from context.
In general, $\bxi_u$ and $\varphi_u$ may be dependent.
However, since the type of the individual depends only on its strict progenitors, $\tau(u)$ is independent of $(\bxi_u, \varphi_u)$.
The Crump--Mode--Jagers process counted with characteristic $\varphi$ is the process $(\cZ_t^\varphi)_{t\geq 0}$ given by
\begin{equation}\label{eq:def CMJ}
	\cZ_t^\varphi = \sum_{u\in\Tree} \be_{\tau(u)}^\trans \varphi_u(t - S(u)).
\end{equation}
Whenever considering a random variable $X$ that depends on the type of the ancestor, we write $\Ev[X]$ for the vector in $\R^p$ (or $\C^p$)
whose $i$th entry is given by $\E^i[X]$ provided it is well-defined.
Similarly, if $X$ is a row vector of random variables, $\Ev[X]$ is a matrix whose $i$th row is $\E^i[X]$, $i\in[p]$.
In particular, the mean of $\cZ_t^\varphi$, denoted $m_t^\varphi = \Ev[\cZ_t^\varphi]$,
is the vector whose $i$th entry is given by $\E^i[\cZ_t^\varphi]$.
Similarly, $\Varv[\cZ_t^\varphi] \in \R^p$ is the column vector
with entries $\Var^i[\cZ_t^\varphi] \coloneqq \E^i[|\cZ_t^\varphi - \E^i[\cZ_t^\varphi]|^2]$.
Note that, since the characteristic $\varphi$ and the reproduction process $\bxi$ do not depend on the ancestor's type,
we keep the notation $\E[\varphi], \E[\bxi]$ for their (vector- or matrix-valued) expectations.

Sometimes it is convenient to consider CMJ processes counted with matrix-valued characteristics. Since their definition uses the Kronecker product, we postpone it, as well as the definition of $\Ev$ in this setting, until Section~\ref{sec:matrices}.

\begin{example}	\label{Exa:model with lifespan}
To illustrate the purpose of introducing random characteristics which may depend on $\bxi$,
consider a model in which the ancestor has a random lifespan $\zeta$.
The number of individuals alive at time $t$ is then given by
	\begin{equation*}
		\sum_{u \in \Tree} \1_{\{S(u) \leq t < S(u) + \zeta_u\}} = \sum_{u \in \Tree} \be_{\tau(u)}^\trans \varphi_u(t - S(u))
	\end{equation*}
for $\varphi(t) = \sum_{i=1}^p \be_i \1_{[0,\zeta)}(t)$.
One may also weight the particles alive depending on their type
by considering a characteristic $\varphi(t) = \bw \1_{[0,\zeta)}(t)$ for some vector $\bw \in \R^p$.
We refer the reader to Section~\ref{sec:applications} for more examples.
\end{example}

\begin{rem}
In general, the infinite sum in~\eqref{eq:def CMJ} need not be well defined.
In Section~\ref{sec:existence}, we provide several conditions in terms of $\bxi$ and $\varphi$
which are sufficient for the existence of the process $(\cZ_t^\varphi)_{t\geq 0}$.
\end{rem}

\subsection{Assumptions}	\label{subsec:assumptions}
	
In this section we present the assumptions under which our main result will be stated.
They fall into two categories: assumptions on the reproduction process $\bxi$
and assumptions on the characteristic $\varphi$.

To state the former, denote by $\bmu$ the \emph{intensity measure} of $\bxi$, i.e., the matrix of measures given by
\begin{equation*}
	\bmu = (\mu^{ij})_{i,j \in [p]} \coloneqq (\E[\xi^{ij}])_{i,j\in[p]} = \E[\bxi].
\end{equation*}
We call $\bmu$ $d$-lattice if $d > 0$ is the maximal real number
for which there exists a measurable \emph{shift function} $\gamma : [p] \to [0,d)$, $i \mapsto \gamma_i$,
such that for each $i,j\in[p]$,
the measure $\mu^{ij}$ is concentrated on $d \Z + \gamma_i - \gamma_j$.
If no such $d$ exists, then we call $\bmu$ non-lattice.

Throughout the paper, we assume that the intensity measure $\bmu$ is either non-lattice or $1$-lattice with zero shift.
In the non-lattice case we set $\G \coloneqq \R$ and denote by $\ell$ the Lebesgue measure on $\R$,
while in the lattice case, we set $\G \coloneqq \Z$ and $\ell$ denotes the counting measure on $\Z$.

Further, let $\Lap\bmu$ be the Laplace transform of $\bmu$, applied entry-wise, that is,
\begin{equation*}
	\Lap\bmu(z) = \int e^{-zx} \bmu(\dd x) = \begin{pmatrix}
		\int e^{-zx} \mu^{11}(\dd x) & \int e^{-zx} \mu^{12}(\dd x) & \dots & \int e^{-zx} \mu^{1p}(\dd x) \\
		\vdots &&&\vdots \\
		\int e^{-zx} \mu^{p1}(\dd x) & \int e^{-zx} \mu^{p2}(\dd x) & \dots & \int e^{-zx} \mu^{pp}(\dd x)
	\end{pmatrix}.
\end{equation*}
Recall that a nonnegative matrix is called \emph{irreducible} if it has a power that has positive (possibly infinite) entries only.

Our second standing assumption is that $\Lap\bmu(0)$ is irreducible.

The following assumption is essential for the law of large numbers and, subsequently, also for the central limit theorem.
\begin{enumerate}[label={\bf(A\arabic*)}, resume]
	\item
		There exists the Malthusian parameter $\alpha>0$, i.e., there is an $\alpha > 0$
		such that $\Lap\bmu(\alpha)$ has finite entries only and $1$ is the dominating (Perron--Frobenius) eigenvalue of $\Lap\bmu(\alpha)$.
		Moreover, $-(\Lap\bmu)'(\alpha)\coloneqq \int x e^{-\alpha x} \bmu(\dd x)$ is a finite nonnegative matrix
		with at least one strictly positive entry. \label{assumpt:Malthusian alpha}
\end{enumerate}
Notice that~\ref{assumpt:Malthusian alpha} implies supercriticality and, consequently, also a positive survival probability for the process.
More precisely, if $\Lap\bmu(\alpha)$ has finite entries only, then so does $\Lap\bmu(\theta)$ for every $\theta > \alpha$.
Since $(\Lap\bmu)'(\alpha)$ has a strictly negative entry, the associated Laplace transform, $\Lap \mu^{ij}$ say,
is strictly decreasing in a right neighborhood of $\alpha$, which means that $\mu^{ij}((0,\infty))>0$ and, consequently,
$\Lap \mu^{ij}$ is strictly decreasing on $\{\theta \in [0,\infty): \Lap \mu^{ij}(\theta) < \infty\}$.
This means that either $\Lap\mu(0)$ has an infinite entry, or has finite entries only and Perron-Frobenius eigenvalue $\rho > 1$.
Theorem II.7.1 in \cite{Harris:1963} yields that the survival probability $\P^i(\Surv) > 0$
for every $i \in [p]$ where
\begin{equation}	\label{eq:survival set}
\Surv \coloneqq \{\#\{u \in \Tree: |u|=n\} \to \infty \text{ as } n \to \infty\}.
\end{equation}
Note that in our framework the extinction--explosion dichotomy is implicit, so that, almost surely,
survival occurs only on the explosion set.

Under assumption~\ref{assumpt:Malthusian alpha},
let $\bu^\trans = (\bu_1,\ldots,\bu_p) \in \R^{1 \times p}$ and $\bv = (\bv_1,\ldots,\bv_p)^\trans \in \R^p$
denote left and right eigenvectors of $\Lap\bmu(\alpha)$, respectively.
By the Perron--Frobenius theorem, we can choose $\bu, \bv$ with all entries strictly positive.
By an appropriate rescaling, we can arrange that
\begin{equation}\label{eq:u-v}
	\bu^\trans \bv = 1 = \| \bv \|_1
\end{equation}
where $\|\cdot\|_1$ denotes the $1$-norm on $\R^p$.
Define
\begin{equation}\label{def:beta}
	\beta = \bu^\trans (-(\Lap\bmu)'(\alpha)) \bv.
\end{equation}
Notice that $\beta > 0$ since $\bu$ and $\bv$ have positive entries only and $-(\Lap\bmu)'(\alpha)$ is nonnegative
with at least one strictly positive entry.
The next assumption implies that the Laplace transform of $\bmu$ is finite in the half-plane $\{z \in \C : \Re(z) \geq \vartheta\}$
for some $\vartheta < \alpha/2$.
\begin{enumerate}[label={\bf(A\arabic*)}, resume]
	\item There exists $\vartheta \in (0,\alpha/2)$ such that $\Lap\bmu(\vartheta)$ has finite entries only. \label{assumpt:LaplaceDomain}
\end{enumerate}
Moreover, the asymptotic behavior of the general branching process is closely connected to the roots of the equation
\begin{equation}	\label{eq:LapChar}
	\det(\Ip - \Lap\bmu(z)) = 0
\end{equation}
where here and throughout, $\Ip$ denotes the $p \times p$ identity matrix.
With assumptions~\ref{assumpt:Malthusian alpha} and~\ref{assumpt:LaplaceDomain} in force,
in the non-lattice case, let $\Lambda \coloneqq \{z \in \C: z \text{ solves }~\eqref{eq:LapChar} \text{ and } \Re(z) \geq \frac\alpha2\}$.
In the lattice case, let
$\Lambda \coloneqq \{z \in \C: z \text{ solves }~\eqref{eq:LapChar},\; \Re(z) \geq \frac\alpha2 \text{ and }- \pi < \Im(z) \leq \pi\}$.
In both cases, denote $\dLambda = \{\lambda \in \Lambda : \Re(\lambda) = \frac\alpha2\}$.
We will refer to the set $\dLambda$ as the \emph{critical line}.
Throughout the paper, we will assume that
\begin{enumerate}[resume, label={\bf(A\arabic*)}]
	\item \label{assumpt:LambdaFin}
	$\Lambda$ is a finite set.
\end{enumerate}
Next, we state a moment assumption on the reproduction process.
Each root of the equation~\eqref{eq:LapChar} is a pole of the meromorphic matrix-valued function $z \mapsto (\Ip - \Lap\bmu(z))^{-1}$, see Section~\ref{sec:mean-and-mtgs} for details.
Let $k^*$ denote the maximum of the orders of poles that lie on the critical line.
If no such pole exists, set $k^*=1/2$.
\begin{enumerate}[resume, label={\bf(A\arabic*)}]
	\item For every $i,j \in [p]$, the random variable \label{assumpt:hreprMoments}
	\begin{equation*}
		\int (1 + t)^{k^*-\frac12} e^{-\frac\alpha2t} \, \xi^{ij}(\dd t)
		= \sum_{k=1}^{N^{ij}} \big(1 + X^{ij}_k\big)^{\!k^*-\frac12} e^{-\frac\alpha2 X^{ij}_k}
	\end{equation*} 
	has finite second moment.
\end{enumerate}

We now turn to assumptions concerning the random characteristic $\varphi$.
We say that a matrix-valued function $f = (f_{ij})_{i\in[n], j\in[k]} : \R \to \M_{n\times k}(\C)$ is directly Riemann integrable
if each of its entries $f_{ij}$ is directly Riemann integrable.
For the definition and a discussion of the basic properties of direct Riemann integrability, we refer to \cite[Section 3.10.1]{Resnick:1992}.
\begin{enumerate}[resume, label={\bf(A\arabic*)}]
	\item \label{assumpt:dRimean}
	$\varphi(t) \in L^1(\P)$ for any $t\in \R$ and $t \mapsto e^{-\alpha t} \E[\varphi](t)$
	is directly Riemann integrable.
	\item \label{assumpt:dRivar} 
	$\varphi(t) \in L^2(\P)$ for any $t\in \R$ and $t \mapsto e^{-\alpha t} \Var[\varphi](t)$
	is directly Riemann integrable.
	\item	\label{assumpt:local ui}
	For any $t \in \R$ there exists an $\varepsilon > 0$ such that
	the family $(|\varphi(x)|^2)_{|x-t| \leq \varepsilon}$ is uniformly integrable
	with respect to $\P$.
\end{enumerate}
We note that~\ref{assumpt:dRimean} is needed for the law of large numbers,
which serves as a prerequisite for the central limit theorem, whereas
\ref{assumpt:dRivar} is required for the central limit theorem.
For later reference, also note that~\ref{assumpt:local ui} is needed to ensure that condition~\ref{assumpt:dRivar}
is closed under linear combinations. More precisely, if $\varphi$ and $\psi$ are characteristics satisfying
\ref{assumpt:dRivar} and~\ref{assumpt:local ui}, then every linear combination $\alpha \varphi + \beta \psi$, where $\alpha,\beta \in \C$,
also satisfies~\ref{assumpt:dRivar} and~\ref{assumpt:local ui}.

Proposition 2.6 in \cite{Iksanov+al:2024} provides sufficient conditions for~\ref{assumpt:dRimean} through~\ref{assumpt:local ui} to hold
in the single-type case.
Next, we state the corresponding multi-type version.
To this end, for a function $f:\R^p \to \R$, $f=(f_1,\ldots,f_p)^\trans$ let $f^* = (f_1^*,\ldots,f_p^*)^\trans$
where $f_i^*(t) \coloneqq \sup_{|x-t| \leq 1} |f_i(x)|$, $i \in [p]$.

\begin{prop}	\label{prop:char}
	\begin{enumerate}[\normalfont(i), wide] 
		\item If $f : \R \to \R$ is c\`adl\`ag and $\int f^*(x) \, \dd x < \infty$,
		then $f$ is directly Riemann integrable. Conversely, if $f: \R \to \R$ is directly Riemann integrable, then so is $f^*$.
		\item If~\ref{assumpt:Malthusian alpha} holds and if, for a random characteristic $\varphi$, the matrix
		\begin{equation*}
		\int \E[\varphi^*](x) e^{-\alpha x} \, \dd x
		\end{equation*}
		has finite entries only, then~\ref{assumpt:dRimean} holds for $\varphi$.
		\item \label{item:dRivar conds} If~\ref{assumpt:Malthusian alpha} holds and if, for a random characteristic $\varphi$, the matrix
		\begin{equation*}
		\int \E[(\varphi^*)^2](x) e^{-\alpha x} \, \dd x
		\end{equation*}
		has finite entries only, then~\ref{assumpt:dRivar} and~\ref{assumpt:local ui} hold for $\varphi$.
		Moreover, if the characteristic $\varphi$ is supported on $[0,\infty)$  then~\ref{assumpt:dRimean} holds as well.
	\end{enumerate} 
\end{prop}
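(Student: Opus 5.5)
The plan is to reduce everything to the single-type statement, Proposition 2.6 in \cite{Iksanov+al:2024}, by working coordinate-wise. Direct Riemann integrability of a vector- or matrix-valued function is defined entry-wise. The conditions \ref{assumpt:dRimean}--\ref{assumpt:local ui} also only concern the individual entries $\be_i^\trans\varphi$, $i\in[p]$, or their moments. So it suffices to verify each assertion for one scalar cadlag function or one scalar random characteristic $\be_i^\trans\varphi$ at a time.

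For part (i) I would argue directly. Given $h>0$, let $\overline{\sigma}(h)=h\sum_{n}\sup_{[nh,(n+1)h)}|f|$. For $h\le 1$, every $x\in[nh,(n+1)h)$ lies within distance $1$ of each $t\in[(n-1)h,nh)$, so $\sup_{[nh,(n+1)h)}|f|\le \inf_{[(n-1)h,nh)} f^*$. Summing gives $\overline{\sigma}(h)\le\int f^*<\infty$. Hence $f$ is bounded and $f^*$ is integrable. Since $f$ is cadlag, it is continuous Lebesgue-a.e. and hence locally Riemann integrable. The upper and lower Riemann sums then converge to each other by dominated convergence, with the bound above controlling the tails uniformly in $h\le1$.

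For the converse in (i), let $f$ be directly Riemann integrable. Then $f^*(t)\le \sum_{|n h - t|\le 1+h}\sup_{[nh,(n+1)h)}|f|$, which bounds the upper Riemann sum of $f^*$ by a constant multiple of $\overline{\sigma}(h)$ for $|f|$, so it is finite. Moreover $f^*$ is lower semicontinuous and is continuous at every $t$ where $f$ is continuous at $t\pm1$. Hence $f^*$ is a.e. continuous, and a bounded a.e. continuous function with finite upper Riemann sum is directly Riemann integrable.

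For (ii), fix $i$ and put $g(x)=e^{-\alpha x}\E[\be_i^\trans\varphi](x)$. Then $|g|^*(t)\le e^{\alpha}e^{-\alpha t}\E[(\be_i^\trans\varphi)^*](t)$, so $\int |g|^*<\infty$ by hypothesis. The function $g$ is cadlag: this follows by dominated convergence using $\E[\varphi^*]<\infty$ a.e., which the integrability gives. Part (i) then applies, and integrability of $\varphi(t)$ for each $t$ follows since $\E[\varphi^*]$ is finite on a dense set and $\varphi^*$ dominates on unit windows.

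For (iii), the same argument applied to $e^{-\alpha x}\Var[\be_i^\trans\varphi](x)\le e^{-\alpha x}\E[((\be_i^\trans\varphi)^*)^2](x)$ gives \ref{assumpt:dRivar}. Uniform integrability in \ref{assumpt:local ui} follows because $|\varphi(x)|^2\le (\varphi^*(t))^2$ for $|x-t|\le 1$, and $\E[(\varphi^*)^2(t')]<\infty$ for some $t'$ near $t$, using the window $|x-t'|\le1$ containing a neighbourhood of $t$.

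The last claim uses support on $[0,\infty)$: there $e^{-\alpha x}\le1$, and by Cauchy--Schwarz
\[
\int_{-1}^\infty e^{-\alpha x}\E[\varphi^*](x)\,\dd x\le \Big(\int e^{-\alpha x}\E[(\varphi^*)^2]\,\dd x\Big)^{1/2}\Big(\int_{-1}^\infty e^{-\alpha x}\,\dd x\Big)^{1/2}<\infty.
\]
Note that $\varphi^*$ vanishes below $-1$. This reduces the claim to (ii).

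The main obstacle I expect is the regularity bookkeeping. One must show that $\E[\varphi](\cdot)$ is cadlag, which needs the integrability of $\varphi^*$ near every point, not just almost every point. One must also check the a.e. continuity of $f^*$ in the converse of (i). I would handle the first by noting that $\E[\varphi^*](t)<\infty$ for some $t$ in every interval, and that $\varphi^*(t)$ dominates $\varphi$ on $[t-1,t+1]$.
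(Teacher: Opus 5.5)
Your proposal is correct and follows the paper's route: an entrywise application of the single-type Proposition 2.6 of \cite{Iksanov+al:2024}, with Cauchy--Schwarz (and $\varphi^*=0$ below $-1$) giving the half-line claim via (ii). Your self-contained re-derivation of the scalar facts goes beyond what the paper does and has two small slips. The distance claim in (i) needs $h\le 1/2$ rather than $h\le 1$, and $f^*$ defined with closed windows need not be lower semicontinuous; neither slip is used in an essential way.
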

\begin{proof}
This follows from an entry-wise application of \cite[Proposition 2.6]{Iksanov+al:2024}.
Under the assumption \ref{item:dRivar conds}, if $\varphi$ is supported on the half-line, then the finiteness of $	\int \E[\varphi^*](x) e^{-\alpha x} \, \dd x$ follows from the Cauchy--Schwartz inequality.
\end{proof}

\begin{rem}\label{rem:charAssumpts}
Note that for a matrix-valued characteristic $\varphi$, each entry of $\varphi^2(t)$ is bounded by $\|\varphi(t)\|_{\mathrm{HS}}^2$,
where $\|\cdot\|_{\mathrm{HS}}$ denotes the Hilbert--Schmidt norm. In particular, if
\begin{equation*}
\int e^{-\alpha x} \E\Big[\sup_{|x-t|\leq1} \|\varphi(t)\|_{\mathrm{HS}}^2\Big] \, \dd x < \infty,
\end{equation*}
then, by Proposition~\ref{prop:char}\ref{item:dRivar conds}, the characteristic $\varphi$ satisfies~\ref{assumpt:dRivar}
and~\ref{assumpt:local ui}. If it is supported on the positive half-line, then~\ref{assumpt:dRimean} holds as well.
\end{rem}

\subsection{Main results: preliminaries}

In order to state our main theorem,
we recall some known results concerning the mean of $\cZ_t^\varphi$
and certain martingales connected to it.
We refer the reader to Section~\ref{sec:mean-and-mtgs} for more details.

For any $\lambda \in \Lambda$, the meromorphic function $z \mapsto (\Ip-\Lap\bmu(z))^{-1}$ may be represented in some neighborhood of $\lambda$ in the form
\begin{equation*}
(\Ip - \Lap\bmu(z))^{-1} = \sum_{j = 1}^{k_\lambda}(z-\lambda)^{-j} A_{\lambda, j} + H_\lambda(z),
\end{equation*}
where $k_\lambda \in \N$ does not exceed the multiplicity of $\lambda$ as a root of~\eqref{eq:LapChar},
$A_{\lambda,j}$ are $p\times p$ matrices over $\C$ and $H_\lambda$ is matrix-valued,
the entries being holomorphic functions.
Moreover, for any $j \in[k_\lambda]$,
\begin{equation}\label{eq:matrixRec}
A_{\lambda,j} = \sum_{l=j}^{k_\lambda} \frac{1}{(l-j)!} (\Lap\bmu)^{(l-j)}(\lambda) A_{\lambda,l} = \sum_{l=j}^{k_\lambda} \frac1{(l-j)!}A_{\lambda,l} (\Lap\bmu)^{(l-j)}(\lambda),
\end{equation}
where $(\Lap\bmu)^{(n)}$ denotes the $n$th derivative of $\Lap\bmu$, applied entry-wise.
It follows from \cite{Kolesko+al:2025} that in the given situation, if~\ref{assumpt:Malthusian alpha} through~\ref{assumpt:hreprMoments}
are in force and subject to additional assumptions on the characteristic $\varphi$,
the mean of the process $(\cZ_t^\varphi)_{t\geq 0}$ satisfies the asymptotic relation
\begin{equation}	\label{eq:meanAs1}
m_t^\varphi
= \sum_{\lambda \in \Lambda} e^{\lambda t} \sum_{k=0}^{k_\lambda-1} A_{\lambda,k+1} \int_\G \E[\varphi](s)\frac{(t-s)^{k}}{k!} e^{-\lambda s}
\, \ell(\dd s) + r(t),
\end{equation}
where
\begin{equation}	\label{eq:rasympt}
r(t) = O\big(e^{\frac\alpha2t}/(1+t^2) \big) \quad \text{as } t\to\infty,\ t\in\G.
\end{equation}
Note that, regrouping the terms,~\eqref{eq:meanAs1} may be rewritten as
\begin{equation}\label{eq:meanAsymptotics-vecs}
m_t^\varphi = \sum_{\lambda\in\Lambda} e^{\lambda t}\sum_{j=0}^{k_\lambda-1} t^j \bb_{\lambda,j,\varphi} + r(t)
\end{equation}
for vectors $\bb_{\lambda,j,\varphi} \in \C^p$ given by
\begin{equation}\label{def:binomialVecs}
\bb_{\lambda,j,\varphi} = \frac1{j!}\sum_{k=j}^{k_\lambda-1} \frac{1}{(k-j)!} \int_\G (-s)^{k-j} e^{-\lambda s} A_{\lambda,k+1} \E[\varphi](s) \, \ell(\dd s).
\end{equation}
For $t\geq 0$, let
\begin{equation}\label{def:comingGen}
	\cC_t = \{ uj \in \Tree : S(u) \leq t < S(uj) \}
\end{equation}
be the \emph{coming generation at time $t$}. For $\lambda \in \intLambda$ and $i \in [k_\lambda]$, put
\begin{equation*}
	W_t(\lambda,i) = \sum_{u\in\cC_t} e^{-\lambda S(u)} \sum_{j=i}^{k_\lambda} \frac{(-S(u))^{j-i}}{(j-i)!} \be_{\tau(u)}^\trans A_{\lambda,j} 
\end{equation*}
and let $W_t(\lambda)$ be the $k_\lambda \times p$ block matrix defined via
\begin{equation*}
	W_t(\lambda) = \begin{pmatrix}
	W_t(\lambda,1) \\
	W_t(\lambda,2) \\
	\vdots \\
	W_t(\lambda,k_\lambda)
	\end{pmatrix}.
\end{equation*}
By \cite[Theorem 2.1]{Kolesko+al:2025b}, the process $(W_t(\lambda))_{t \geq 0}$ is an entry-wise complex-valued martingale.
Moreover, if~\ref{assumpt:hreprMoments} holds, Theorem 2.3 in \cite{Kolesko+al:2025b}
(and, alternatively, Lemma~\ref{lem:mtgsasCMJ} in the present paper) applies
and yields that $(W_t(\lambda))_{t \geq 0}$ converges almost surely and in $L^2$ to a random matrix
which we denote by $W(\lambda)$. In particular, for each $i \in [k_\lambda]$,
$W_t(\lambda,i) \to W(\lambda,i)$ almost surely as $t\to\infty$, where $W(\lambda,i)$ is the $i$th row of $W(\lambda)$.
Moreover, $\Ev[W(\lambda,j)] = A_{\lambda,j}$ with the convention introduced shortly after equation~\eqref{eq:def CMJ},
i.\,e.\ $\Ev[W(\lambda,j)]$ is the $p\times p$ matrix with $i$th row given by $\E^i[W(\lambda,j)]$.

The martingale connected to $\lambda = \alpha$ is of special importance. Let, for $t \geq 0$,
\begin{equation}\label{def:Nermanmtg}
	W_t = \sum_{u \in \cC_t} \bv_{\tau(u)} e^{-\alpha S(u)},
\end{equation}
where $\bv_i$ is the $i$th entry of $\bv$, the normed right eigenvector of $\Lap\bmu(\alpha)$.
It is known that $(W_t)_{t\geq 0}$ is a positive martingale, called \emph{Nerman's martingale},
and thus converges almost surely to a limit $W$. Nerman's martingale gives the first-order asymptotics of the Crump--Mode--Jagers process. The following result in the non-lattice case may be found e.\,g.\ in \cite{Iksanov+Meiners:2015}. It can be easily checked that the same reasoning can be applied to the lattice case.
\begin{prop}
	\label{thm:asconv}
	Assume~\ref{assumpt:Malthusian alpha} through~\ref{assumpt:hreprMoments} hold and let $\varphi$ be a characteristic with c\`adl\`ag paths satisfying~\ref{assumpt:dRimean}. Then
\begin{equation*}
e^{-\alpha t} \cZ^{\varphi}_t \xrightarrow[t\in\G]{t\to\infty} \frac{W}\beta \int_\R e^{-\alpha s} \bu^\trans \E[\varphi](s) \, \ell(\dd s) \qquad \text{in }\P.
\end{equation*}
\end{prop}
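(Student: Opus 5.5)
We follow Nerman's approach: first truncate the characteristic, then compare $\cZ_t^\varphi$ with a conditional expectation given the coming generation at an earlier time $s<t$, and finally use the convergence of Nerman's martingale $W_s\to W$. Since the convergence is only claimed in probability, no strong version of the renewal argument is needed. Fix $t\in\G$ and $s=s(t)\to\infty$ with $t-s\to\infty$, say $s=t/2$ rounded into $\G$.

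\textbf{Decomposition at time $s$.} Every $u\in\Tree$ either satisfies $S(u)\le s$ or descends from a unique $v\in\cC_s$. Hence
\begin{equation*}
\cZ_t^\varphi=\sum_{u\in\Tree,\,S(u)\le s}\be_{\tau(u)}^\trans\varphi_u(t-S(u))+\sum_{v\in\cC_s}\cZ_{t-S(v)}^{\varphi,(v)},
\end{equation*}
where the $\cZ^{\varphi,(v)}$ are the processes rooted at $v$. Conditionally on $\F_s$, the $\sigma$-field generated by the lives of individuals born up to time $s$ together with the types and birth times of $\cC_s$, these processes are independent copies started from type $\tau(v)$.

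\textbf{Step 1: the first sum is negligible after scaling by $e^{-\alpha t}$.} Its conditional expectation is bounded by the mean of a CMJ process counted with the characteristic $|\varphi|(t-s+\cdot)\1_{[0,\infty)}$, evaluated at time $s$. By the renewal estimate underlying~\eqref{eq:meanAs1}, or simply by the fact that $e^{-\alpha x}\E|\varphi|(x)$ is directly Riemann integrable and hence tends to $0$, this mean is $o(e^{\alpha t})$. Markov's inequality then gives convergence to $0$ in probability.

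\textbf{Step 2: the second sum is approximated by its conditional mean.} By the leading term of~\eqref{eq:meanAs1} (or Proposition~\ref{thm:asconv}'s renewal-theoretic ingredient, i.e.\ Markov renewal theory for $\Lap\bmu(\alpha)$), we have
\begin{equation*}
e^{-\alpha r}m_r^\varphi\to \bv\,\beta^{-1}\int e^{-\alpha x}\bu^\trans\E[\varphi](x)\,\ell(\dd x)=:\bv c
\end{equation*}
as $r\to\infty$. Since $t-S(v)\ge t-s-$ (nothing; only $S(v)>s$ is known), we handle small $t-S(v)$ by bounding $e^{-\alpha r}|m_r^\varphi|\le C$ uniformly in $r$. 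Then
\begin{equation*}
e^{-\alpha t}\E\Big[\sum_{v\in\cC_s}\cZ^{\varphi,(v)}_{t-S(v)}\,\Big|\,\F_s\Big]=\sum_{v\in\cC_s}e^{-\alpha S(v)}\be_{\tau(v)}^\trans e^{-\alpha(t-S(v))}m^\varphi_{t-S(v)}.
\end{equation*}
This equals $cW_s$ plus an error controlled by $\sum_{v\in\cC_s,\,S(v)>s+a}e^{-\alpha S(v)}$, whose expectation is small uniformly in $s$ for large $a$. The latter holds by the standard estimate on the coming generation: its mean solves a renewal equation with the dRi function $e^{-\alpha x}\bmu((x+a,\infty))$-type terms. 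Since $W_s\to W$ a.s., the conditional mean converges to $cW$ in probability.

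\textbf{Step 3: concentration.} This is the main obstacle. We would not use variances, because~\ref{assumpt:dRimean} gives only first moments. Instead we truncate: split $\varphi=\varphi\1_{\{|\varphi|\le K\}}$ plus a remainder, and restrict attention to ages in $[0,K]$. The remainder has small mean contribution, uniformly in $t$, by the dRi property and dominated convergence applied through the same renewal bound. For the bounded part, the conditional variance given $\F_s$ is at most $\sum_{v\in\cC_s}\Var^{\tau(v)}[\cZ^{\psi}_{t-S(v)}]$, where $\psi$ is the truncated characteristic. Using~\ref{assumpt:hreprMoments}, i.e.\ the second moments at exponent $\alpha/2<\alpha$, the second moment of $\cZ^\psi_r$ is $O(e^{2\alpha r})$. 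Hence, after scaling, the conditional variance is $O(\sum_{v\in\cC_s}e^{-2\alpha S(v)})\le O(e^{-\alpha s}W_s\cdot\max\ldots)$. More simply, its expectation is $O(e^{-\alpha s})\to0$, because $\E\sum_{v\in\cC_s}e^{-2\alpha S(v)}\lesssim e^{-\alpha s}$. Chebyshev's inequality then concludes. If the second-moment bound for $\cZ^\psi$ proves delicate, the alternative is Nerman's original route: a weak law of large numbers for triangular arrays of conditionally independent summands with uniformly integrable normalized terms, which needs only first moments and the uniform integrability of $e^{-\alpha r}\cZ^\varphi_r$.
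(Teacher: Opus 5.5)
Your route differs from the paper's. The paper gives no proof of its own: it cites \cite{Iksanov+Meiners:2015}, where the multi-type law of large numbers is obtained by reduction to Nerman's single-type theorem, which needs no second-moment condition. It then only asserts that the lattice case works the same way. You instead argue directly, and the core steps are sound. You truncate $\varphi$ and split along the coming generation $\cC_s$ with $s\approx t/2$. You identify the conditional mean through the Markov renewal theorem and $W_s\to W$; your tail estimate for $\sum_{v\in\cC_s,\,S(v)>s+a}e^{-\alpha S(v)}$ is correct, since $x\mapsto\int_{(x+a,\infty)}e^{-\alpha y}\bmu(\dd y)\bv$ is monotone and integrable. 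You get concentration from conditional Chebyshev: \ref{assumpt:hreprMoments} gives $\E[(\cZ^\psi_r)^2]=O(e^{2\alpha r})$ for bounded $\psi$ with bounded support, so the scaled conditional variance has expectation $O(\E\sum_{v\in\cC_s}e^{-2\alpha S(v)})=O(e^{-\alpha s})$. This is self-contained and treats $\G=\R$ and $\G=\Z$ uniformly; the price is reliance on \ref{assumpt:hreprMoments}, which the cited route avoids. For the renewal input, cite Lemma~\ref{lem:meanviarenewal}, not \eqref{eq:meanAs1}, which is only assumed for the particular $\varphi$ in Theorem~\ref{thm:main}.

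Three steps need repair.

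(1) Characteristics here may live on $(-\infty,0)$; note the $\int_\R$ in the limit. So the age window must be $[-K,K]$, not $[0,K]$. Otherwise the discarded part $\varphi\1_{(-\infty,0)}$ has scaled mean tending to $\frac{\bv_i}{\beta}\int_{(-\infty,0)}e^{-\alpha x}\bu^\trans\E[\varphi](x)\,\ell(\dd x)$ under $\P^i$, regardless of $K$. With $s<t-K$ and $|\cZ^\psi_r|\le K\,\#\{u\in\Tree:S(u)\le r+K\}$, nothing else changes. Your Step~1 also becomes vacuous. As written it was incomplete anyway: ``tends to $0$'' does not suffice, because the renewal mass of $[0,s]$ grows linearly in $s$.

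(2) The truncation remainder is not small uniformly in $t$ via the bound $\sum_k\sup_{[k,k+1)}e^{-\alpha x}\E[(|\varphi(x)|-K)^+]$. That would need local uniform integrability of $\varphi$, which \ref{assumpt:dRimean} does not give. You only need $\lim_K\limsup_t$, and the key renewal theorem gives this once $x\mapsto e^{-\alpha x}\E[(|\varphi(x)|-K)^+]$ is directly Riemann integrable. It is dominated by $e^{-\alpha x}\E|\varphi|(x)$. It is also a.e.\ continuous, as the difference of $\E|\varphi|(x)$ and $\E[|\varphi(x)|\wedge K]$. The first is a.e.\ continuous when $e^{-\alpha x}\E|\varphi|(x)$ is directly Riemann integrable. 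The second is continuous off the countable set of fixed discontinuities of the c\`adl\`ag process, by bounded convergence. Use the c\`adl\`ag truncation $(\varphi\wedge K)\vee(-K)$ rather than $\varphi\1_{\{|\varphi|\le K\}}$.

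(3) Throughout, you use that $e^{-\alpha x}\E|\varphi|(x)$ is directly Riemann integrable. \ref{assumpt:dRimean} gives this only for $\varphi\geq0$. That covers every application of the proposition in the paper. For signed $\varphi$, however, it is an extra hypothesis; without some such condition $\cZ^\varphi_t$ need not even be defined.
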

Furthermore, under assumptions~\ref{assumpt:Malthusian alpha} through~\ref{assumpt:hreprMoments}, $k_\alpha = 1$ and $W(\alpha,1) = \frac1\beta W \bu^\trans$,
in particular, each entry of the random vector $W(\alpha,1)$ is a scalar multiple of $W$, 
see Remark~\ref{rem:NermanviaA} below.

\begin{rem}
In some publications, such as \cite{Iksanov+Meiners:2015},
Nerman's martingale is defined in a slightly different way. Namely, for any $i\in[p]$,
\begin{equation*}
W^{(i)}_t = \sum_{u \in \cC_t} \frac{\bv_{\tau(u)}}{\bv_i} e^{-\alpha S(u)}
\end{equation*}
constitutes a unit-mean martingale under $\P^i$.
The almost sure convergence is then stated in Theorem 2.4 of \cite{Iksanov+Meiners:2015} as follows:
For any $i\in[p]$,
\begin{equation*}
e^{-\alpha t} \cZ^{\varphi}_t
\xrightarrow[t\in \G]{t\to\infty} \frac{\bv_i W^{(i)}}\beta \int_\R e^{-\alpha s} \bu^\trans \E[\varphi](s) \, \ell(\dd s) \qquad \P^i\text{-a.\,s.,}
\end{equation*}
where $W^{(i)}$ is the $\P^i$-almost sure limit of $(W^{(i)}_t)_{t\geq0}$.
However, since this statement
can be unified by avoiding a scaling of the martingale that depends on the type of the ancestor,
we will throughout the paper consider Nerman's martingale as defined in~\eqref{def:Nermanmtg}.	
\end{rem}

\begin{rem}\label{rem:conjugates}
Observe that if $\lambda \in \Lambda$, then also $\overline{\lambda} \in \Lambda$ and $k_\lambda = k_{\overline{\lambda}}$,
where $\overline{\lambda}$ denotes complex conjugate of $\lambda$. It follows from the definition of the matrices $A_{\lambda,j}$ that $A_{\overline{\lambda},j} = \overline{A_{\lambda,j}}$ and thus $\bb_{\overline{\lambda},j,\varphi} = \overline{\bb_{\lambda,j,\varphi}}$
for a real-valued characteristic $\varphi$ such that the integral in~\eqref{def:binomialVecs} converges absolutely.
In particular, the sum in~\eqref{eq:meanAs1} is real-valued. Moreover, the martingale limits satisfy $W(\overline{\lambda},i) = \overline{W(\lambda,i)}$ almost surely for every $i\in[k_\lambda]$.
\end{rem}

\subsection{Statement of the main results}
We are now ready to state our main result. Put
\begin{align*}
H_\Lambda(t)
&= \1_{[0,\infty)}(t) \sum_{\lambda \in \intLambda} e^{\lambda t} \sum_{k=0}^{k_\lambda-1} W(\lambda,k+1) \int_\G \E[\varphi](s)\frac{(t-s)^{k}}{k!} e^{-\lambda s} \, \ell(\dd s), \\
H_\dLambda(t) 
&= \1_{[0,\infty)}(t) \sum_{\lambda \in \dLambda} e^{\lambda t} \sum_{k=0}^{k_\lambda-1} \be_{\tau(\varnothing)}^\trans A_{\lambda,k+1} \int_\G \E[\varphi](s)\frac{(t-s)^{k}}{k!} e^{-\lambda s} \, \ell(\dd s).
\end{align*}
Note that $H_\Lambda, H_\dLambda$ are real-valued by Remark~\ref{rem:conjugates}. Further, recall the vectors $\bb_{\lambda,j,\varphi}$ given by~\eqref{def:binomialVecs}. For $\lambda\in\dLambda$ and $k =0,\dots,k_\lambda-1$, let
\begin{equation*}
Y(\lambda,k) = \sum_{m=0}^{k_\lambda-k-1} \binom{k+m}{k} \int e^{-\lambda x} (-x)^m \, \bxi(\dd x) \bb_{\lambda,k+m,\varphi}.
\end{equation*}
Observe that under~\ref{assumpt:hreprMoments}, each of the vectors $\Var[Y(\lambda,k)]$ has finite entries only. Let
\begin{equation*}
	n = \max\{k: \be_i^\trans \Var[Y(\lambda,k)] \neq 0 \text{ for some } \lambda\in\dLambda, i\in[p]\}
\end{equation*}
and put $n=-1$ if such $k$ does not exist. In the case $n\geq0$, let
\begin{equation*}
	\rho_n^2 = \sum_{\lambda\in\dLambda} \bu^\trans \Var[Y(\lambda,n)].
\end{equation*}
Note that $\rho_n^2>0$ since all entries of $\bu$ are strictly positive.

\begin{thm}\label{thm:main}
Assume~\ref{assumpt:Malthusian alpha} through~\ref{assumpt:hreprMoments} hold,
and let $\varphi$ be characteristic with c\`adl\`ag paths satisfying~\ref{assumpt:dRimean} through~\ref{assumpt:local ui}
for which~\eqref{eq:meanAs1} holds. Let $\mathcal{N}$ be a standard normal variable independent of Nerman's martingale limit $W$.
	\begin{enumerate}[\normalfont(i), wide]
		\item If $n=-1$, then
		\begin{equation*}
			e^{-\frac\alpha2t} \big(\cZ_t^\varphi - H_\Lambda(t) - H_\dLambda(t) \big) \tod \sigma \Big(\frac{W}\beta\Big)^{\!1/2} \mathcal{N}
			\quad	\text{as } t \to \infty
		\end{equation*}
		where
		\begin{equation*}
			\sigma^2 = \int_\G e^{-\alpha t} \bu^\trans \Var[\bxi*f^\varphi + \varphi](t) \, \ell(\dd t)
		\end{equation*}
		for the function
		\begin{equation}\label{eq:hvarphi}
			\begin{split}
				f^\varphi(t) 
				&= r(t)\1_{[0,\infty)}(t) - \!\!\!\sum_{\lambda\in\intLambda} e^{\lambda t} \sum_{k=0}^{k_\lambda-1} A_{\lambda,k+1} \int_\G \E[\varphi](s)\frac{(t-s)^k}{k!} e^{-\lambda s} \, \ell(\dd s) \1_{(-\infty,0)}(t).
			\end{split}
		\end{equation}
		Moreover, $\sigma=0$
		if and only if $\cZ_t^{\varphi} = H_\Lambda(t) + H_\dLambda(t) + \be_{\tau(\varnothing)}^\trans r(t)$ a.\,s.\ for any $t \geq 0$.
		\item If $n\geq0$, then
		\begin{equation*}
			\frac{e^{-\frac\alpha2 t}}{\sqrt{t^{2n+1}}} \big(\cZ_t^\varphi - H_\Lambda(t) - H_\dLambda(t) \big) \tod \frac{\rho_n}{\sqrt{2n+1}} \Big(\frac{W}\beta\Big)^{\!1/2} \mathcal{N}\quad	\text{as } t \to \infty.
		\end{equation*}
	\end{enumerate}
\end{thm}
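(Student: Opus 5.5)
The plan follows the single-type strategy of \cite{Iksanov+al:2024}: rewrite the centered process as a sum, over a coming generation, of independent centered contributions, and then apply a martingale/Lindeberg-type central limit theorem conditionally on the early generations.

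\emph{Step 1: centering and a renewal-type identity.} Put
\[
\psi = \bxi * f^\varphi + \varphi - \E[\bxi * f^\varphi + \varphi],
\]
so that $\psi$ is a centered characteristic. Here $f^\varphi$ is as in \eqref{eq:hvarphi}; on $[0,\infty)$ it equals $m^\varphi_t$ minus the $\Lambda$-expansion, and it carries the compensating terms on $(-\infty,0)$. The goal is the almost sure identity
\[
\cZ_t^\varphi - H_\Lambda(t) - H_\dLambda(t) = \cZ_t^{\psi} + \sum_{\lambda\in\dLambda}(\text{terms built from } Y(\lambda,k)\text{ along the tree}) + \be_{\tau(\varnothing)}^\trans r(t).
\]
It is verified by:
\begin{itemize}
\item using \eqref{eq:meanAs1} together with the renewal equation $m^\varphi = \E[\varphi] + \E[\bxi]*m^\varphi$, written componentwise in the multi-type form;
\item expressing $H_\Lambda$ through the limits $W(\lambda,k)$, as telescoping sums over generations via the martingale property from \cite{Kolesko+al:2025b};
\item using the recursion \eqref{eq:matrixRec} to identify the resulting polynomial-exponential terms.
\end{itemize}
For $\lambda\in\dLambda$ the martingales are not $L^2$-bounded, so these terms are not absorbed into $H$. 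Instead they yield the sums $\sum_u e^{\lambda(t-S(u))}(t-S(u))^k\,(Y_u(\lambda,k)-\E Y(\lambda,k))$, which produce the extra $t^{2n+1}$ growth.

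\emph{Step 2: variance asymptotics.} Since the summands attached to distinct individuals are uncorrelated given the past, one obtains
\[
\Varv[\cZ^\psi_t] = \Ev[\cZ^{\Var[\psi]}_t].
\]
Here $\Var[\psi]$ satisfies \ref{assumpt:dRivar}, using \ref{assumpt:local ui} and \ref{assumpt:hreprMoments}, which controls $\bxi*f^\varphi$ because $|f^\varphi(t)|\lesssim e^{\frac\alpha2 t}(1+|t|)^{k^*-1}$. Proposition~\ref{thm:asconv} then gives $e^{-\alpha t}\cZ^{\Var[\psi]}_t\to \sigma^2 W/\beta$ in probability. For the critical-line terms the same computation yields sums of the form
\[
\sum_u e^{\alpha(t-S(u))}(t-S(u))^{2k}\,\Var[Y(\lambda,k)]_{\tau(u)}.
\]
Applying the law of large numbers to the characteristic $s\mapsto e^{\alpha s}s^{2k}\1_{[0,t]}(s)$, which requires a truncation argument, gives growth $t^{2n+1}e^{\alpha t}\rho_n^2 W/(\beta(2n+1))$. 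The mixed terms with $\lambda\neq\lambda'$, or with $\lambda'=\bar\lambda$ at nonzero imaginary parts, average out by Riemann–Lebesgue-type oscillation. This explains $\rho_n^2/(2n+1)$. The other contributions, namely $\cZ^\psi$ and the $k<n$ terms, are of lower order when $n\ge0$, and when $n=-1$ the critical terms are deterministic and have already been subtracted.

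\emph{Step 3: CLT.} Condition on $\F_{\cC_s}$, the information up to the coming generation at time $s = s(t)\to\infty$ with $t-s\to\infty$. Then $\cZ_t^\psi$ decomposes as a sum over $v\in\cC_s$ of independent copies shifted by $S(v)$, plus an $\F_{\cC_s}$-measurable remainder. That remainder is small after scaling, since its conditional variance is $O(e^{\alpha s})$. The conditional Lindeberg condition follows from \ref{assumpt:local ui} together with uniform integrability of $e^{-\alpha t}\Var$-contributions, as in \cite{Iksanov+al:2024}. Conditional characteristic functions then converge to $\exp(-\tfrac{u^2}{2}\sigma^2 W/\beta)$, which yields the mixed normal limit. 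The degeneracy criterion $\sigma=0$ follows because $\sigma^2=0$ forces $\Var[\psi]=0$ $\ell$-a.e., hence $\psi\equiv0$ by right-continuity, and the identity of Step 1 then gives the stated representation.

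\emph{Main obstacle.} I expect the main difficulty to be Step 1, specifically the exact bookkeeping of the critical-line terms with poles of order $k_\lambda$. One must show that $\cZ^\varphi - H_\Lambda - H_\dLambda$ has precisely the form needed, with the matrices $A_{\lambda,j}$ combining via \eqref{eq:matrixRec} into the vectors $\bb_{\lambda,j,\varphi}$ and hence into the $Y(\lambda,k)$. One must also prove $L^2$ convergence of the non-critical martingale tails, $W(\lambda)-W_t(\lambda)$ scaled by $e^{(\Re\lambda-\alpha/2)t}\to0$. I would first try to reduce to Lemma~\ref{lem:mtgsasCMJ}, representing these martingales as CMJ processes with explicit characteristics so that Step 2's variance formula applies uniformly.
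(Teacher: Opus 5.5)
Your decomposition follows the paper's architecture: a CMJ process with a centred characteristic of order $e^{\alpha t/2}$, plus a centred critical-line part built from the $Y(\lambda,k)$, plus $\be_{\tau(\varnothing)}^\trans r(t)$. The variance asymptotics also match (Lemma 6.7 of \cite{Iksanov+al:2024} for the critical part). Two points fail, however.

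\textbf{The martingale tails are not negligible.} Your last paragraph says one must show $e^{(\Re(\lambda)-\frac\alpha2)t}(W(\lambda)-W_t(\lambda))\to0$ in $L^2$. This is false. By the proof of Corollary~\ref{lem:mtgLasCMJ}, $W(\lambda)-W_t(\lambda)=\exp(\lambda,-t,k_\lambda)\cZ_t^{\chi_\lambda}$ with $\chi_\lambda$ centred and supported on $(-\infty,0)$. By Lemmas~\ref{lem:mtg} and~\ref{lem:meanviarenewal}, the second moment of $\cZ_t^{\chi_\lambda}$ is in general of exact order $e^{\alpha t}$; for $\lambda=\alpha$ this is just the Gaussian fluctuation of Nerman's martingale. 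These tails are exactly what produces the $\1_{(-\infty,0)}$-part of $f^\varphi$, so discarding them gives the wrong $\sigma^2$ and contradicts your own Step~1. The paper does not discard them. It writes $H_\Lambda(t)=\cZ_t^{\psi_\Lambda}$ exactly, with $\psi_\Lambda$ built from $\phi_\lambda+\chi_\lambda$. It writes $H_\dLambda(t)=\be_{\tau(\varnothing)}^\trans m_t^{\E[\psi_\dLambda]}$ and recentres it with Lemma~\ref{lem:detRecentered}, giving the exact identity \eqref{eq:generalDecomp}; Theorem~\ref{thm:smallMean} then applies to $\varphi-\psi_\Lambda-\psi_\dLambda$, whose mean is $r(t)$. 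Separately, your bound $|f^\varphi(t)|\lesssim e^{\frac\alpha2t}(1+|t|)^{k^*-1}$ is too weak to make $e^{-\alpha t}\Var[\bxi*f^\varphi]$ directly Riemann integrable. You need $r(t)=O(e^{\frac\alpha2t}/(1+t^2))$, as used in the proof of Lemma~\ref{lem:detRecentered}.

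\textbf{The conditional Lindeberg condition in Step~3 is unsupported.} After conditioning on the population born by time $s$, your summands are whole subtree sums $\cZ^{\psi,(v)}_{t-S(v)}$, $v\in\cC_s$. With $Y_r=e^{-\frac\alpha2 r}\cZ_r^\psi$, the Lindeberg sum is $\sum_{v\in\cC_s}e^{-\alpha S(v)}\E^{\tau(v)}\big[Y_{t-S(v)}^2\1_{\{|Y_{t-S(v)}|>\eps e^{\alpha S(v)/2}\}}\big]$. Making this vanish essentially requires uniform integrability of $\{e^{-\alpha r}|\cZ_r^\psi|^2\}_{r\ge0}$, and of $r^{-2n-1}e^{-\alpha r}|\cZ_r^\chi|^2$ in case (ii). That is a statement about entire CMJ sums. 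It does not follow from \ref{assumpt:local ui}, which concerns a single individual's characteristic, and \cite{Iksanov+al:2024} contains no such argument. Under second moments alone it needs real work, for instance truncating $\psi$ and bounding fourth moments of the truncated part by a Burkholder inequality.

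The paper avoids this in Theorem~\ref{thm:centeredChar} by applying the martingale CLT to the individual-level martingale $M_n(t)$ along an admissible ordering. There the conditional variance is exactly $c_t^2\cZ_t^{\E[\chi^2]}$, handled by Proposition~\ref{thm:asconv} or Lemma~\ref{lem:polyCorr}. The Lindeberg term involves only $\E[\be_i^\trans\chi^2(t)\1_{\{\be_i^\trans|\chi(t)|>r\}}]$, controlled by dominated convergence or \eqref{eq:centCharii}.

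Two smaller corrections to Step~3:
\begin{itemize}
\item In case (ii) you need $s=o(t)$, not merely $t-s\to\infty$. Individuals born before $s$ contribute variance of order $s\,t^{2n}e^{\alpha t}$.
\item The $\F_{\cC_s}$-measurable remainder has variance $o(e^{\alpha t})$, but not $O(e^{\alpha s})$ in general.
\end{itemize}
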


\begin{rem}
The convolution $\bxi * f^\varphi$ in the definition of $\sigma$ is to be understood in the matrix-multiplication manner,
namely, $\bxi * f^\varphi$ is the column vector with $i$th entry
\begin{equation*}
\sum_{j=1}^p \xi^{ij} * f^\varphi_j = \sum_{j=1}^p f^\varphi_j * \xi^{ij}.
\end{equation*}
Here, of course, $f^\varphi_j$ is the $j$th component of $f^\varphi$.
\end{rem}

\begin{rem}	\label{rem:sigma}
In the non-lattice setting, the formula for $\sigma$ in case $n=-1$ may be further transformed using Plancherel's theorem into
\begin{equation}
\sigma^2
= \frac{1}{2\pi} \int_{\frac\alpha2 -i\infty}^{\frac\alpha2 + i\infty} \bu^\trans \Var \big[ \Lap\varphi(z) + \Lap\bxi(z)\Lap f^\varphi(z) \big] |dz|
\end{equation}
by repeating the argument used in \cite[Remark 2.18]{Iksanov+al:2024}.
Moreover, if $\varphi$ is supported on $[0,\infty)$, $\dLambda$ is empty and all the roots in $\Lambda$ are simple, then
\begin{equation}
\Lap f^\varphi(z) = (I - \Lap\bmu(z))^{-1} \Lap(\E[\varphi])(z)
\end{equation}
for $z$ in some neighborhood of $\alpha/2 + i\R$.
\end{rem}

\begin{rem}
	Let us note that the convergence in distribution claimed in Theorem~\ref{thm:main} is in fact stable (see \cite{Aldous+Eagelson:1978} for a precise definition) and that the random variable $\mathcal{N}$ is independent of~$\F$. This can be seen by repeating the remark at the end of the proof of Theorem 6.3 in~\cite{Iksanov+al:2024}. In particular, in the case  (i) of Theorem~\ref{thm:main},
	\begin{equation*}
		\frac{1}{\sqrt{e^{\alpha t}W}} \big(\cZ_t^\varphi - H_\Lambda(t) - H_\dLambda(t) \big) \tod \sigma \beta^{-1/2} \mathcal{N}
		\quad	\text{as } t \to \infty\quad \text{ conditionally given }\Surv.
	\end{equation*}
\end{rem}

\subsection{Further organization of the paper} In the remainder of this section, we provide additional notation which will be used throughout the paper. In Section~\ref{sec:applications},
we present applications of Theorem~\ref{thm:main}.
In Section~\ref{sec:MRW}, we gather known facts concerning the Markov random walk associated with the CMJ process.
The well-definiteness of~\eqref{eq:def CMJ} is addressed in Section~\ref{sec:existence}.
In Section~\ref{sec:mean-and-mtgs}, recent results on the mean expansion and martingales are gathered, together with several new results. In Section~\ref{sec:core}, we present auxiliary results concerning convergence in distribution of CMJ processes and prove Theorem~\ref{thm:main}.

\subsection{Matrix notation}\label{sec:matrices}

Recall that for matrices $A \in \M_{n\times m}(\C)$ and $B \in \M_{k\times l} (\C)$,
the Kronecker product $A \otimes B \in \M_{nk \times ml}(\C)$ is a block matrix of the form
\begin{equation*}
	A\otimes B = \begin{pmatrix}
		a_{11} B & a_{12} B & \dots & a_{1m} B \\
		a_{21} B & a_{22} B & \dots & a_{2m} B \\
		\vdots & & & \vdots \\
		a_{n1} B & a_{n2} B & \dots & a_{nm} B
	\end{pmatrix}.
\end{equation*}
The Kronecker product is a tensor product. In particular, it is bilinear and associative.

We will sometimes consider a CMJ process counted with a matrix-valued characteristic.
For any $k,l,n\in\N$ and a characteristic $\varphi: \Omega_\varnothing \times \R \to \M_{k\times l}(\C) \otimes \M_{p\times n}(\C)$,
we define $(\cZ_t^\varphi)_{t\geq 0}$ as the $\M_{k\times l}(\C) \otimes \M_{1\times n}(\C)$-valued process given by
\begin{equation*}
\cZ_t^\varphi = \sum_{u\in\Tree} (\Ik \otimes \be_{\tau(u)}^\trans) \varphi_u(t-S(u)),
\end{equation*}
where $\Ik \in \M_{k\times k}(\C)$ is the identity matrix.
If $\varphi = \phi \otimes \psi$ for $\phi : \Omega_\varnothing \times \R \to \M_{k\times l}(\C)$ and $\psi : \Omega_\varnothing \times \R \to \M_{p\times n}(\C)$, $\cZ_t^\varphi$ is a random block matrix of the form
\begin{equation*}
	\cZ_t^\varphi = \begin{pmatrix}
		\sum_{u\in\Tree} \phi_{11}(t - S(u)) \be_{\tau(u)}^\trans \psi(t - S(u)) & \dots & \sum_{u\in\Tree} \phi_{1l}(t - S(u)) \be_{\tau(u)}^\trans \psi(t - S(u)) \\
		\vdots & & \vdots \\
		\sum_{u\in\Tree} \phi_{k1}(t - S(u)) \be_{\tau(u)}^\trans \psi(t - S(u)) & \dots & \sum_{u\in\Tree} \phi_{kl}(t - S(u)) \be_{\tau(u)}^\trans \psi(t - S(u))
		\end{pmatrix}.
\end{equation*}
That is, $\cZ^\varphi_t$ may be seen as a matrix of scalar-valued CMJ processes counted with characteristics $\phi_{ij}\psi_{\cdot h}$ for $i\in[k], j\in[l], h\in[n]$ and $\psi_{\cdot h}$ denoting $h$th column of $\psi$.

Observe that for any $i\in[k],j\in[l]$ the block $Y^{(i,j)}_t \coloneqq \sum_{u\in\Tree}\phi_{ij}(t - S(u)) \be_{\tau(u)}^\trans \psi(t - S(u))$
is a random row vector. Using the convention introduced in Section~\ref{sec:CMJ definition},
we may define $\Ev[\cZ_t^\varphi] \in \M_{k\times l}(\C) \otimes \M_{p\times n}(\C)$ as the block matrix
\begin{equation}\label{eq:mean of matrix-valued CMJ}
	\Ev[\cZ_t^\varphi] = \begin{pmatrix}
		\Ev\big[Y_t^{(1,1)}\big] & \dots & \Ev\big[Y_t^{(1,l)}\big] \\
		\vdots & & \vdots \\
		\Ev\big[Y_t^{(k,1)}\big] & \dots & \Ev\big[Y_t^{(k,l)}\big]
	\end{pmatrix}.
\end{equation}
In other words, the first $p$ rows of $\Ev[\cZ_t^\varphi]$ contain the expectations of the first row of $\cZ_t^\varphi$ under all possible types of the ancestor, the next $p$ rows of $\Ev[\cZ_t^\varphi]$ contain the expectations of the second row of $\cZ_t^\varphi$ etc. Note that when $k>1$, $\Ev[\cZ_t^\varphi]$ is \emph{not} the block matrix $(\E^1[\cZ_t^\varphi], \dots, \E^p[\cZ_t^\varphi])^\trans$.
Although this convention may seem peculiar at first, it is natural
when dealing with the martingale characteristics introduced in Section~\ref{sec:mean-and-mtgs}.

Note for later use that for a characteristic $\varphi$ as above and deterministic matrices $A \otimes \I_p \in \M_{1\times k}(\C) \otimes \M_{p \times p}(\C)$, $B \otimes C \in \M_{l\times 1}(\C) \otimes \M_{n \times 1}(\C)$, the process given by
\begin{equation*}
	(A \otimes \Ip) \cZ_t^\varphi (B \otimes C) = \cZ_t^{\varphi'}
\end{equation*}
is a standard, scalar-valued CMJ-process counted with characteristic
\begin{equation*}
	\varphi' \coloneqq (A \otimes \Ip) \varphi (B \otimes C) : \Omega_\varnothing \times \R \to  \C^p.
\end{equation*}

We will repeatedly use the Kronecker product
since it allows to simplify formulae and calculations involving the mean expansion and martingales connected to CMJ processes.
To the same end, we define the \emph{exponential matrices}. For any $\lambda\in\C$, $t\in \R$ and $k \in \N$,
let $\exp(\lambda, t, k)$ be the $k\times k$ matrix of the form
\begin{equation}\label{eq:defExp1}
	\exp(\lambda, t, k) \coloneqq e^{\lambda t}\begin{pmatrix}
		1		& t			&	\frac{t^2}{2}	& \dots		& \frac{t^{k-1}}{(k-1)!} \\
		0		& 1			& 	t				& \dots		& \frac{t^{k-2}}{(k-2)!} \\
		0		& 0			& 	1				& \ddots	& \vdots \\
		\vdots	& 			& \ddots			& \ddots	& t \\
		0 		&  			& 	\dots			& 			& 1
	\end{pmatrix},
\end{equation}
i.e., $\exp(\lambda, t,k)$ is an upper-triangular matrix with value $e^{\lambda t} \frac{t^{j}}{j!}$ on $j$th diagonal.
Equivalently,
\begin{equation}\label{eq:defExp2}
	\exp(\lambda, t, k) = \exp\big(t(\lambda \Ik + N_k)\big),
\end{equation}
where $\Ik$ is $k\times k$ identity matrix and
\begin{equation*}
	N_k \coloneqq \begin{pmatrix}
		0 		& 1 & 0			& \dots 	& 0 \\
		0 		& 0 & 1			& 		 	& 0 \\
		\vdots 	& 	& \ddots	& \ddots 	& \vdots \\
		\vdots	&	&			& \ddots	& 1 \\
		0 		& 	& \dots 	&			& 0	 
	\end{pmatrix}
\end{equation*}
is a nilpotent $k\times k$ matrix with value $1$ on the superdiagonal and $0$ everywhere else. In particular, for every $s,t\in\R$,
\begin{equation}\label{eq:expMult}
	\exp(\lambda, t, k) \exp(\lambda, s, k) = \exp(\lambda, t+s, k).
\end{equation}
Moreover, the operator norm of the exponential matrix may be estimated by its Hilbert--Schmidt norm, which leads to
\begin{equation*}
	\|  \exp(\lambda,t,k)\| \leq \|  \exp(\lambda,t,k) \|_{\mathrm{HS}} \leq C' (1+|t|)^{k-1} e^{\Re(\lambda)t} \leq C e^{\Re(\lambda)t + \delta|t|}
\end{equation*}
for any $\delta > 0$, a constant $C'$ depending on $k$ and a constant $C$ depending on $k,\delta$. More generally, for every $n\in\N$,
\begin{equation}\label{eq:expNorm}
	\begin{split}
	\| \exp(\lambda,t,k) \otimes \In \| &\leq \| \exp(\lambda,t,k) \otimes \In \|_{\mathrm{HS}} = \sqrt{n} \|\exp(\lambda,t,k) \|_{\mathrm{HS}} \\
	&\leq \sqrt{n} C'(1+|t|)^{k-1} e^{\Re(\lambda)t} \leq \sqrt{n} C e^{\Re(\lambda)t + \delta|t|}.
	\end{split}
\end{equation}

\section{Applications}\label{sec:applications}

\subsection{P\'olya urns and continuous-time Markov branching processes}\label{sec:Polya}

In the generalized P\'olya urn model,
we consider an urn containing initially some (possibly random) configuration of balls of $p \in \N$ different types.
At each step, a ball is drawn uniformly at random from the urn. The sampled ball is then either returned to the urn or removed from it. Subsequently,  $B_{ij}$ balls of type $j$ are added to the urn for each $j\in[p]$.
Here, $B_{ij} \in \N_0$ for $i,j \in [p]$.
The vectors $(B_{i1},\ldots,B_{ip})$ may also be random, sampled independently for each ball from some fixed distribution
that may, of course, depend on $i$.
For the analysis of the model it is convenient to consider its continuous-time version
obtained via the Athreya--Karlin embedding \cite{Athreya+Karlin:1968}.
We also refer to \cite{Janson:2004}, where this connection was studied in depth.
The Athreya--Karlin embedding works as follows:
Each ball is equipped with an exponential clock
and picked from the urn when the clock rings.
This gives rise to a branching process where each ball is represented by an individual. In the case where the sampling is performed with replacement, the reproduction point process is given by 
\begin{equation*}
\xi^{ij} = \sum_{n\in\N} B_{ij}(n) \delta_{T_n},	\quad	i,j \in [p]
\end{equation*}
where $\xi = \sum_{n\in\N} \delta_{T_n}$ is a homogeneous Poisson point process on $[0,\infty)$ with rate $1$,
independent of the i.i.d.\ matrices $ (B_{ij}(n))_{i,j\in[p]}$, $n\in\N$.
If in the original model balls are removed and not returned to the urn, we have
\begin{equation*}
\xi^{ij} = B_{ij}(1) \delta_{T_1},	\quad	i,j \in [p].
\end{equation*}
The with-replacement model may be viewed as a special case of the without-replacement model in which every removed ball is instantaneously replaced by a new ball of the same type.
The corresponding random variables $B_{ij}$ are then replaced by $B_{ij}+\1_{\{i=j\}}$.

Before providing specific examples, let us give some remarks on the correspondence between our setting and classical results on P\'olya urns.
Here, for simplicity, we confine ourselves to the case where each drawn ball is returned to the urn.
The reproduction intensity then is $\bmu([0,t]) = B t$ where $B = (\E[B_{ij}])_{i,j\in[p]}$ is the mean replacement matrix.
We assume $B$ to be irreducible. We have
\begin{equation*}
\Lap\bmu(z) = \frac1zB,	\quad	z \not = 0.
\end{equation*}
Observe that
\begin{equation*}
\det(\Ip - \Lap\bmu(z)) = z^{-p}\det(z\Ip - B) = z^{-p}\chi_B(z)
\end{equation*} 
where $\chi_B$ is the characteristic polynomial of the matrix $B$. As a consequence,
elements of the set $\Lambda$ are eigenvalues of $B$
and the Malthusian parameter $\alpha$ is the dominant (Perron--Frobenius) eigenvalue of $B$. The vectors $\bu^\trans,\bv$ are left and right eigenvectors of $B$ corresponding to $\alpha$ and we have
\begin{equation*}
\beta = \bu^\trans \frac1{\alpha^2} B \bv = \frac1\alpha.
\end{equation*}

Moreover, all the matrices $A_{\lambda,k}$ may be expressed in terms of generalized eigenvectors of $B$. Let $\lambda_1, \dots, \lambda_n$ be the eigenvalues of $B$ and consider the Jordan decomposition $B = P D P^{-1}$, where
\begin{equation*}
D = \begin{pmatrix}
J(\lambda_1, 1) \\
& \ddots \\
& & J(\lambda_1,m_{\lambda_1}) \\
&&& J(\lambda_2,1) \\
&&&& \ddots \\
&&&&&J(\lambda_2, m_{\lambda_2}) \\
&&&&&&\ddots
\end{pmatrix}.
\end{equation*}
Here, for each $j\in[n]$, $m_{\lambda_j}$ is the geometric multiplicity of $\lambda_j$ and the matrices $J_{\lambda_j, i}, i\in[m_{\lambda_j}]$ are the corresponding Jordan blocks with dimensions $d_{j,i}, i\in[m_{\lambda_j}]$. The columns $\bv(1), \dots, \bv(p)$ of $P$ are the generalized right eigenvectors of $B$ and the rows $\bu^\trans(1),\dots,\bu^\trans(p)$ of $P^{-1}$ are the generalized left eigenvectors. We may assume without loss of generality that $\lambda_1 = \alpha$ and $\bv(1) = \bv$. Note that, since $P^{-1}P = \Ip$, the left and right eigenvectors satisfy
\begin{equation*}
\bu(j)^\trans \bv(i) = \1_{\{i=j\}} \quad \text{for } i,j\in[p].
\end{equation*}
In particular, $\bu(1) = \bu$.

The configuration of the balls present in the urn at time $t\geq 0$ is given by $\cZ_t^\varphi$ for the matrix-valued characteristic
\begin{equation*}
\varphi(t) = \Ip \1_{[0,\infty)}(t).
\end{equation*}
Since $(\cZ_t^\varphi)_{t\geq 0}$ is a Markov process with generator $B$, we have
\begin{equation*}
m_t^\varphi = e^{tB} = P \begin{pmatrix}
e^{tJ(\lambda_1, 1)} \\
& \ddots \\
&&e^{tJ(\lambda_n, m_{\lambda_n})}
\end{pmatrix} P^{-1}.
\end{equation*}
Observe that
$$ e^{tJ(\lambda_j, i)} = \exp(\lambda_j, t, d_{j,i}). $$

On the other hand,
\begin{equation*}
(\Ip - \Lap\mu(z))^{-1} = P z (z\Ip - D)^{-1} P^{-1}
\end{equation*}
and the inverse $(z\Ip - D)^{-1}$ is a block matrix, with diagonal consisting of matrices
\begin{equation}\label{eq:Polya-inverse of Jordan}
(z \I_{d_{j,l}} - J(\lambda_j, l))^{-1} =  \begin{pmatrix}
(z-{\lambda_j})^{-1} & (z-{\lambda_j})^{-2} &\cdots & (z-{\lambda_j})^{-d_{j,l}} \\
& \ddots & \ddots \\
& &  (z-{\lambda_j})^{-1} & (z-{\lambda_j})^{-2} \\
&&& (z-{\lambda_j})^{-1}
\end{pmatrix}.
\end{equation}
for $j\in[n], l\in[m_{\lambda_j}]$. Note that this implies that for an eigenvalue $\lambda\neq 0$, $k_{\lambda}$ is the size of the largest Jordan block corresponding to $\lambda$. For $\lambda=0$ it is one less. Extending the definition of matrices $A_{\lambda,k}$ to eigenvalues lying not necessarily in the set $\Lambda$, we obtain, for $i \in [n], k\in[k_{\lambda_i}]$,
\begin{equation*}
\begin{split}
A_{\lambda_i, k} &= \frac1{(k_{\lambda_i}-k)!} \frac{\dd^{k_{\lambda_i}-k}}{\dd z^{k_{\lambda_i}-k}} \Big|_{z={\lambda_i}} \Big( (z-{\lambda_i})^{k_{\lambda_i}} \big(\Ip - \Lap\bmu(z)\big)^{-1} \Big) \\
&= \frac1{(k_{\lambda_i}-k)!} P \frac{\dd^{k_{\lambda_i}-k}}{\dd z^{k_{\lambda_i}-k}} \Big|_{z={\lambda_i}} \Big( z(z-{\lambda_i})^{k_{\lambda_i}} \big(z\Ip - D\big)^{-1} \Big) P^{-1},
\end{split}
\end{equation*}
where the derivative is applied entry-wise. Observe that, in view of \eqref{eq:Polya-inverse of Jordan}, the derivative at $z = \lambda_i$ may be non-zero only in blocks corresponding to $\lambda_i$. In other words, the matrix $A_{\lambda_i,k}$ acts essentially only on the eigenspace corresponding to $\lambda_i$, since the subspace spanned by generalized eigenvectors corresponding to other eigenvalues is contained in its kernel. Let $N_d^{(k)}$, for $0\leq k < d$, be a $d\times d$ matrix with value $1$ on $k$th diagonal and $0$ everywhere else, and a zero matrix for $k\geq d$ and $k<0$. We have
\begin{equation*}
\frac{\dd^{k_{\lambda_i}-k}}{\dd z^{k_{\lambda_i}-k}} \Big|_{z=\lambda_i} \Big( z(z-{\lambda_i})^{k_{\lambda_i}} \big(z\Ip - J({\lambda_i}, l)\big)^{-1} \Big) = (k_{\lambda_i}-k)! \Big(N^{(k)}_{d_{i,l}} + \lambda N^{(k-1)}_{d_{i,l}} \Big),
\end{equation*}
therefore
\begin{equation}\label{eq:PolyaA}
A_{\lambda_i,k} = P \begin{pmatrix}
0 \\
& \ddots \\
&& N^{(k)}_{d_{i,1}} + \lambda_i N^{(k-1)}_{d_{i,1}} \\
&&& \ddots \\
&&&& N^{(k)}_{d_{i,k_i}} + \lambda_i N^{(k-1)}_{d_{i,k_i}} \\
&&&&&\ddots \\
&&&&&&0
\end{pmatrix} P^{-1}.
\end{equation}
In particular, if for some $i\in[n]$ the algebraic and geometric multiplicity of $\lambda_i$ are equal, then $k_{\lambda_i} = 1$ and, with $E(\lambda_i) = \{v \in \C^p : Bv = \lambda_i v\}$ denoting the corresponding eigenspace,
\begin{equation*}
A_{\lambda_i,1} = \lambda_i \sum_{k : \bv(k) \in E(\lambda_i) } \bv(k) \bu(k)^\trans.
\end{equation*} 

Further, the mean $m_t^\varphi$ may be written in terms of the matrices $A_{\lambda,k}$. That is, if $0$ is not an eigenvalue of $B$,
\begin{equation}\label{eq:Polya-mean}
m_t^\varphi 
= \sum_{i=1}^n e^{\lambda_i t} \sum_{k=1}^{k_{\lambda_i}} A_{\lambda_i,k} \sum_{m=0}^{k-1} \frac{t^m}{m!} (-1)^{k-1-m} \lambda_i^{m-k}.
\end{equation}
Indeed, we have, for any $i\in[n], l\in[m_{\lambda_i}]$, a telescoping sum
\begin{equation*}
\begin{split}
e^{\lambda_i t}&\sum_{k=1}^{k_{\lambda_i}} \big( N^{(k)}_{d_{i,l}} + \lambda_i N^{(k-1)}_{d_{i,l}} \big) \sum_{m=0}^{k-1} \frac{t^m}{m!} (-1)^{k-1-m} \lambda_i^{m-k} \\
&= \sum_{m=0}^{k_{\lambda_i}-1} e^{\lambda_i t} \frac{t^m}{m!} \Big( \sum_{k=m+2}^{k_{\lambda_i} + 1} \frac{(-1)^{k-2-m}}{\lambda_i^{k-m-1}} N^{(k-1)}_{d_{i,l}} + \sum_{k=m+1}^{k_{\lambda_i}} \frac{(-1)^{k-1-m}}{\lambda_i^{k-m}} \lambda_i N^{(k-1)}_{d_{i,l}} \Big) \\
&= \sum_{m=0}^{k_{\lambda_i}-1} e^{\lambda_i t} \frac{t^m}{m!} N_{d_{i,l}}^{(m)} = \exp(\lambda_i, t, d_{i,l}) = e^{t J(\lambda_i, l)},
\end{split}
\end{equation*}
which, in view of \eqref{eq:PolyaA}, implies \eqref{eq:Polya-mean}.

If $0$ is an eigenvalue of $B$, then it gives rise to a slightly different term: the block matrix consisting of exponents of Jordan blocks corresponding to $0$ may be written as
\begin{equation*}
\sum_{k=0}^{k_0} \frac{t^k}{k!} A_{0,k},
\end{equation*}
where $A_{0,0}$ is as in \eqref{eq:PolyaA} (i.e.\ identity on the generalized eigenspace corresponding to $0$).

For positive $\lambda_i$, the coefficient in \eqref{eq:Polya-mean} equals
$$ \sum_{m=0}^{k-1} \frac{t^m}{m!} (-1)^{k-1-m} \lambda_i^{m-k} = \int_0^\infty \frac{(t-s)^{k-1}}{(k-1)!} e^{-\lambda_i s} \dd s,$$
i.e.\ this representation of the mean is consistent with \eqref{eq:meanAs1}.

Note for future use that \eqref{eq:matrixRec} in this case reads
\begin{equation}\label{eq:Polya-matrixRec}
A_{\lambda,j} = \sum_{l=j}^{k_\lambda} \frac{(-1)^{l-j}}{\lambda^{l-j+1}}BA_{\lambda,l} = \sum_{l=j}^{k_\lambda} \frac{(-1)^{l-j}}{\lambda^{l-j+1}}A_{\lambda,l}B.
\end{equation}

In order to obtain limit theorems for $\cZ_t^\varphi$, one may apply Theorem \ref{thm:main} to the vector-valued characteristics
\begin{equation}\label{eq:PolyaChar}
\varphi_{\mathbf{q}}(t) = \mathbf{q} \1_{[0,\infty)}(t) = \varphi(t) \mathbf{q}
\end{equation}
for any $\mathbf{q}\in\R^p$ and infer the limit theorem for the configuration vector $\cZ^\varphi_t$ using the Cram\'er--Wold device.
Since $\cZ_t^{\varphi_\mathbf{q}} = \cZ_t^{\varphi \mathbf{q}} = \cZ_t^\varphi \mathbf{q}$,
\begin{equation*}
m_t^{\varphi_\mathbf{q}} = m_t^{\varphi} \mathbf{q},	\quad	t \geq 0.
\end{equation*}

The discrete-time analogue of Theorem \ref{thm:main} for urn schemes was provided in
\cite{Mueller:2019} under the additional assumption that $B$ is diagonalisable and the replacement is deterministic.
In the general setting, the functional central limit theorem was proved in \cite{Janson:2004} in the case where $\intLambda$ contains only the Malthusian parameter $\alpha$. We are not aware of a comprehensive result that covers Gaussian fluctuations for the urn schemes
in the case of $\Lambda$ containing multiple roots of arbitrary multiplicity.
Theorem \ref{thm:main} provides such a result in the continuous-time setting as long as $B$ is irreducible.

\subsubsection{Balanced urns}
The urn model is called \emph{balanced} if the number of balls added at each step is constant. That is, there exists $b \in \N$ such that for every $i\in[p]$, $\sum_{j\in[p]} B_{ij} = b$ almost surely. Much attention in the study of balanced urns is directed at the distribution of the martingale limits that contribute to the asymptotic behaviour of the configuration vector when the urn is \emph{large}, i.e.\ $|\intLambda| > 1$; see, for example, \cite{Chauvin+al:2011, Mailler:2018}. In this section, we consider a simple example of a balanced urn that illustrates the use of Theorem \ref{thm:main}.

Fix $b \geq 5$ and consider a three-colour urn with deterministic replacement rule given by
\begin{equation*}
B = \begin{pmatrix}
b-2 & 1 & 1 \\
0 & b-2 & 2 \\
4 & 0 & b-4
\end{pmatrix}.
\end{equation*}
Observe that $B$ has two eigenvalues
\begin{equation*}
\alpha = b, \quad \lambda = b-4.
\end{equation*}
We have $k_\alpha = 1, k_\lambda = 2$ and, by \eqref{eq:PolyaA},
\begin{align*}
A_{\alpha,1} &= \frac{b}{4}
\begin{pmatrix}
2 & 1 & 1 \\
2 & 1 & 1 \\
2 & 1 & 1
\end{pmatrix}, \\
A_{\lambda,2} &= (b - 4)
\begin{pmatrix}
0 & 0 & 0 \\
-2 & 1 & 1 \\
2 & -1 & -1
\end{pmatrix}, \quad
A_{\lambda,1} = \frac14
\begin{pmatrix}
2b - 8 & 4-b & 4 - b \\
-2b & 3b - 8 & 8 - b \\
16 - 2b & -b & 3b-16
\end{pmatrix}.
\end{align*}
Furthermore,
\begin{equation*}
\bu = \frac34\begin{pmatrix}2 \\ 1 \\ 1 \end{pmatrix}, \quad \bv = \frac13 \begin{pmatrix} 1 \\ 1 \\ 1 \end{pmatrix}, \quad \beta = \frac1b.
\end{equation*}
In particular, Nerman's martingale is given by
\begin{equation*}
W_t = \frac13 \sum_{u\in\cC_t} e^{-\alpha S(u)}
\end{equation*}
and we have
\begin{align*}
W_t(\alpha) &= \frac{b}4 \sum_{u\in\Cc_t} e^{-\alpha S(u)} \begin{pmatrix} 2 & 1 & 1 \end{pmatrix} = b W_t \bu^\trans,\\ 
W_t(\lambda,1) &= \sum_{u\in\Cc_t} e^{-\lambda S(u)} \big(\be_{\tau(u)}^\trans A_{\lambda,1} - S(u) \be_{\tau(u)}^\trans A_{\lambda,2} \big) \\
&= \sum_{u\in\Cc_t} e^{-\lambda S(u)} \Big(\be_{\tau(u)}^\trans A_{\lambda,1} -(b-4)S(u) \big(\1_{\{\tau(u)=3\}} - \1_{\{\tau(u)=2\}} \big) \begin{pmatrix} 2 & -1 & -1 \end{pmatrix}\Big),\\ 
W_t(\lambda,2) 
&= (b-4) \sum_{u\in\Cc_t} e^{-\lambda S(u)} (\1_{\{\tau(u) = 3\}} - \1_{\{\tau(u) = 2\}}) \begin{pmatrix} 2 & -1 & -1 \end{pmatrix}.
\end{align*}
Finally, for the characteristic defined in \eqref{eq:PolyaChar}, we have
\begin{equation*}
m_t^{\varphi_\bq} = \bigg(e^{\alpha t} \frac1\alpha A_{\alpha,1} + e^{\lambda t}\Big( \frac1\lambda A_{\lambda,1} + \frac{\lambda t - 1}{\lambda^2} A_{\lambda,2} \Big)\bigg)\1_{[0,\infty)}(t) \bq.
\end{equation*}
The scaling in the central limit theorem depends on whether or not $\lambda > \frac\alpha2$, i.e.~$b > 8$.

\begin{enumerate}[\normalfont(i), wide]
	\item Small urn: $b < 8$. In this case $\Lambda = \{\alpha\}$ and $\dLambda = \varnothing$. We have
	\begin{equation*}
	H_\Lambda^\bq(t) = e^{\alpha t} W(\alpha) \int_0^\infty \bq e^{-\alpha s} \dd s = e^{\alpha t} \frac1\alpha W(\alpha) \bq = e^{b t} W \bu^\trans \bq, 
	\end{equation*}
	where $W$ is the limit of Nerman's martingale. By Theorem \ref{thm:main},
	\begin{equation*}
	e^{-\frac{b t}2} \Big( \cZ_t^{\varphi_\bq} - e^{b t} W \bu^\trans \bq \Big) \tod \sigma_\bq (b W)^{1/2} \mathcal{N},
	\end{equation*}
	where
	\begin{equation*}
	\sigma^2_\bq = \int_\R e^{-\alpha t} \bu^\trans \Var[B\xi * f^\bq](t) \dd t
	\end{equation*}
	for the function
	\begin{equation*}
	f^\bq(t) = e^{\lambda t}\Big( \frac1\lambda A_{\lambda,1} \bq + \frac{\lambda t - 1}{\lambda^2} A_{\lambda,2} \bq \Big)\1_{[0,\infty)}(t) - e^{\alpha t} \frac1\alpha A_{\alpha,1} \bq \1_{(-\infty,0)}(t).
	\end{equation*}
	Fix $t\in\R$. Observe that by \eqref{eq:Polya-matrixRec}
	\begin{equation*}
	B f^\bq(t) = e^{\lambda t} \big( A_{\lambda,1} \bq + t A_{\lambda,2} \bq \big)1_{[0,\infty)}(t) - e^{\alpha t} A_{\alpha,1} \bq \1_{(-\infty,0)}(t),
	\end{equation*}
	therefore $i$th coordinate of $B\xi*f^\bq(t)$ is given by
	\begin{equation*}
	\be_i^\trans (B\xi * f^\bq)(t) = \int_{-\infty}^t e^{\lambda(t-s)} \big(\be_i^\trans A_{\lambda,1} \bq + (t-s) \be_i^\trans A_{\lambda,2} \bq \big) \xi(\dd s) - \int_t^\infty e^{\alpha(t-s)} \be_i^\trans A_{\alpha,1} \bq \xi(\dd s).
	\end{equation*}
	Since $\xi$ is a homogeneous Poisson point process, we have
	\begin{multline*}
	\be_i^\trans\Var[B\xi * f^\bq](t) =  \frac{e^{2\alpha t}}{2\alpha} (\be_i^\trans A_{\alpha,1}\bq)^2\1_{(-\infty, 0)}(t) \\
	+ \bigg(\int_{0}^t e^{2\lambda(t-s)} (\be_i^\trans A_{\lambda,1}\bq + (t-s) \be_i^\trans A_{\lambda,2} \bq )^2 \dd s + \int_{t}^\infty e^{2\alpha(t-s)} (\be_i^\trans A_{\alpha,1} \bq)^2 \dd s\bigg)\1_{[0,\infty)}(t). 
	\end{multline*}
	Evaluating the integrals, we get
	\begin{equation*}
	\begin{split}
	\sigma_\bq^2 &=
	\frac1{\alpha^2} \sum_{i=1}^3 \bu_i (\be_i^\trans A_{\alpha,1} \bq)^2  +\frac{1}{\alpha(\alpha-2\lambda)} \sum_{i=1}^3 \bu_i (\be_i^\trans A_{\lambda,1} \bq)^2 \\
	& + \frac{2}{\alpha(\alpha-2\lambda)^2} \sum_{i=1}^3 \bu_i (\be_i^\trans A_{\lambda,1} \bq)(\be_i^\trans A_{\lambda,2} \bq)  + \frac2{\alpha(\alpha-2\lambda)^3} \sum_{i=1}^3 \bu_i (\be_i^\trans A_{\lambda,2} \bq)^2\\
	& = \frac1{\alpha} \bq^\trans \Sigma \bq
	\end{split}
	\end{equation*}
	for the matrix
	\begin{equation*}
	\begin{split}
	\Sigma &= \frac1{\alpha}  A_{\alpha,1}^\trans \mathrm{diag}(\bu) A_{\alpha,1}
	+ \frac1{\alpha-2\lambda} A_{\lambda,1}^\trans \mathrm{diag}(\bu) A_{\lambda,1} + \frac{2}{(\alpha-2\lambda)^3} A_{\lambda,2}^\trans \mathrm{diag}(\bu) A_{\lambda,2} \\
	&+ \frac{1}{(\alpha-2\lambda)^2} \Big( A_{\lambda,1}^\trans \mathrm{diag}(\bu) A_{\lambda,2} + A_{\lambda,2}^\trans \mathrm{diag}(\bu) A_{\lambda,1} \Big),
	\end{split}
	\end{equation*}
	where $\mathrm{diag}(\bu)$ is a $3\times3$ matrix with entries of $\bu$ on the diagonal and $0$ everywhere else. Since $\bq$ is arbitrary, the Cram\'er--Wold device implies
	\begin{equation*}
	e^{-\frac{b t}2} \Big( \cZ_t^\varphi - e^{b t} W \bu^\trans \Big) \tod  W^{1/2} \mathcal{N}(0,\Sigma).
	\end{equation*}
	\item Large urn: $b > 8$. In this case $\Lambda = \{\alpha,\lambda\}$ and we have
	\begin{equation*}
	\begin{split}
	H_\Lambda^\bq(t) &= e^{b t} W \bu^\trans \bq + e^{\lambda t} \bigg(\frac{W(\lambda,1) + t W(\lambda,2)}{\lambda} - \frac{W(\lambda,2)}{\lambda^2} \bigg)\bq.
	\end{split}		
	\end{equation*}
	By Theorem \ref{thm:main},
	\begin{equation*}
	e^{-\frac{b t}2} \Big( \cZ_t^{\varphi_\bq} - H_\Lambda^\bq(t) \Big) \tod \sigma_\bq (b W)^{1/2} \mathcal{N}
	\end{equation*}
	where the function $f^\bq$ is of the form
	\begin{equation*}
	f^\bq(t) = -\bigg(e^{\lambda t}\Big( \frac1\lambda A_{\lambda,1} \bq + \frac{\lambda t - 1}{\lambda^2} A_{\lambda,2} \bq \Big) + e^{\alpha t} \frac1\alpha A_{\alpha,1} \bq\bigg) \1_{(-\infty,0)}(t).
	\end{equation*}
	Proceeding as in the previous case, we conclude
	\begin{equation*}
	e^{-\frac{b t}2} \bigg( \cZ_t^{\varphi} - e^{b t} W \bu^\trans - e^{(b-4) t} \bigg(\frac{W(\lambda,1) + t W(\lambda,2)}{b-4} - \frac{W(\lambda,2)}{(b-4)^2} \bigg) \bigg) \tod  W^{1/2} \mathcal{N}(0,\Sigma)
	\end{equation*}
	for the covariance matrix
	\begin{equation*}
	\begin{split}
	\Sigma &= \frac1\alpha  A_{\alpha,1}^\trans \mathrm{diag}(\bu) A_{\alpha,1} + \frac4{\lambda(2\lambda-\alpha)} A_{\lambda,1}^\trans \mathrm{diag}(\bu) A_{\lambda,1} + \frac2{(2\lambda-\alpha)^3} A_{\lambda,2}^\trans \mathrm{diag}(\bu) A_{\lambda,2} \\
	&+ \frac1\lambda \Big(A_{\alpha,1}^\trans \mathrm{diag}(\bu) A_{\lambda,1} + A_{\lambda,1}^\trans \mathrm{diag}(\bu) A_{\alpha,1}\Big)
	- \frac1{\lambda^2} \Big(A_{\alpha,1}^\trans \mathrm{diag}(\bu) A_{\lambda,2} + A_{\lambda,2}^\trans \mathrm{diag}(\bu) A_{\alpha,1}\Big) \\
	& - \frac1{(2\lambda-\alpha)^2} \Big(A_{\lambda,1}^\trans \mathrm{diag}(\bu) A_{\lambda,2} + A_{\lambda,2}^\trans \mathrm{diag}(\bu) A_{\lambda,1}\Big).
	\end{split}
	\end{equation*}
	\item Critical case: $b = 8$. In this case $\lambda = 4$ lies on the critical line. Note that
	\begin{align*}
	\bb_{\lambda,0,\varphi_\bq} &= \frac1\lambda A_{\lambda,1} \bq - \frac1{\lambda^2} A_{\lambda,2} \bq,\\
	\bb_{\lambda,1,\varphi_\bq} &= \frac1\lambda A_{\lambda,2}\bq
	\end{align*}
	and with the help of \eqref{eq:Polya-matrixRec} we get
	\begin{align*}
	Y^\bq(\lambda,0) &= \int_\R e^{-\lambda x} B\Big(\frac1\lambda A_{\lambda,1} \bq - \frac1{\lambda^2} A_{\lambda,2} - x\frac1\lambda A_{\lambda,2} \bq \Big) \xi(\dd x) \\
	&= \int_\R e^{-\lambda x} (A_{
		\lambda,1} \bq - x A_{\lambda,2} \bq) \xi(\dd x),\\
	Y^\bq(\lambda,1) &= \int_\R e^{-\lambda x} B \frac1\lambda A_{\lambda,2} \bq \xi(\dd x) = \int_\R e^{-\lambda x} A_{\lambda,2} \bq \xi(\dd x).  
	\end{align*}
	In particular,
	\begin{align*}
	\Var[Y^\bq(\lambda,1)] &= \frac12\begin{pmatrix}
	0 \\
	1 \\
	1
	\end{pmatrix} \big( 2\bq_1 - \bq_2 - \bq_3 \big)^2,
	\end{align*}
	where $\bq_i$ denotes $i$th entry of $\bq$. That is,
	\begin{equation*}
	(\rho_1^\bq)^2 = \frac34 \big( 2\bq_1 - \bq_2 - \bq_3 \big)^2
	\end{equation*}
	is positive as long as $\bq$ is not orthogonal to the vector $\begin{pmatrix}2&-1&-1\end{pmatrix}^\trans$, which is the left eigenvector of $B$ corresponding to $\lambda$. In this case
	\begin{equation*}
	H_\dLambda^\bq(t) = e^{\lambda t} \bigg( \frac1\lambda  \be_{\tau(\varnothing)}^\trans A_{\lambda,1}\bq + \frac{\lambda t - 1}{\lambda^2} \be_{\tau(\varnothing)}^\trans A_{\lambda,2} \bq \bigg).
	\end{equation*}
	However, note that $H_{\dLambda}^\bq(t) = o(e^{\lambda t} t^{3/2})$ as $t\to\infty$ and this term in the scaling is irrelevant. By Theorem \ref{thm:main}, we have
	\begin{equation*}
	\frac{e^{-4 t}}{\sqrt{t^3}} \Big( \cZ_t^{\varphi_\bq} - e^{8 t} W \bu^\trans \bq \Big) \tod \Big(6( 2\bq_1 - \bq_2 - \bq_3 \big)^2 W\Big)^{1/2} \mathcal{N}.
	\end{equation*}
	
	If $\bq$ is orthogonal to $\begin{pmatrix}2&-1&-1\end{pmatrix}^\trans$, then $A_{\lambda,2} \bq = 0$ and thus
	\begin{equation*}
	H_{\dLambda}^\bq(t) = \frac{e^{\lambda t}}\lambda \be_{\tau(\varnothing)}^\trans A_{\lambda,1} \bq = o(e^{\lambda t}\sqrt{t}) \quad \text{as } t\to\infty
	\end{equation*}
	and
	\begin{equation*}
	\Var[Y^\bq(\lambda,0)] = \Var\Big[ \int_\R e^{-\lambda x} A_{\lambda,1}\bq \xi(\dd x)\Big] = \frac12 \begin{pmatrix} 0 \\ 1 \\ 1 \end{pmatrix} (\bq_2 - \bq_3)^2.
	\end{equation*}
	In particular, if $\bq_2 \neq \bq_3$, then
	\begin{equation*}
	(\rho_0^\bq)^2 = \frac34(\bq_2 - \bq_3)^2 > 0
	\end{equation*}
	and by Theorem \ref{thm:main},
	\begin{equation*}
	\frac{e^{-4 t}}{\sqrt{t}} \Big( \cZ_t^{\varphi_\bq} - e^{8 t} W \bu^\trans \bq \Big) \tod \Big(6(\bq_2 - \bq_3)^2 W\Big)^{1/2} \mathcal{N}.
	\end{equation*}
	
	Finally, if $\bq$ is a scalar multiple of $\begin{pmatrix} 1 & 1 & 1 \end{pmatrix}^\trans$, then $\rho_0^\bq = \rho_1^\bq = 0$ and we are again under the assumptions of part (i) of Theorem \ref{thm:main}. Note that in this case $H_\dLambda^\bq \equiv 0$. The constant $\sigma_\bq$ may be calculated exactly as in the previous cases, i.e.\
	\begin{equation*}
	\sigma_\bq^2 = \frac1{\alpha^2} \sum_{i=1}^3 \bu_i (\be_i^\trans A_{\alpha,1}\bq)^2 = \frac38\bq_1^2
	\end{equation*}
	and we have
	\begin{equation*}
	e^{-4 t} \Big( \cZ_t^{\varphi_\bq} - e^{8 t} W \bu^\trans \bq \Big) \tod \Big(3\bq_1^2W\Big)^{1/2} \mathcal{N}.
	\end{equation*}
	
	Observe that for any vector $\bq \in \R^3$ we proved the convergence
	\begin{equation*}
	\frac{e^{-4 t}}{\sqrt{t^3}} \Big( \cZ_t^{\varphi_\bq} - e^{8 t} W \bu^\trans \bq \Big) \tod \Big(6( 2\bq_1 - \bq_2 - \bq_3 \big)^2 W\Big)^{1/2} \mathcal{N},
	\end{equation*}
	with the right-hand side being trivial if $\bq$ is orthogonal to $\begin{pmatrix} 2 & -1 & -1 \end{pmatrix}^\trans$. By the Cram\'er--Wold device,
	\begin{equation*}
	\frac{e^{-4 t}}{\sqrt{t^3}} \Big( \cZ_t^{\varphi} - e^{8 t} W \bu^\trans \Big) \tod \big(6 W\big)^{1/2} \mathcal{N}\big(0,\Sigma\big)
	\end{equation*}
	for the covariance matrix
	\begin{equation*}
	\Sigma = \begin{pmatrix}
	4 & -2 & -2 \\
	-2 & 1 & 1 \\
	-2 & 1 & 1
	\end{pmatrix}.
	\end{equation*}
\end{enumerate}

\subsubsection{Cyclic urns}
Another example of a P\'olya urn with deterministic replacement rule is the cyclic urn. In this model, whenever a ball of type $j\in[p]$ is picked, it is returned together with one ball of type $j+1 \mod p$. That is, the replacement matrix is
\begin{equation*}
B = \begin{pmatrix}
0 & 1   \\
& 0 & 1 \\
& &\ddots & \ddots \\
&  &  & \ddots & 1 \\
1 &  &  &  & 0
\end{pmatrix}.
\end{equation*}
The limit theorems for the cyclic urns were studied in \cite{Mueller+Neininger:2018} in the discrete setting. Using Theorem \ref{thm:main}, we obtain continuous-time version of these results.

Since the characteristic polynomial of $B$ is $\chi_B(z) = (z-1)^p$, elements of $\Lambda$ are $p$th roots of unity. Denote
\begin{equation*}
\omega = e^{\frac{2\pi i}p}.
\end{equation*}
Then
\begin{equation*}
\Lambda = \{ \omega^n \, : \, n \in \{0, \dots, p-1\}, \, \Re(\omega^n) \geq 1/2\} = \{\omega^{\pm n} : n\in\{0,\dots,\lfloor p/6 \rfloor\}\}.
\end{equation*}
Note that all the roots are simple. We have $\alpha = \beta = 1$ and
\begin{equation*}
\bu = \begin{pmatrix} 1 \\ \vdots \\ 1 \end{pmatrix},
\quad
\bv = \frac1p \begin{pmatrix} 1 \\ \vdots \\ 1 \end{pmatrix}.
\end{equation*}

The matrices corresponding to the roots are given by
\begin{equation*}
A_{\omega^n,1} = \frac{\omega^n}p A_n \quad \text{for } A_n = (\omega^{n(i-j)})_{i,j\in[p]}.
\end{equation*}
Observe that each entry of $A_0$ equals $1$ and $\overline{A}_n = A_{-n}$ with the convention $A_{-n} = A_{p-n}$.

For each $n=0,\dots,p-1$ we have the martingale given by
\begin{equation*}
\begin{split}
W_t(\omega^n) &= \frac1p\sum_{u\in\Cc_t} \omega^n e^{-\omega^n S(u)} \be_{\tau(u)}^\trans A_n \\
&= \frac1p \sum_{u\in\Cc_t} e^{-\omega^n S(u)} \begin{pmatrix} \omega^{n\tau(u)} & \omega^{n(\tau(u)-1)} & \cdots & \omega^{n(\tau(u)-p+1)} \end{pmatrix} \\
&= \frac1p \sum_{u\in\Cc_t} e^{-\omega^n S(u)} \omega^{n\tau(u)} \begin{pmatrix} 1 & \omega^{-n} & \cdots & \omega^{-n(p-1)} \end{pmatrix}.
\end{split}
\end{equation*}
Observe that if $(W_t(\omega^n))_{t\geq 0}$ converges almost surely, then there exists a complex-valued random variable $\Xi_n$ such that
\begin{equation*}
\sum_{u\in\Cc_t} e^{-\omega^n S(u)} \omega^{n\tau(u)} \to \Xi_n \quad \text{a.s.}
\end{equation*}
and
\begin{equation*}
W(\omega^n) = \Xi_n \bw(n)^\trans
\end{equation*}
for the vector
\begin{equation*}
\bw(n)^\trans = \frac1p \begin{pmatrix} 1 & \omega^{-n} & \cdots & \omega^{-n(p-1)} \end{pmatrix}.
\end{equation*}
Moreover,
\begin{equation*}
W_t(1) = W_t \begin{pmatrix} 1 &\cdots & 1 \end{pmatrix} = p W_t \bw(0)^\trans,
\end{equation*}
where $(W_t)_{t\geq 0}$ is the Nerman's martingale. In view of the aforementioned properties of the matrices $A_n$,
\begin{equation*}
W_t(\omega^n) + W_t(\omega^{p-n}) = 2 \Re (W_t(\omega^n)) 
\end{equation*}
is a real vector-valued process which converges to $2 \Re(\Xi_n \bw(n)^\trans)$ for $\omega^n \in \intLambda$.

Fix a vector $\mathbf{q} \in \R^p$ and consider the characteristic $\varphi_\mathbf{q}$ as given in \eqref{eq:PolyaChar}. By \eqref{eq:Polya-mean},
\begin{equation*}
\begin{split}
m_t^{\varphi_\mathbf{q}} &= \sum_{n=0}^{p-1} \frac{e^{\omega^n t}}{\omega^{n}} A_{\omega^n,1} \mathbf{q} \1_{[0,\infty)}(t) = \frac1p \sum_{n=0}^{p-1} e^{\omega^n t} A_n \mathbf{q} \1_{[0,\infty)}(t).
\end{split}
\end{equation*}

The scaling limit depends on whether $\dLambda = \varnothing$, that is, whether $6 | p$.

\begin{enumerate}[\normalfont(i), wide]
	\item If $6 \nmid p$, then $\dLambda = \varnothing$. Denote $\Lambda^+ = \{\lambda \in \Lambda : \Im(\lambda) > 0\} = \{\omega^{n} : n\in\{1,\dots,\lfloor p/6 \rfloor\}\} $. We have
	\begin{equation*}
	H_\Lambda^\mathbf{q} = \sum_{\omega^n \in \Lambda} \frac{e^{\omega^n t}}{\omega^n} W(\omega^n) \mathbf{q} = \sum_{\omega^n \in \Lambda^+} 2  \Re\Big( e^{\omega^n t} \omega^{-n} \Xi_n \bw(n)^\trans \Big) \mathbf{q} + p W \bw(0)^\trans \mathbf{q}
	\end{equation*}
	and
	\begin{equation*}
	e^{-\frac{t}2} \big( \cZ_t^{\varphi_\mathbf{q}} - H_\Lambda^{\mathbf{q}} \big) \tod \sigma_{\mathbf{q}} W^{1/2} \mathcal{N},
	\end{equation*}
	where
	\begin{equation*}
	\sigma_\mathbf{q}^2 = \int_\R e^{-t} \bu^\trans \Var[B \xi * f^\mathbf{q}](t) \dd t
	\end{equation*}
	for the function
	\begin{equation*}
	\begin{split}
	f^\mathbf{q}(t) &= m^{\varphi_\mathbf{q}}_t - \sum_{\omega^n \in \Lambda} e^{\omega^n t} \frac{A_{\omega^n,1}}{\omega^n} \mathbf{q} \\
	&= \frac1p  \sum_{n=\lfloor p/6 \rfloor + 1}^{ p - \lfloor p/6 \rfloor - 1}  e^{\omega^n t} A_{n} \mathbf{q}  \1_{[0,\infty)}(t)
	- \frac1p \sum_{n = -\lfloor p/6 \rfloor }^{\lfloor p/6 \rfloor} e^{\omega^n t} A_{n} \mathbf{q}  \1_{(-\infty,0)}(t).
	\end{split}
	\end{equation*}
	Recall that by \eqref{eq:Polya-matrixRec}, for any $n = 0,\dots,p-1$ we have
	$B A_{n} = \omega^n A_{n}$. Moreover, for any $t\in\R$, the random measures $\xi((-\infty,t))$ and $\xi([t,\infty))$ are independent, therefore
	\begin{equation*}
	\begin{split}
	\Var[B\xi * f^\mathbf{q}](t) &= \Var\Big[\int_0^t \frac1p \sum_{n=\lfloor p/6 \rfloor + 1}^{ p - \lfloor p/6 \rfloor - 1}  e^{\omega^n (t-s)} \omega^n A_{n} \mathbf{q} \xi(\dd s) \Big] \\
	&+ \Var \Big[ \int_t^\infty \frac1p \sum_{n = -\lfloor p/6 \rfloor }^{\lfloor p/6 \rfloor} e^{\omega^n (t-s)} \omega^n A_{n}\mathbf{q}  \xi(\dd s) \Big].
	\end{split}
	\end{equation*}
	Observe that for any $j\in[p]$, $\omega^n \be_j^\trans A_n = p \omega^{nj} \bw(n)^\trans$, therefore
	\begin{equation*}
	\begin{split}
	\be_j^\trans \Var[B\xi * f^\mathbf{q}](t) &= \Var\Big[\int_0^t \frac1p \sum_{n=\lfloor p/6 \rfloor + 1}^{ p - \lfloor p/6 \rfloor - 1} e^{\omega^n (t-s)} \omega^n \be_j^\trans A_{n} \mathbf{q} \xi(\dd s) \Big] \\
	&+ \Var \Big[ \int_t^\infty \frac1p \sum_{n = -\lfloor p/6 \rfloor }^{\lfloor p/6 \rfloor} e^{\omega^n (t-s)} \omega^n \be_j^\trans A_{n}\mathbf{q}  \xi(\dd s) \Big] \\
	&= \int_0^t \Big|\sum_{n=\lfloor p/6 \rfloor + 1}^{ p - \lfloor p/6 \rfloor - 1} e^{\omega^n (t-s)} \omega^{nj} \bw(n)^\trans \mathbf{q}\Big|^2 \dd s \\
	&+\int_{t\vee 0}^\infty \Big|\sum_{n = -\lfloor p/6 \rfloor }^{\lfloor p/6 \rfloor} e^{\omega^n (t-s)} \omega^{nj} \bw(n)^\trans \mathbf{q}\Big|^2 \dd s
	\end{split}
	\end{equation*}
	thus
	\begin{equation*}
	\begin{split}
	\sigma_\mathbf{q}^2 &= \int_\R e^{-t} \sum_{j=1}^p \be_j^\trans \Var[B\xi * f^\mathbf{q}](t) \dd t \\
	&=  \int_0^\infty \int_s^\infty e^{-t} \sum_{j=1}^p \Big|\sum_{n=\lfloor p/6 \rfloor + 1}^{ p - \lfloor p/6 \rfloor - 1} e^{\omega^n (t-s)} \omega^{nj} \bw(n)^\trans \mathbf{q}\Big|^2 \dd t \dd s \\
	& + \int_0^\infty \int_{-\infty}^s e^{-t} \sum_{j=1}^p \Big|\sum_{n = -\lfloor p/6 \rfloor }^{\lfloor p/6 \rfloor} e^{\omega^n (t-s)} \omega^{nj} \bw(n)^\trans \mathbf{q}\Big|^2 \dd t \dd s.
	\end{split}
	\end{equation*}
	The roots of unity are orthogonal in the sense that for any $n,k=0,\dots,p-1$,
	\begin{equation*}
	\sum_{j=1}^p \omega^{nj} \overline{\omega}^{kj} = p \1_{\{k=n\}}.
	\end{equation*}
	In particular, when expanding the squares in the above formula, all the cross-terms vanish and thus
	\begin{equation*}
	\begin{split}
	\sigma_\mathbf{q}^2 &= p \int_0^\infty \int_s^\infty e^{-t} \sum_{n=\lfloor p/6 \rfloor + 1}^{ p - \lfloor p/6 \rfloor - 1} e^{2\Re(\omega^n) (t-s)} |\bw(n)^\trans \mathbf{q}|^2 \dd t \dd s \\
	& + p \int_0^\infty \int_{-\infty}^s e^{-t} \sum_{n = -\lfloor p/6 \rfloor }^{\lfloor p/6 \rfloor} e^{2\Re(\omega^n) (t-s)} |\bw(n)^\trans \mathbf{q}|^2 \dd t \dd s \\
	& = p \sum_{n=0}^{ p-1}  \frac{|\bw(n)^\trans \mathbf{q}|^2}{|1 - 2\Re(\omega^n)|},
	\end{split}
	\end{equation*}
	which gives
	\begin{multline*}
	e^{-\frac{t}2} \Big( \cZ_t^{\varphi} - p W \bw(0)^\trans - \sum_{\omega^n \in \Lambda^+} 2  \Re\big( e^{\omega^n t} \omega^{-n} \Xi_n \bw(n)^\trans \big) \Big) \mathbf{q}\\ \tod (pW)^{1/2} \mathcal{N}\Big(0, \sum_{n=0}^{ p-1}  \frac{|\bw(n)^\trans \mathbf{q}|^2}{|1 - 2\Re(\omega^n)|}  \Big).
	\end{multline*}
	Since $\mathbf{q}$ is arbitrary, this implies
	\begin{equation*}
	e^{-\frac{t}2} \Big( \cZ_t^{\varphi} - p W \bw(0)^\trans - \sum_{\omega^n \in \Lambda^+} 2  \Re\big( e^{\omega^n t} \omega^{-n} \Xi_n \bw(n)^\trans \big) \Big) \tod (pW)^{1/2} \mathcal{N}\big(0, \Sigma \big)
	\end{equation*}
	for the covariance matrix
	\begin{equation*}
	\Sigma = \sum_{n=0}^{p-1} \frac1{|1 - 2\Re(\omega^n)|} \bw(n) \bw(n)^*.
	\end{equation*}
	
	\item If $6|p$, then $\dLambda = \{\omega^r, \omega^{-r}\}$, where $r = p/6$. Denote $\Lambda^+ = \{\lambda \in \intLambda : \Im(\lambda) > 0\} = \{\omega^{n} : n\in\{1,\dots, r -1\}\} $. We have
	\begin{align*}
	H_\Lambda^\mathbf{q} &= \sum_{\omega^n \in \intLambda} \frac{e^{\omega^n t}}{\omega^n} W(\omega^n) \mathbf{q} = \sum_{\omega^n \in \Lambda^+} 2  \Re\Big( e^{\omega^n t} \omega^{-n} \Xi_n \bw(n)^\trans \Big) \mathbf{q} + p W \bw(0)^\trans \mathbf{q}, \\
	H_\dLambda^\mathbf{q}(t) &= \frac1p \Big( e^{\omega^r t} \be_{\tau(\varnothing)}A_r  + e^{\omega^{-r} t} \be_{\tau(\varnothing)}A_{p-r} \Big) \mathbf{q}
	= 2\Re\big( e^{\omega^r t} \omega^{r(\tau(\varnothing)-1)}\bw(r)^\trans \big)\mathbf{q}.
	\end{align*}
	The vectors $Y(\omega^r,0), Y(\omega^{-r},0)$ are of the form
	\begin{align*}
	Y(\omega^r,0)
	&=  \Lap\bxi(\omega^r) \frac{\omega^r}p A_r \mathbf{q}, \\
	Y(\omega^{-r},0) &= \Lap\bxi(\omega^{-r}) \frac{\omega^{-r}}p A_{-r} \mathbf{q}.
	\end{align*}
	Observe that $\Var[\Lap\xi(\omega^r)] = (2\Re(\omega^r))^{-1} = 1$. That is, for any $j\in[p]$,
	\begin{equation*}
	\be_j^\trans \Var\Big[Y(\omega^r,0) \Big] = \Big|\frac{\omega^r}p \be_j^\trans B  A_r \mathbf{q} \Big|^2 \Var[\Lap\xi(\omega^r)] = \big|\bw(r)^\trans \mathbf{q}\big|^2
	\end{equation*}
	and similarly
	\begin{equation*}
	\be_j^\trans \Var\big[Y(\omega^{-r},0) \big] = \big|\bw(p-r)^\trans \mathbf{q}\big|^2.
	\end{equation*}
	
	Note that if $\mathbf{q}$ is orthogonal to $\bw(r)$, then the above variances vanish. However, in this case $H_\dLambda^\mathbf{q} \equiv 0$ and we are back in the setting of case (i). That is,
	\begin{equation*}
	\frac{e^{-\frac{t}2}}{\sqrt{t}}\big(\cZ_t^{\varphi_\mathbf{q}} - H_\Lambda^\mathbf{q}\big) \tod 0.
	\end{equation*}
	If $\mathbf{q}$ is not orthogonal to $\bw(r)$, then $n=0$ and
	\begin{equation*}
	\rho_0^2 = p\Big(\big|\bw(r)^\trans \mathbf{q}\big|^2 + \big|\bw(p-r)^\trans \mathbf{q}\big|^2\Big) \neq 0.
	\end{equation*}
	Note that $(t e^{t})^{-1/2}|H_\dLambda^\mathbf{q}(t)| \to 0$ as $t\to\infty$, therefore
	\begin{equation*}
	\frac{e^{-\frac{t}2}}{\sqrt{t}}\big(\cZ_t^{\varphi_\mathbf{q}} - H_\Lambda^\mathbf{q}\big) \tod (pW)^{1/2} \mathcal{N}\Big(0, \big|\bw(r)^\trans \mathbf{q}\big|^2 + \big|\bw(p-r)^\trans \mathbf{q}\big|^2 \Big).
	\end{equation*}
	
	By the Cram\'er--Wold device,
	\begin{equation*}
	\frac{e^{-\frac{t}2}}{\sqrt{t}} \Big( \cZ_t^{\varphi} - p W \bw(0)^\trans - \sum_{\omega^n \in \Lambda^+} 2  \Re\big( e^{\omega^n t} \omega^{-n} \Xi_n \bw(n)^\trans \big) \Big) \tod (pW)^{1/2} \mathcal{N}\big(0, \Sigma \big)
	\end{equation*}
	for the covariance matrix
	\begin{equation*}
	\Sigma = \bw(r) \bw(r)^* + \bw(p-r) \bw(p-r)^*.
	\end{equation*}
	
\end{enumerate}

\subsection{The elephant random walk}\label{sec:ERW}

The one-dimensional, nearest-neighbour elephant random walk was introduced in \cite{Schutz+Trimper:2004} in order to study the influence of long-range memory effects on asymptotic properties of random walks. The model was later generalised, both by considering different step distribution \cite{Businger:2018, Kiss+Veto:2022}, non-uniform memory \cite{Laulin:2022}, and a walk on higher dimensional lattice \cite{Bercu+Laulin:2019}. In what follows, we consider a multi-dimensional elephant random walk (MERW) as introduced in \cite{Bercu+Laulin:2019}.

Fix $d \in \N$ and consider an elephant performing a nearest-neighbour random walk on $\Z^d$. It starts at the origin and makes the first step according to some fixed distribution $\nu$. From there on, the elephant's moves depend on its past. At $n$th step, it samples uniformly one of its previous $n-1$ steps and moves in the same direction with probability $p$ or in one of the remaining directions, uniformly with probability $(1-p)/(2d-1)$ for some parameter $p\in(0,1)$. Observe that this process may be modelled by a P\'olya urn: the urn is the elephant's memory, balls of type $i \in [2d]$ are its steps in direction $i$, i.e.\ along $\be_k$ if $i=2k$ and along $-\be_k$ if $i=2k-1$. The urn contains initially one ball, whose type is chosen according to $\nu$. When a ball is picked from the urn, it is put back together with a ball of the same type with probability $p$ or one of the other types, each with probability $(1-p)/(2d-1)$. The continuous-time P\'olya urn corresponds to the elephant taking steps at exponential rate.

Let $Y_t$ be the position of the elephant at time $t \geq 0$. With the help of Theorem \ref{thm:main} one obtains a central limit theorem for $(Y_t)_{t\geq 0}$. In the discrete-time setting, such a theorem was proved in the case $d=1$ in \cite{Coletti+al:2017, Kubota+Takei:2019}. For $d > 1$, a functional central limit theorem was obtained in \cite{Bertenghi:2022} under the diffusive and critical regime. The Gaussian fluctuations in the superdiffusive regime were studied in \cite{Bercu:2025} for a lazy walk.

We proceed with defining an appropriate CMJ model. Let $\xi = \sum_{n\in\N} \delta_{T_n}$ be a homogeneous Poisson point process on $[0,\infty)$ with intensity $1$ and let $(X_n)_{n\in\N}$ be a family of i.i.d.\ random variables independent of $\xi$ and such that
$$\P[X_1 = 0] = p, \qquad \P[X_1 = i] = \frac{1-p}{2d-1}, \quad i=1,\dots,2d-1.$$
Further, let $\epsilon_{ij}(n) = \1_{\{X_n = i - j \mod 2d\}}$ for $i,j\in [2d]$ and define
\begin{equation*}
\bxi = \bigg( \sum_{n\in\N} \epsilon_{ij}(n)\delta_{T_n}\bigg)_{i,j \in [2d]}.
\end{equation*}
Observe that for every $i \in [2d]$,
$$(\xi^{ij})_{j\in[2d]} = \bigg( \sum_{n\in\N} \epsilon_{ij}(n)\delta_{T_n}\bigg)_{j\in [2d]}$$
is a coloured Poisson point process (c.f.\ \cite[Chapter 5]{Kingman:1993}). In particular, $\xi^{ij}$ is a homogeneous Poisson point process on $[0,\infty)$ with intensity $p$ if $i=j$ and $(1-p)/(2d-1)$ otherwise, and the processes $\xi^{ij}, j\in[2d]$ are independent.

The position of the elephant at time $t\geq 0$ is given by
$$ Y_t = ( \cZ_t^{\varphi_1}, \cdots, \cZ_t^{\varphi_d})$$
for characteristics
$$ \varphi_k(t) = (\be_{2k} - \be_{2k-1})\1_{[0,\infty)}(t), \quad k\in[d]. $$
Therefore, in order to obtain a central limit theorem for the vector $Y_t$ as $t\to\infty$, we may use Theorem \ref{thm:main} applied to characteristic
\begin{equation}\label{eq:elephant-char}
\varphi_\bq(t) = \sum_{k=1}^d \bq_k (\be_{2k} - \be_{2k-1}) \1_{[0,\infty)}(t)
\end{equation}
for an arbitrary vector $\bq = (\bq_k)_{k\in[d]} \in \R^{d}$, together with the Cram\'er--Wold device.

The intensity measure of $\bxi$ is given by $\bmu[0,t] = Bt$, where
\begin{equation*}
B = \begin{pmatrix}
p & \frac{1-p}{2d-1} &  \cdots & \frac{1-p}{2d-1} \\
\frac{1-p}{2d-1} & p &  \ddots & \vdots \\
\vdots & \ddots & \ddots & \frac{1-p}{2d-1}  \\
\frac{1-p}{2d-1}  & \cdots & \frac{1-p}{2d-1} & p
\end{pmatrix}
\end{equation*}
is a $2d \times 2d$ matrix with value $p$ on the diagonal and $(1-p)/(2d-1)$ everywhere else. Since
\begin{equation*}
\Lap\bmu(z) = \frac1z B,
\end{equation*}
a direct calculation shows that
\begin{equation*}
(I - \Lap\bmu(z))^{-1} = \frac{z}{\big(z-1\big)\big(z - \frac{2pd-1}{2d-1} \big)}
\begin{pmatrix}
z - \frac{p+2d -2}{2d-1} & \frac{1-p}{2d-1} & \cdots & \frac{1-p}{2d-1} \\
\frac{1-p}{2d-1} & z - \frac{p+2d -2}{2d-1} & \ddots &  \vdots \\
\vdots & \ddots & \ddots & \frac{1-p}{2d-1} \\
\frac{1-p}{2d-1}  & \cdots & \frac{1-p}{2d-1} & z - \frac{p+2d -2}{2d-1}
\end{pmatrix}.
\end{equation*}
In particular, regardless of the dimension $d$, there are only two roots
\begin{equation*}
\alpha \coloneqq 1, \quad \lambda \coloneqq \frac{2pd-1}{2d-1},
\end{equation*}
each of multiplicity one. The corresponding matrices are
\begin{equation*}
A_{\alpha,1} = \frac1{2d} \begin{pmatrix} 1 & \cdots & 1 \\ \vdots & & \vdots \\ 1 & \cdots & 1 \end{pmatrix}, \quad
A_{\lambda,1} = \frac{2pd - 1}{2d} \begin{pmatrix}
1 & -\frac1{2d-1} & \cdots & -\frac1{2d-1} \\
-\frac1{2d-1} & 1 & \ddots & \vdots \\
\vdots & \ddots & \ddots & -\frac1{2d-1} \\
-\frac1{2d-1} & \cdots & -\frac1{2d-1} & 1
\end{pmatrix},
\end{equation*}
thus the martingales take the form
\begin{align*}
W_t(\alpha) &= \frac1{2d}\sum_{u\in\Cc_t} e^{-S(u)} \begin{pmatrix} 1 & \cdots & 1 \end{pmatrix}, \\
W_t(\lambda) &= \frac{2pd-1}{2d} \sum_{u\in\Cc_t} e^{-\lambda S(u)} \bw(\tau(u))^\trans,
\end{align*}
where for $i\in[2d]$, $\bw(i) \in \R^{2d}$ is a vector with value $1$ on $i$th coordinate and $-1/(2d-1)$ everywhere else. Observe that if $(W_t(\lambda))_{t\geq 0}$ converges almost surely to $W(\lambda)$, then for every $k \in [d]$,
\begin{equation*}
\sum_{u\in\Cc_t} e^{-\lambda S(u)} (\1_{\{\tau(u) = 2k\}} - \1_{\{\tau(u)=2k-1\}}) = \frac{2d-1}{2d} \sum_{u\in\Cc_t} e^{-\lambda S(u)} \bw(\tau(u))^\trans (\be_{2k} - \be_{2k-1})
\end{equation*}
converges almost surely as $t\to\infty$ to a random variable
\begin{equation}\label{eq:elephant-lims}
\Xi_k \coloneqq \frac{2d-1}{2pd-1} W(\lambda) (\be_{2k} - \be_{2k-1}).
\end{equation}

The properly scaled left and right eigenvectors $\bu^\trans,\, \bv$ of $\Lap\bmu(\alpha)$ are
\begin{equation*}
\bu = \begin{pmatrix} 1 \\ \vdots \\ 1 \end{pmatrix}, \quad \bv = \frac1{2d} \begin{pmatrix} 1 \\ \vdots \\ 1 \end{pmatrix}.
\end{equation*}
In particular, $\beta = 1$ and the Nerman's martingale is given by
\begin{equation*}
W_t = \frac1{2d} \sum_{u\in\Cc_t} e^{-S(u)},
\end{equation*}
so that $W_t(\alpha) = \begin{pmatrix} 1 & \cdots & 1 \end{pmatrix} W_t $. Finally, the mean of the process counted with characteristic $\varphi_\bq$ as in \eqref{eq:elephant-char} may be calculated explicitly with the help of \eqref{eq:Polya-mean}. We have
\begin{equation*}
m_t^{\varphi_\bq} = e^{\lambda t} \sum_{k=1}^d \bq_k(\be_{2k} - \be_{2k-1}).
\end{equation*}

The shape of the limit theorem depends on whether $\lambda > \frac\alpha2$, i.e., whether $p > \frac{2d+1}{4d}$.
\begin{enumerate}[\normalfont(i), wide]
	\item The diffusive regime: $p < \frac{2d+1}{4d}$. In this case $\Lambda = \{\alpha\}$ and $\dLambda = \varnothing$. Observe that for any vector $\bq \in\R^{d}$,
	\begin{equation*}
	H_\Lambda^\bq(t) = e^t W(\alpha) \int_\R e^{-s} \varphi_\bq(s) \dd s = e^t W \sum_{k=1}^d \bq_k \begin{pmatrix} 1 & \cdots & 1 \end{pmatrix} (\be_{2k} - \be_{2k-1}) \equiv 0.
	\end{equation*}
	By Theorem \ref{thm:main},
	\begin{equation*}
	e^{-\frac{t}2} \cZ_t^{\varphi_\bq} \tod \sigma_\bq W^{1/2} \mathcal{N},
	\end{equation*}
	where
	\begin{equation*}
	\sigma_\bq^2 = \int_\R e^{-t} \bu^\trans \Var[\bxi * f^\bq](t) \dd t
	\end{equation*}
	for the function $ f^\bq(t) = e^{\lambda t} \sum_{k=1}^d \bq_k(\be_{2k} - \be_{2k-1})\1_{[0,\infty)}(t)$.
	
	Observe that for any $i\in[2d]$ and  $t \geq 0$, $i$th entry of the vector $\Var[\bxi*f^\bq](t)$ equals
	\begin{equation*}
	\be_i^\trans \Var[\bxi*f^\bq](t) = \Var\Big[ \sum_{k=1}^d \bq_k (\xi^{i,2k} * g(t) - \xi^{i,2k-1} * g(t)) \Big],
	\end{equation*}
	where $g(t) = e^{\lambda t}$. Using the fact that $(\xi^{ij})_{j\in[2d]}$ is a coloured Poisson point process, we obtain
	\begin{equation*}
	\begin{split}
	\be_i^\trans \Var[\bxi*f^\bq](t) &= e^{2\lambda t} \sum_{k=1}^d \bq_k^2 \bigg(\Var \Big[ \int_0^t e^{-\lambda s} \xi^{i,2k}(\dd s)\Big] + \Var \Big[\int_0^t e^{-\lambda s} \xi^{i,2k-1}(\dd s)\Big] \bigg) \\
	&= \frac{e^{2\lambda t} - 1}{2\lambda} \sum_{k=1}^d \bq_k^2 \bigg( \frac{2(1-p)}{2d-1} \1_{\{i \notin \{2k-1,2k\}\}} + \Big(p + \frac{1-p}{2d-1}\Big) \1_{\{i \in \{2k-1,2k\}\}} \bigg).
	\end{split}
	\end{equation*}
	Therefore,
	\begin{equation*}
	\begin{split}
	\sigma^2_\bq &= \int_0^\infty \frac{e^{(2\lambda-1)t} - e^{-t}}{2\lambda} \dd t \sum_{k=1}^d \bq_k^2 \Big((2d-2) \frac{2(1-p)}{2d-1} + 2\Big(p + \frac{1-p}{2d-1} \Big) \Big) \\
	&= \frac{2(2d-1)}{2d-4pd+1} \sum_{k=1}^d \bq_k^2.
	\end{split}
	\end{equation*}
	Since $\bq$ is arbitrary, this implies
	\begin{equation*}
	e^{-\frac{t}2} Y_t \tod W^{1/2} \mathcal{N}\Big(0, \frac{2(2d-1)}{2d-4pd+1} \Id \Big).
	\end{equation*}
	\item The superdiffusive regime: $p> \frac{2d + 1}{4d}$. In this case $\Lambda = \{\alpha,\lambda\}$ and
	\begin{equation*}
	H_\Lambda^\bq(t) = e^{\lambda t} \frac1{\lambda} W(\lambda) \sum_{k=1}^d \bq_k(\be_{2k} - \be_{2k-1}) = e^{\lambda t} \sum_{k=1}^d \bq_k \Xi_k
	\end{equation*}
	for $\Xi_k$ given by \eqref{eq:elephant-lims}. Theorem \ref{thm:main} implies
	\begin{equation*}
	e^{-\frac{t}2} \Big( \cZ_t^{\varphi_\bq} - e^{\lambda t} \sum_{k=1}^d \bq_k \Xi_k \Big) \tod \sigma_\bq W^{1/2} \mathcal{N}.
	\end{equation*}
	In this case
	\begin{equation*}
	f^\bq(t) = -e^{\lambda t} \sum_{k=1}^d \bq_k(\be_{2k} - \be_{2k-1})\1_{(-\infty,0)}(t)
	\end{equation*}
	and a calculation analogous to that in (i) gives
	\begin{equation*}
	\sigma_\bq^2 = \frac{2(2d-1)}{4pd - 2d-1} \sum_{k=1}^d \bq_k^2.
	\end{equation*}
	By the Cram\'er--Wold device,
	\begin{equation*}
	e^{-\frac{t}2} \big(Y_t - e^{\lambda t}\Xi\big) \tod W^{1/2} \mathcal{N}\Big(0, \frac{2(2d-1)}{4pd - 2d-1} \Id \Big)
	\end{equation*}
	for the vector $\Xi = (\Xi_j)_{j\in[d]}$.
	\item The critical regime: $p = \frac{2d + 1}{4d}$. In this case $\lambda = \frac12$ lies on the critical line. We have $H_\Lambda^\bq(t) \equiv 0$ and
	\begin{equation*}
	H_\dLambda^\bq(t) = e^{\frac{t}2} \frac{2pd - 1}{2d} \sum_{k=1}^d \bq_k \bw(\tau(\varnothing))^\trans (\be_{2k} - \be_{2k-1}) = o\big(e^{\frac{t}2}\sqrt{t}\big).
	\end{equation*}
	The vector $Y(\lambda,0)$ is of the form
	\begin{equation*}
	Y(\lambda,0) = \int e^{-\lambda x} \bxi(\dd x) \sum_{k=1}^d \bq_k(\be_{2k} - \be_{2k-1}).
	\end{equation*}
	That is, its $i$th coordinate, $i\in[2d]$, is given by
	\begin{equation*}
	\be_i^\trans Y(\lambda,0) = \sum_{k=1}^d \bq_k \Big( \int e^{-\lambda x} \xi^{i,2k}(\dd x) - \int e^{-\lambda x} \xi^{i,2k-1}(\dd x) \Big),
	\end{equation*}
	therefore, since $(\xi^{ij})_{j\in[2d]}$ is a coloured Poisson point process,
	\begin{equation*}
	\be_i^\trans \Var[Y(\lambda,0)] = \sum_{k=1}^d \frac{\bq_k^2}{2\lambda} \bigg( \frac{2(1-p)}{2d-1} \1_{\{i \notin \{2k-1,2k\}\}} + \Big(p + \frac{1-p}{2d-1}\Big) \1_{\{i \in \{2k-1,2k\}\}} \bigg),
	\end{equation*}
	which gives
	\begin{equation*}
	\rho_0^2 = 2 \sum_{k=1}^d \bq_k^2.
	\end{equation*}
	In particular, $\rho_0^2 > 0$ as long as $\bq$ is a non-zero vector. Theorem \ref{thm:main} then implies
	\begin{equation*}
	\frac{e^{-\frac{t}2}}{\sqrt{t}} \cZ_t^{\varphi_\bq} \tod \Big( 2 \sum_{k=1}^d \bq_k^2 W\Big)^{1/2} \mathcal{N},
	\end{equation*}
	which leads to
	\begin{equation*}
	\frac{e^{-\frac{t}2}}{\sqrt{t}} Y_t \tod W^{1/2} \mathcal{N}\big(0, 2 \Id\big).
	\end{equation*}
\end{enumerate}

\subsection{m-ary search trees}\label{sec:m-aryST} The following branching process is used to model space requirement in searching and sorting algorithms in computer science. For $m \geq 3$, consider a $m-1$-type branching process with the following reproduction rule: a particle of type $j$ lives for an exponential time with rate $j$, and upon death produces one particle of type $j+1$ if $j<m-1$ and $m$ particles of type $1$ if $j=m-1$. In the Markov theory this process is usually described by a P\'olya urn model in which the rate at which the ball is drawn depends on its colour and the ball is not returned to the urn after it was drawn. In particular, the configuration of the balls forms a Markov process with the infinitesimal generator
\begin{equation*}
M = \begin{pmatrix}
-1 & 1 & \\
& -2 & 2 \\
&&\ddots & \ddots \\
&&& -(m-2) & m-2 \\
m(m-1) &&&& -(m-1) 
\end{pmatrix}.
\end{equation*}

Our approach, however, is to give a description in terms of a CMJ process with a characteristic that takes into account the death of particles. That is, we consider the reproduction process
\begin{equation*}
\bxi = \begin{pmatrix}
0 & \delta_{E_1}  \\
& 0 & \delta_{E_2} \\
&& \ddots & \ddots \\
&&& 0 & \delta_{E_{m-2}} \\
m\delta_{E_{m-1}} &&&& 0
\end{pmatrix},
\end{equation*}
where $E_j \sim \mathcal{E}xp(j)$ is a random variable with density $x\mapsto je^{-jx}\1_{(0,\infty)}(x)$. The configuration of particles at time $t$ is given by $\cZ_t^\varphi$ with the random characteristic
\begin{equation*}
\varphi(t) = \mathrm{diag}(\1_{[0,E_1)}(t), \1_{[0,E_2)}(t), \dots, \1_{[0,E_{m-1})}(t)).
\end{equation*}
As in the previous examples, the limit theorem for $\cZ^\varphi$ follows from Theorem \ref{thm:main} applied to the projection characteristics
\begin{equation}\label{eq:maryChar}
\varphi_\bq(t) = (\bq_j \1_{[0,E_j)}(t))_{j\in[m-1]},
\end{equation}
where $\bq = (\bq_j)_{j\in[m-1]} \in \R^{m-1}$.

Observe that
\begin{equation*}
\Lap\bmu(z) = \begin{pmatrix}
0 & \frac1{z+1} \\
& 0 & \frac2{z+2} \\
& & \ddots \\
\frac{m(m-1)}{z+m-1} &&& 0
\end{pmatrix}.
\end{equation*}
In particular,
\begin{equation*}
\det(\I_{m-1} - \Lap\bmu(z)) = \frac{\prod_{j=1}^{m-1} (z+j) - m!}{\prod_{j=1}^{m-1} (z+j)}.
\end{equation*}
That is, $\Lambda$ consists of roots of the polynomial
\begin{equation*}
w_m(z) = \prod_{j=1}^{m-1} (z+j) - m!,
\end{equation*}
which is the characteristic polynomial of $M$. It is known \cite{Chern+Hwang:2001}, that all the roots of $w_m$ are simple and $1$ is the root with largest real value, i.e.\ $\alpha = 1$. Moreover, the only other real root is $-m-1$ if $m$ is odd; if $m$ is even, all the other roots are non-real. Most importantly, there exist roots different from $1$ with real part larger than $1/2$ if and only if $m \geq 27$. Let $\Lambda_w$ denote the set of all roots of $w_m$, so that $\Lambda = \{\lambda \in \Lambda_w : \Re\lambda \geq 1/2\}$.

\begin{figure}[h!]\label{fig:m-ary}
	\centering
	\begin{subfigure}{.46\textwidth}
		\centering
		\includegraphics[width=\linewidth]{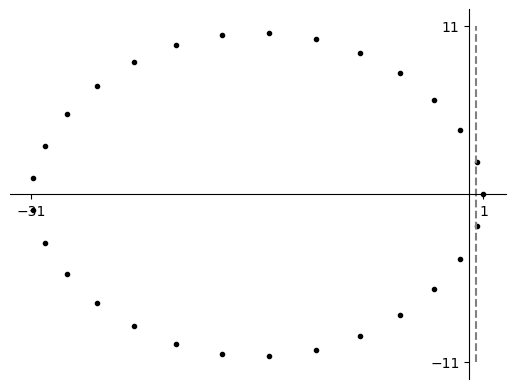}
		\phantomcaption{\footnotesize $m=30$: $3$ roots behind the critical line.}
	\end{subfigure}
	\begin{subfigure}{.46\textwidth}
		\centering
		\includegraphics[width=\linewidth]{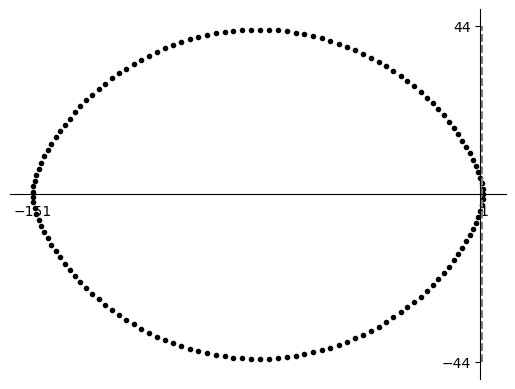}
		\phantomcaption{\footnotesize $m=150$: $5$ roots behind the critical line.}
	\end{subfigure}
	\caption{Numeric approximation of the roots of $w_m$. The dashed line is the critical line $\Re(\lambda) = 1/2$.}
\end{figure}

Observe that the right and left eigenvectors $\bv,\, \bu^\trans$ of $\Lap\bmu(1)$ are 
\begin{equation*}
\bv = \frac2{m(m-1)} \begin{pmatrix} 1 \\ 2 \\ \vdots \\ m-1 \end{pmatrix}, \quad  \bu = \frac{m}2 \begin{pmatrix} 1 \\ \frac12 \\ \vdots \\ \frac1{m-1} \end{pmatrix}.
\end{equation*}
In particular,
\begin{equation*}
\beta = \frac1{m-1} \sum_{k=2}^m \frac1k.
\end{equation*}

For any $\lambda \in \Lambda$, we have $k_\lambda = 1$.
Instead of analysing the exact form of $A_{\lambda,1}$ by expanding the function $z \mapsto (\I_{m-1} - \Lap\bmu(z))^{-1}$ around its poles, let us note that in this case we may extend Remark \ref{rem:NermanviaA} to other roots as well. Namely, recall that by \eqref{eq:matrixRec}, columns of $A_{\lambda,1}$ are right eigenvectors of $\Lap\bmu(\lambda)$ corresponding to the eigenvalue $1$ and the rows are left eigenvectors. However, the characteristic polynomial of $\Lap\bmu(\lambda)$ may be easily computed to be
\begin{equation*}
x \mapsto (-x)^{m-1} + (-1)^m \frac{m!}{(\lambda+1)\dots(\lambda + m-1)} = (-1)^{m-1} (x^{m-1} - 1),
\end{equation*}
where we have used the fact that $\lambda$ is a root of $w_m$. In particular, $\Lap\bmu(\lambda)$ has $m-1$ distinct eigenvalues, those being the roots of unity, and each of its eigenspaces is one-dimensional. Denote by $\bu^\trans(\lambda), \bv(\lambda)$ the left and right eigenvectors of $\Lap\bmu(\lambda)$ corresponding to the eigenvalue $1$. Note that we may choose the vectors such that $\bv(1) = \bv, \ \bu(1) = \bu$, $\bu(\lambda)^\trans \bv(\lambda) = 1$ for all roots $\lambda$ and $\bv(\overline{\lambda}) = \overline{\bv(\lambda)}$, $\bu(\overline{\lambda}) = \overline{\bu(\lambda)}$ for non-real $\lambda$. Then each column of $A_{\lambda,1}$ must be a scalar multiple of $\bv(\lambda)$ and each row a scalar multiple of $\bu(\lambda)$, i.e.\
\begin{equation*}
A_{\lambda,1} = \frac1{\beta_\lambda}\bv(\lambda) \bu(\lambda)^\trans
\end{equation*}
for some $\beta_\lambda\neq 0$. The eigenvectors may be written explicitly: for some non-zero constants $c_\lambda$,
\begin{equation*}
\bv(\lambda) = c_\lambda \bigg(\frac{\prod_{k=1}^{j-1} (\lambda + k)}{(j-1)!}\bigg)_{j\in[m-1]}, \quad \bu(\lambda) = \frac1{c_\lambda (m-1)} \bigg(\frac{(j-1)!}{\prod_{k=1}^{j-1} (\lambda + k)}\bigg)_{j\in[m-1]}. 
\end{equation*}
Proceeding as in Remark \ref{rem:NermanviaA}, we conclude that
\begin{equation*}
\beta_\lambda = \bu(\lambda)^\trans (-(\Lap\bmu)'(\lambda))\bv(\lambda) 
= \frac1{m-1}\sum_{k=1}^{m-1}\frac1{\lambda+k}.
\end{equation*}
Finally, it may be checked by direct calculation that $\bv(\lambda)$ is a right eigenvector of $M$ corresponding to the eigenvalue $\lambda$. Note that this construction of $A_{\lambda,1}$ may be performed for all roots of the polynomial $w_m$, not necessarily lying in $\Lambda$.

The martingale associated to $\lambda \in \Lambda$ is of the form
\begin{equation*}
W_t(\lambda) = \sum_{u\in\Cc_t} e^{-\lambda S(u)} \be_{\tau(u)}^\trans A_{\lambda,1} = \frac1{\beta_\lambda}\sum_{u\in\Cc_t} e^{-\lambda S(u)} \be_{\tau(u)}^\trans \bv(\lambda) \bu(\lambda)^\trans.
\end{equation*}
If it converges a.s., then
\begin{equation*}
\sum_{u\in\Cc_t} e^{-\lambda S(u)} \be_{\tau(u)}^\trans \bv(\lambda) \xrightarrow{t\to\infty} \Xi(\lambda)
\end{equation*}
almost surely for a complex-valued random variable $\Xi(\lambda)$ such that $W(\lambda) = \beta_\lambda^{-1}\Xi(\lambda) \bu(\lambda)^\trans$. In particular, $\Xi(\overline{\lambda}) = \overline{\Xi(\lambda)}$. Note that $\Xi(1)$ is the limit of Nerman's martingale.

Since the process is Markovian, the mean of the configuration vector may be calculated explicitly by exponentiating the generator $M$. This gives
\begin{equation*}
\Ev[\cZ_t^\varphi] = \sum_{\lambda\in\Lambda_w} \frac{e^{\lambda t}}{\beta_\lambda}  \bv(\lambda) \bw(\lambda)^\trans,
\end{equation*}
where, for each root $\lambda \in \Lambda_w$,
\begin{equation*}
\bw(\lambda) = \frac1{c_\lambda(m-1)}\bigg(\frac{(j-1)!}{\prod_{k=1}^j(\lambda+k)}\bigg)_{j\in[m-1]}
\end{equation*}
is the left eigenvector of $M$ corresponding to $\lambda$ such that $\beta_\lambda^{-1}\bw(\lambda)^\trans \bv(\lambda) = 1$.

Recall that we consider the characteristic given by \eqref{eq:maryChar}. The above formula gives
\begin{equation*}
\Ev[\cZ_t^{\varphi_\bq}] = \sum_{\lambda\in\Lambda_w} \frac{e^{\lambda t}}{\beta_\lambda} \bv(\lambda) \bw(\lambda)^\trans \bq,
\end{equation*}
which is consistent with \eqref{eq:meanAsymptotics} since $\E[\varphi_\bq](t) = (\bq_j e^{-jt})_{j=1}^{m-1}\1_{[0,\infty)}(t)$ and
\begin{equation*}
\begin{split}
\sum_{\lambda\in\Lambda} e^{\lambda t} A_{\lambda,1} \int_\R e^{-\lambda s} \E[\varphi_\bq](s) \dd s &= \sum_{\lambda\in\Lambda} \frac{e^{\lambda t}}{\beta_\lambda} \bv(\lambda) \bu(\lambda)^\trans \bigg( \frac{\bq_j}{\lambda+j} \bigg)_{j\in[m-1]} \\
&= \sum_{\lambda\in\Lambda} \frac{e^{\lambda t}}{\beta_\lambda} \bv(\lambda) \bw(\lambda)^\trans \bq.
\end{split}
\end{equation*}

As in the previous examples, the scaling depends on whether there are any roots of $w_m$ on the critical line $\Re(z) = 1/2$. In any case we have
\begin{equation*}
\begin{split}
H_\Lambda^\bq(t) &=
\sum_{\lambda\in\intLambda} \frac{e^{\lambda t}}{\beta_\lambda} \Xi(\lambda) \bw(\lambda)^\trans \mathbf{q} \\
&= \bigg(\frac{e^t}\beta W \bw(1)^\trans + \sum_{\lambda \in \Lambda^+} 2\Re\bigg(\frac{e^{\lambda t}}{\beta_\lambda} \Xi(\lambda) \bw(\lambda)^\trans\bigg) \bigg) \mathbf{q},
\end{split}
\end{equation*}
where $\Lambda^+ = \{\lambda \in \Lambda: \Re(\lambda) \in (1/2,1), \Im(\lambda) > 0\}$.

If $\dLambda = \varnothing$, then Theorem \ref{thm:main}
and the Cram\'er--Wold device imply
\begin{equation*}
e^{-\frac{t}2} \Big(\cZ_t^\varphi - e^t W \bw(1)^\trans - \sum_{\lambda \in \Lambda^+} 2\Re\big(e^{\lambda t} \Xi(\lambda) \bw(\lambda)^\trans\big)\Big) \tod \bigg( \frac{W}\beta \bigg)^{1/2} \mathcal N,
\end{equation*}
where $\mathcal N$ is a multidimensional Gaussian variable independent of $W$.

If $\dLambda$ is non-empty, that is, consists of two roots $\lambda_b, \overline{\lambda}_b$, then we have the additional term
\begin{equation*}
H_\dLambda^\mathbf{q}(t)
= 2\Re \Big( e^{\lambda_b t} \be_{\tau(\varnothing)}^\trans \bv(\lambda_b) \bw(\lambda_b)^\trans \Big) \mathbf{q}.
\end{equation*}
However, $H_\dLambda^\mathbf{q}(t) = o((te^{t})^{1/2})$, therefore proceeding as in the previous examples we conclude that
\begin{equation*}
\frac{e^{-\frac{t}2}}{\sqrt{t}} \Big(\cZ_t^\varphi - e^t W \bw(1)^\trans - \sum_{\lambda \in \Lambda^+} 2\Re\big(e^{\lambda t} \Xi(\lambda) \bw(\lambda)^\trans\big)\Big)\tod \bigg(\frac{W}\beta\bigg)^{1/2} \mathcal N,
\end{equation*}
where $\mathcal N$ is a multidimensional Gaussian variable independent of $W$.

Since we know the exact form of $m^\varphi$, the covariance matrix of $\mathcal{N}$ in both cases may be calculated as in the previous examples and expressed in terms of the vectors $\bv(\lambda), \bw(\lambda)$. We refrain from giving further details.

\subsection{Multi-type Galton--Watson process}

The $p$-type Galton--Watson process , $p\in\N$, is a special case of a lattice CMJ process with reproduction given by
\begin{equation*}
\bxi = (B^{ij}\delta_1)_{i,j \in [p]}
\end{equation*}
for some family of random variables $(B^{ij})_{i,j\in[p]}$ taking values in $\N_0$. Limit theorems for finite-type Galton--Watson processes counted with random characteristics supported on the positive half-line were considered in \cite{Kolesko+Sava-Huss:2023}. In this section, we briefly discuss how our results may be applied in this setting.

The Laplace transform of the intensity measure $\bmu$ is given by
\begin{equation*}
\Lap\bmu(z) = B e^{-z}
\end{equation*}
for the matrix $B = (\E[B^{ij}])_{i,j\in[p]}$. Since $\det(\Ip - \Lap\bmu(z)) = e^{-pz}\det(e^z \Ip - B)$, for each $\lambda \in \Lambda$, $e^\lambda$ is an eigenvalue of $B$. In particular, the Malthusian parameter is a positive $\alpha$ such that $e^\alpha$ is the Perron--Frobenius eigenvalue of $B$. Denote by $\rho_1,\dots,\rho_n$, $n\leq p$, the eigenvalues of $B$ and by $m_{\rho_1},\dots,m_{\rho_n}$ their geometric multiplicities. We have
\begin{equation*}
\Lambda = \{\lambda \in [\alpha/2,\alpha] + \imag(-\pi,\pi]: e^\lambda\in\{\rho_1,\dots,\rho_n\} \}.
\end{equation*}

Similarly to the continuous-time Pólya urn model discussed in Section \ref{sec:Polya}, for any $\lambda\in\Lambda$ the matrices $A_{\lambda,k}$ are closely related to generalized eigenvectors of $B$ corresponding to the eigenvalue $e^\lambda$. In what follows, we use the notation introduced in Section \ref{sec:Polya}. That is, we consider the Jordan decomposition
$B = PDP^{-1}$ for $D$ being the Jordan matrix with Jordan blocks $J(\rho_1,1),\dots,J(\rho_n,m_{\rho_n})$. The matrix of $(\Ip - \Lap\bmu(z))^{-1} = (\Ip - e^{-z}B)^{-1}$ written in the eigenbasis of columns of $P$ is a diagonal block matrix, with diagonal consisting of blocks
\begin{equation*}
(\I_{d_{j,l}} - e^{-z} J(\rho_j,l))^{-1}
= e^{z}
\begin{pmatrix}
(e^z-{\rho_j})^{-1} & (e^z-{\rho_j})^{-2} &\cdots & (e^z-{\rho_j})^{-d_{j,l}} \\
& \ddots & \ddots \\
& &  (e^z-{\rho_j})^{-1} & (e^z-{\rho_j})^{-2} \\
&&& (e^z-{\rho_j})^{-1}
\end{pmatrix}.
\end{equation*}
As a consequence, for $\lambda_i = \log(\rho_i) \in \Lambda$, the multiplicity $k_{\lambda_i}$ is the size of the largest Jordan block corresponding to $\rho_j$. Furthermore, for $k\in[k_{\lambda_i}]$ we have $A_{\lambda_i,k} = P A_{\lambda_i,k}' P^{-1}$, where $A_{\lambda_i,k}'$ is a block diagonal matrix; the $l$th block corresponding to the eigenvalue $\rho_i$ is of the form
\begin{equation*}
(k_\lambda - k)! \begin{pmatrix}
g_{i,1}^{(k_\lambda - k)}(\lambda_i) & g_{i,2}^{(k_\lambda - k)}(\lambda_i) & \cdots & g_{i,d_{i,l}}^{(k_\lambda - k)}(\lambda_i) \\
& \ddots & \ddots \\
& & & g_{i,1}^{(k_\lambda - k)}(\lambda_i),
\end{pmatrix}
\end{equation*}
where $g_{i,j}(z) = e^z (z-\lambda_i)^{k_{\lambda_i}}(e^z - \rho_i)^{-j}$. The entries of $A_{\lambda_i,k}'$ in the blocks corresponding to other eigenvalues are zero. 

Note that the expected configuration of particles in the Galton--Watson process present at time $n\in\N_0$ is $B^n$. In particular, if a characteristic $\varphi$ is supported on the positive half-line, the mean of the process counted with $\varphi$ at time $n$ is exactly
\begin{equation*}
\Ev[\cZ_n^\varphi] = \sum_{k=0}^n B^{k} \E[\varphi](n-k).
\end{equation*}
That is, when applying Theorem \ref{thm:main} (i), the variance of the limiting Gaussian variable may be calculated explicitly using formula \eqref{eq:hvarphi}.

\subsection{Supercritical binary CMJ process}
In this section we consider a multi-type version of the binary supercritical CMJ process. Each particle reproduces at times given by arrivals of a homogeneous Poisson process with rate $b > 0$. Upon reproduction, it produces one child whose type is chosen randomly. Moreover, each particle has a finite lifetime given by a copy of a variable $\zeta$. That is,
\begin{equation*}
\bxi = \bigg(\sum_{n\in\N} B_{ij}(n)\1_{\{T_n < \zeta\}} \delta_{T_n}\bigg)_{i,j\in[p]}
\end{equation*}
where $\xi \coloneqq \sum_{n\in\N} \delta_{T_n}$ is a homogeneous, rate $b$ Poisson process and $(B_{ij}(n))_{i,j\in[p]}$ are i.i.d.\ copies of a random matrix $(B_{ij})_{i,j\in[p]}$ whose each row, almost surely, has exactly one entry equal to $1$ and all other entries equal to $0$. We assume that $\xi$, the matrix $(B_{ij})_{i,j\in[p]}$ and $\zeta$ are independent. In particular, for any $i\in[p]$, $(\xi^{ij})_{j\in[p]}$ is a coloured Poisson point process, i.\,e.\ a vector of independent Poisson processes, restricted to a random interval $[0,\zeta)$. Note that the branching process with such reproduction is not, in general, Markovian.

Our goal is to examine the asymptotic behaviour of the configuration of particles, that is, the process $(\cZ_t^\varphi)_{t\geq 0}$ with the characteristic
\begin{equation*}
\varphi(t) = \Ip \1_{[0,\zeta)}(t).
\end{equation*}
To this end, we may consider a characteristic $\varphi_\bq(t) = \varphi(t) \bq$ for an arbitrary vector $\bq \in \R^p$ and use Theorem \ref{thm:main} together with the Cram\'er--Wold device.

We have
\begin{equation*}
\bmu(\dd x) = \E[\1_{[0 ,\zeta)}(x) b \dd x] B,
\end{equation*}
where $B = (\E[B_{ij}])_{i,j\in[p]}$, and the Laplace transform is given by
\begin{equation*}
\Lap\bmu(z) = \E\Big[\int_0^\zeta e^{-zx} b \dd x \Big] B = \frac{1 - \Lap_\zeta(z)}{z} b B,
\end{equation*}
where $\Lap_\zeta$ is the Laplace transform of the random variable $\zeta$. We assume that $B$ is irreducible. Observe that, since each row of $B$ sums to $1$, its Perron--Frobenius eigenvalue is $1$ and the corresponding right eigenspace is spanned by a vector with all entries equal to $1$. The Malthusian parameter is the unique $\alpha > 0$ that satisfies
\begin{equation*}
\frac{1 - \Lap_\zeta(\alpha)}{\alpha} = \frac1b.
\end{equation*}
Note that the vectors $\bu^\trans, \bv$ are left and right eigenvectors of $B$ corresponding to the eigenvalue $1$. In particular, $\bv = (p^{-1})_{i\in[p]}$ and the Nerman's martingale is
\begin{equation*}
W_t = \frac1p\sum_{u\in\cC_t} e^{-\alpha S(u)}.
\end{equation*}
The parameter $\beta$ is given by
\begin{equation*}
\beta = \frac{\alpha\Lap'_\zeta(\alpha) + 1 - \Lap_\zeta(\alpha)}{\alpha^2} b \bu^\trans B \bv = \frac{b}{\alpha} \bigg(\Lap'_\zeta(\alpha) + \frac1{b}\bigg) = \frac{b\Lap'_\zeta(\alpha) + 1}{\alpha }.
\end{equation*}

In general, if $\gamma_1, \dots, \gamma_p$ are eigenvalues of $B$ (possibly with repetitions), then $\Lambda$ consists of roots of equations
\begin{equation}\label{eq:killedPoisson}
\frac{1 - \Lap_\zeta(z)}z = \frac1{b \gamma_k} 
\end{equation}
for $k\in[p]$ that satisfy $\Re(z) \geq \alpha/2$. It is worth noting that even when $\lambda \in \Lambda$ is a simple root to \eqref{eq:killedPoisson}, then its multiplicity $k_\lambda$ may be larger than $1$ if the corresponding eigenvalue is defective. That is, the asymptotic behaviour of the process depends strongly on the spectral properties of the matrix $B$. In contrast, in a single-type setting the Malthusian parameter is always the only element of $\Lambda$, as was shown in \cite[Section 3.4]{Iksanov+al:2024}. As a consequence, regardless of the specific choice of $\zeta$ the single-type process has Gaussian fluctuations around its a.\,s.\ limit and no terms of smaller order appear in the scaling. In the multi-type setting the interplay between the law of $\zeta$ and eigenvalues of $B$ becomes relevant. Observe that the function
\begin{equation*}
f(z) = \frac{1-\Lap_\zeta(z)}z = \int_0^\infty e^{-zx} \P[\zeta \geq x] \dd x
\end{equation*}
satisfies $f(0) = \E[\zeta]$, $|f(z)| \leq f(\Re(z))$ for all $z \in [0,\infty) + \imag \R$ and is strictly decreasing on $[0,\infty)$. It was also observed in \cite{Iksanov+al:2024} that when $\Re(z) > 0$, then $f(z) \in \R$ if and only if $z\in\R$. As a consequence, if $\E[\zeta]$ is finite and satisfies $b\E[\zeta] \in (1,r^{-1})$, where $r = \max\{|\gamma_k| : k\in[p], \gamma_k\neq 1\}$, then equations \eqref{eq:killedPoisson} have no roots apart from $\alpha$. If, on the other hand, $\E[\zeta] = \infty$ and $B$ has several real positive eigenvalues, then for every $\gamma_k \in (0,\infty)$ the equation \eqref{eq:killedPoisson} has exactly one solution, which is real and simple.

Let us briefly describe how Theorem \ref{thm:main} applies when $B$ is diagonalizable and all the roots $\lambda\in\Lambda$ satisfy $k_\lambda = 1$. This holds, for example, when $B$ has real eigenvalues only. Write
\begin{equation*}
B = P\mathrm{diag}(\gamma_1,\dots,\gamma_p) P^{-1}
\end{equation*}
with $\gamma_1 = 1$ and $P$ whose columns are the corresponding eigenvectors, which we denote by $\bv(1), \dots, \bv(p)$. Without loss of generality, we assume that $\bv = \bv(1)$ and that $\bv(k) = \overline{\bv(j)}$ if $\gamma_k = \overline{\gamma_j}$ (note that, since $B$ has real entries only, its non-real eigenvalues come in conjugate pairs). Further, let $\bu(k)^\trans$ be the $k$th row of $P^{-1}$. Then $\bu(k)^\trans$ is a left eigenvector of $B$ corresponding to $\gamma_k$ and, since $P^{-1}P = \Ip$, we have
\begin{equation}\label{eq:uv orthogonality}
\bu(j)^\trans \bv(i) = \1_{\{i=j\}} \quad \text{for } i,j\in[p].
\end{equation}
In particular, $\bu = \bu(1)$.

For $\lambda\in\Lambda$ denote by $\gamma_\lambda$ the corresponding eigenvalue. We have
\begin{equation*}
(\Ip - \Lap\bmu(z))^{-1} = P \mathrm{diag} \bigg(\frac{z}{z - \gamma_1b(1 - \Lap_\zeta(z))}, \dots, \frac{z}{z - \gamma_pb(1 - \Lap_\zeta(z))}\bigg) P^{-1},
\end{equation*}
therefore
\begin{equation*}
A_{\lambda, 1} = \frac{\lambda}{1 + \gamma_\lambda b\Lap_\zeta'(\lambda)} P\mathrm{diag}(\1_{\{\gamma_1 = \gamma_\lambda\}}, \dots, \1_{\{\gamma_p = \gamma_\lambda\}}) P^{-1} = \frac1{\beta_\lambda} \sum_{j: \gamma_j = \gamma_\lambda} \bv(j) \bu(j)^\trans,
\end{equation*}
where $\beta_\lambda = (1 + \gamma_\lambda b\Lap'_\zeta(\lambda))/\lambda$. The corresponding martingale is given by
\begin{equation*}
W_t(\lambda) = \frac1{\beta_\lambda} \sum_{u \in \Cc_t} \sum_{j : \gamma_j=\gamma_\lambda} e^{-\lambda S(u)}\be_{\tau(u)}^\trans \bv(j) \bu(j)^\trans.
\end{equation*}
Denote
\begin{equation*}
\Xi_j(t) = \sum_{u\in\Cc_t} e^{-\lambda S(u)} \be_{\tau(u)}^\trans \bv(j)
\end{equation*}
and observe that, due to \eqref{eq:uv orthogonality}, $\Xi_j(t) = \beta_j W_t(\lambda) \bv(j)$. In particular, if $(W_t(\lambda))_{t\geq 0}$ converges, then so does $(\Xi_j(t))_{t\geq 0}$ for any $j\in[p]$ such that $\gamma_j = \gamma_\lambda$. We denote its limit by $\Xi_j$ and note that
\begin{equation*}
W(\lambda) = \frac1{\beta_\lambda}\sum_{j:\gamma_j = \gamma_\lambda} \Xi_j \bu(j)^\trans.
\end{equation*}

Recall that we consider a random characteristic $\varphi_\bq(t) = \bq \1_{[0,\zeta)}(t)$. For any $\lambda\in\Lambda$,
\begin{equation*}
\int e^{-\lambda s}\E[\varphi_\bq](s) \dd s = \E\Big[\int_0^\zeta e^{-\lambda s} \dd s\Big] \bq = \frac{1 - \Lap_\zeta(\lambda)}{\lambda} \bq = \frac1{b\gamma_\lambda} \bq,
\end{equation*}
therefore
\begin{align*}
H_\Lambda^\bq(t) &= \1_{[0,\infty)}(t) \sum_{\lambda\in\intLambda} \frac{e^{\lambda t}}{b\beta_\lambda\gamma_\lambda} \sum_{j:\gamma_j=\gamma_\lambda} \Xi_j \bu(j)^\trans \bq, \\
H_\dLambda^\bq(t) &= \1_{[0,\infty)}(t) \sum_{\lambda\in\dLambda} \frac{e^{\lambda t}}{b\beta_\lambda\gamma_\lambda} \sum_{j:\gamma_j=\gamma_\lambda} (\be_{\tau(\varnothing)}^\trans \bv(j)) \bu(j)^\trans \bq. 
\end{align*}

Assume first that $\dLambda \neq \varnothing$ and denote $I = \{j\in[p] : \gamma_j = \gamma_\lambda \text{ for some } \lambda\in\dLambda\}$. For each $\lambda\in\dLambda$, we have the vector
\begin{equation*}
\begin{split}
Y(\lambda,0) &= \int e^{-\lambda x} \bxi(\dd x) \int e^{-\lambda s} A_{\lambda,1} \E\varphi_\bq(s) \dd s 
\\
&= \int e^{-\lambda x} \bxi(\dd x) \frac1{\gamma_\lambda\beta_\lambda} \sum_{j:\gamma_j = \gamma_\lambda} \bv(j) \bu(j)^\trans\bq.
\end{split}
\end{equation*}
In particular, $\rho_0^\bq = 0$ if $\bq$ is orthogonal to the space spanned by $\{\bu(j) : j\in I\}$. Note that, in view of \eqref{eq:uv orthogonality} and the fact that $\{\bv(j): j\in[p]\}$ is a basis of $\C^p$, this is equivalent to $\bq \in \mathrm{lin}\{\bv(j) : j \notin I\}$. In any case, Theorem \ref{thm:main} gives
\begin{equation*}
\frac{e^{-\frac\alpha2t}}{\sqrt{t}} \Big( \cZ_t^{\varphi_\bq} - H_\Lambda^\bq(t) - H_\dLambda^\bq(t)\Big) \tod \rho_0^\bq \Big(\frac{W}{\beta}\Big)^{1/2}  \mathcal{N},
\end{equation*}
with the limit being degenerate when $\bq \in \mathrm{lin}\{\bv(j) : j\notin I\}$. Note that $H_\dLambda(t) = o(\sqrt{t}e^{\frac\alpha2t})$ as $t\to\infty$, therefore this term may be skipped. With the Cram\'er--Wold device, we conclude that
\begin{equation*}
\frac{e^{-\frac\alpha2t}}{\sqrt{t}} \Big( \cZ_t^{\varphi} - \sum_{\lambda\in\intLambda} \frac{e^{\lambda t}}{b\beta_\lambda\gamma_\lambda} \sum_{j:\gamma_j=\gamma_\lambda} \Xi_j \bu(j)^\trans\Big) \tod \Big(\frac{W}{\beta}\Big)^{1/2} \mathcal N
\end{equation*}
for a centered Gaussian vector $\mathcal N$ whose covariance matrix may be determined by calculating $\rho_0^\bq$ as in the previous examples.

If $\dLambda = \varnothing$, then Theorem \ref{thm:main} together with the Cram\'er--Wold device give
\begin{equation*}
e^{-\frac\alpha2t} \Big( \cZ_t^{\varphi} - \sum_{\lambda\in\intLambda} \frac{e^{\lambda t}}{b\beta_\lambda\gamma_\lambda} \sum_{j:\gamma_j=\gamma_\lambda} \Xi_j \bu(j)^\trans\Big) \tod \Big(\frac{W}{\beta}\Big)^{1/2} \mathcal N
\end{equation*}
for a centered Gaussian vector $\mathcal N$ whose covariance matrix may be determined with the help of Remark \ref{rem:sigma}.

\subsection{Cell cycle model} In this section, we consider a model used in
\cite[Chapter 6.4]{Kimmel+Axelrod:2015} 
to describe the effect of chemotherapy on cancer cells. The authors distinguished $18$ phases which each cell undergoes during its lifetime. The last phase is mitosis, after which the cell splits into two new cells. The newborn cells then enter phase $1$ and undergo the same cycle. Based on experimental data, the authors claim that most of the randomness in the duration of cell cycle comes from phase $1$, which is the ``gap time'' between the cell birth and the beginning of DNA replication, and thus the durations of phases $2-18$ may be without loss assumed to be deterministic. The duration of phase $1$ is assumed to have a continuous distribution. Furthermore, each cell is constantly exposed to a drug whose effectiveness depends on the cycle phase. That is, the cell in phase $i$ has probability $p_i \in (0,1)$ to proceed to the next phase and probability $1-p_i$ to get blocked by the drug and stay in phase $i$ forever.

In general, the described model is a multi-type Bellman--Harris process with reproduction
\begin{equation*}
\bxi = \begin{pmatrix}
0 & X_1 \delta_{T_1} \\
& 0 & X_2 \delta_{t_2} \\
& & \ddots & \ddots \\
&&& 0 & X_{m-1} \delta_{t_{m-1}} \\
2 X_m \delta_{t_m} &&&& 0
\end{pmatrix}
\end{equation*}
for some $m\in \N$, deterministic times $t_2,\dots t_m \in (0,\infty)$, a random, continuously distributed $T_1$ and random variables $X_1,\dots,X_m$ such that $\P[X_i = 1] = p_i = 1- \P[X_i = 0]$. We assume for simplicity that $X_1$ and $T_1$ are independent. For a concrete example, let $T_1$ be gamma-distributed with density $f(x) = (\Gamma(\kappa) \theta^\kappa)^{-1} x^{\kappa - 1} e^{-x/\theta}$ for some parameters $\kappa,\theta > 0$. The Laplace transform of the intensity measure is then
\begin{equation*}
\Lap\bmu(z) = \begin{pmatrix}
0 & p_1 (1+\theta z)^{-\kappa} \\
& 0 & p_2 e^{-z t_2} \\
& & \ddots & \ddots \\
&&& 0 & p_{m-1} e^{-z t_{m-1}} \\
2 p_m e^{-z t_m} &&&& 0
\end{pmatrix}
\end{equation*}
for $\Re(z) > -1/\theta$. In particular,
$$\det(\I_m - \Lap\bmu(z)) = 1 - c_p (1+\theta z)^{-\kappa} e^{-z c_t},$$
where
$$ c_p = 2 \prod_{i=1}^m p_i, \quad c_t = \sum_{i=2}^m t_i.$$
\begin{figure}[h!]
	\centering
	\begin{subfigure}{.45\textwidth}
		\centering
		\includegraphics[width=\linewidth]{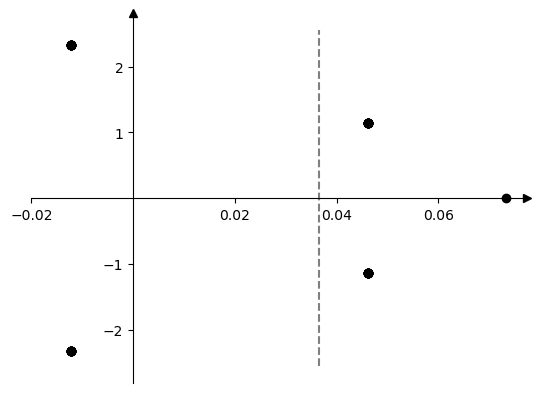}
		\phantomcaption{\footnotesize $\kappa = 1.1, \theta = 0.5, c_p = 1.5, c_t = 5$: $\alpha \approx 0.0732$.}
	\end{subfigure}
	\begin{subfigure}{.45\textwidth}
		\centering
		\includegraphics[width=\linewidth]{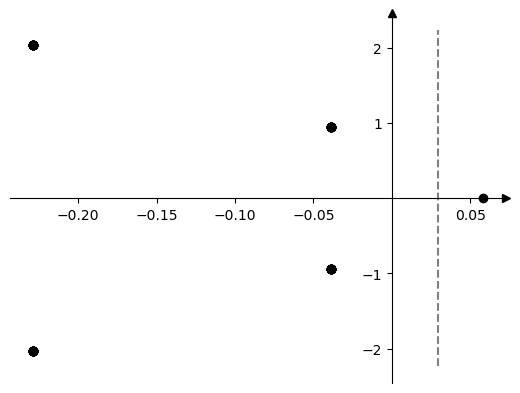}
		\phantomcaption{\footnotesize $\kappa = 2, \theta = 1, c_p = 1.5, c_t = 5$: $\alpha \approx 0.0584$}
	\end{subfigure}
	\caption{Numeric approximation of solutions to $\det(\I_m - \Lap\bmu(z)) = 0$ in the cell cycle model. The dashed line is the critical line $\Re(\lambda) = \alpha/2$.}
	\label{fig:cellcycle}
\end{figure}

Observe that if $\prod_{i\in[m]} p_i$, the probability that a cell completes the cycle, is smaller than $1/2$, then the process goes extinct almost surely. Therefore, we assume that $c_p > 1$. The Malthusian parameter $\alpha$ is then the unique real positive number satisfying
$$ \alpha c_t + \kappa\log(1 + \theta \alpha) = \log c_p. $$
Whether $|\Lambda| > 1$ depends on the parameters of the model (see Figure \ref{fig:cellcycle} for exemplary realisations of $\Lambda$). Note, however, that all roots in $\Lambda$ are simple since
\begin{equation*}
\frac{\dd}{\dd z} \det(\Ip - \Lap\bmu(z)) = c_p(1+\theta z)^{-\kappa} e^{-zc_t} (\kappa(1+\theta z)^{-1} + c_t) \neq 0 \quad \text{when } \Re(z) > 0.
\end{equation*}
That is, $k_\lambda = 1$ for any $\lambda\in\Lambda$. Similarly to Section \ref{sec:m-aryST}, we may express the matrices $A_{\lambda,1}$ in terms of left and right eigenvectors of $\Lap\bmu(\lambda)$. That is, the characteristic polynomial of $\Lap\bmu(\lambda)$ is
\begin{equation*}
x \mapsto \det(x\I_m - \Lap\bmu(\lambda) ) = x^p - c_pe^{-\lambda c_t}(1+\theta \lambda)^{-\kappa} = x^p - 1,
\end{equation*}
therefore the left and right eigenspaces of $\Lap\bmu(\lambda)$ corresponding to the eigenvalue $1$ are spanned by some vectors $\bu(\lambda), \bv(\lambda)$ respectively. We may choose them such that $\bu(\lambda)^\trans \bv(\lambda) = 1$, $\bu(\alpha) = \bu, \ \bv(\alpha) = \bv$ and $\bv(\overline{\lambda}) = \overline{\bv(\lambda)}$ for any $\lambda \in \Lambda$. In view of \eqref{eq:matrixRec}, we have, for some $\beta_\lambda \neq 0$,
\begin{equation*}
A_{\lambda,1} = \frac1{\beta_\lambda} \bv(\lambda)\bu(\lambda)^\trans
\end{equation*}
and reasoning as in Remark \ref{rem:NermanviaA} we conclude that $\beta_\lambda = \bu(\lambda)^\trans (-\Lap\bmu'(\lambda))\bv(\lambda)$. Note that a significant difference between this setting and other models presented in this section is that the matrices $\Lap\bmu(\lambda)$, $\lambda\in\Lambda$ need not share eigenbases.

With this representation of the matrix $A_{\lambda,1}$, the martingale $W_t(\lambda)$ may be written as
\begin{equation*}
W_t(\lambda) = \frac1{\beta_\lambda} \Xi_t(\lambda) \bu(\lambda)^\trans, \quad \Xi_t(\lambda) = \sum_{u\in\cC_t} e^{-\lambda S(u)} \be_{\tau(u)}^\trans \bv(\lambda) 
\end{equation*}
and if $W_t(\lambda) \to W(\lambda)$ as $t\to\infty$, then also $\Xi_t(\lambda) \to \Xi(\lambda)$ for a complex-valued random variable $\Xi(\lambda)$ that satisfies $\beta_\lambda^{-1} \Xi(\lambda) \bu(\lambda)^\trans = W(\lambda)$. In particular, $\Xi(\alpha) = W$ is the limit of Nerman's martingale.

For $n \in [m]$, let
$$\varphi_n(t) = \be_n (1 - X_n) \1_{[0,\infty)}(t),$$
so that $\cZ_t^{\varphi_n}$ is the number of cells that were born up to time $t$ and stopped by the drug in phase $n$. Proposition \ref{thm:asconv} implies that
$$ e^{-\alpha t} Y_t \coloneqq e^{-\alpha t} (\cZ_t^{\varphi_1}, \cdots, \cZ_t^{\varphi_m}) \to \frac{W}{\alpha \beta} (\bu_1(1-p_1), \cdots, \bu_m(1-p_m))$$
almost surely as $t\to\infty$. The fluctuations of the process may be obtained with the help of Theorem \ref{thm:main} and the Cram\'er--Wold device. That is, for a fixed vector $\bq = (\bq_n)_{n\in[m]} \in \R^m$ consider a characteristic $\varphi_\bq(t) = (\bq_n (1 - X_n))_{n\in[m]}\1_{[0,\infty)}(t)$. Then $\E[\varphi_\bq](t) = (\bq_n(1-p_n))_{n\in[m]}\1_{[0,\infty)}(t)$ and thus
\begin{equation*}
\bu(\lambda)^\trans \E[\varphi_\bq](t) = \bw(\lambda)^\trans \bq \1_{[0,\infty)}(t)
\end{equation*}
for a vector $\bw(\lambda) \in \C^m$ whose $n$th entry is $\be_n^\trans \bu(\lambda)(1-p_n)$. As a consequence,
\begin{align*}
H_\Lambda^\bq(t) &= \sum_{\lambda\in\intLambda} \frac{e^{\lambda t}}{\lambda \beta_\lambda} \Xi(\lambda) \bw(\lambda)^\trans \bq, \\
H_\dLambda^\bq(t) &= \sum_{\lambda\in\dLambda} \frac{e^{\lambda t}}{\lambda \beta_\lambda} \be_{\tau(\varnothing)}^\trans \bv(\lambda) \bw(\lambda)^\trans \bq.
\end{align*}
Observe that $\alpha$ is the only real root, therefore the contribution of other elements $\lambda \in \Lambda$ (if they exist) to the scaling $H_\Lambda^\bq, H_\dLambda^\bq$ gives periodic terms.

If $\dLambda = \varnothing$, then we conclude
\begin{equation*}
e^{-\frac\alpha2t} \Big( Y_t - \sum_{\lambda\in\Lambda} \frac{e^{\lambda t}}{\lambda \beta_\lambda} \Xi(\lambda) \bw(\lambda)^\trans \Big) \to W^{1/2} \mathcal N
\end{equation*}
for a multidimensional centered Gaussian variable $\mathcal N$ independent of $W$. Since the roots are simple, the covariance matrix of $\mathcal N$ may be determined with the help of Remark \ref{rem:sigma}.

If $\dLambda \neq \varnothing$, then, since $H_\dLambda^\bq(t) = o(\sqrt{t}e^{\frac\alpha2t})$ for any $\bq$, proceeding as in previous examples we conclude
\begin{equation*}
\frac{e^{-\frac\alpha2t}}{\sqrt{t}} \Big( Y_t - \sum_{\lambda\in\intLambda} \frac{e^{\lambda t}}{\lambda \beta_\lambda} \Xi(\lambda) \bw(\lambda)^\trans \Big) \to W^{1/2} \mathcal N
\end{equation*}
for a multidimensional centered Gaussian variable $\mathcal N$ independent of $W$.

\section{The associated Markov random walk}\label{sec:MRW}

Suppose that, in addition to our standing assumptions, the Malthusian assumption~\ref{assumpt:Malthusian alpha} holds.
Then we can exploit a connection between the CMJ process and an associated Markov random walk
as follows. 
Define the stochastic process $((M_n, S_n))_{n\in\N_0}$ on $[p] \times \R$ via
\begin{multline}\label{eq:manytoone}
\E^i[f((M_0,S_0), \dots (M_n,S_n))] \\
= \E^i\bigg[\sum_{|u|=n} e^{-\alpha S(u)} \frac{\bv_{\tau(u)}}{\bv_i} f((i,0), (\tau(u|_1), S(u|_1)), \dots, (\tau(u),(S(u)))) \bigg]
\end{multline}
for any Borel-measurable $f : ([p]\times \R)^{n+1} \to [0,\infty)$.
One can check that $((M_n,S_n))_{n\in\N_0}$ is a Markov random walk,
i.e., $((M_n,S_n-S_{n-1}))_{n\in\N}$ is a Markov chain with transition kernel depending only on the first coordinate,
see e.g.~\cite[Section 3.1]{Iksanov+Meiners:2015}.
The following facts can also be found in the same reference.
The initial distribution of this process is $\P^i[(M_0,S_0)= (i,0)] = 1$ and its transition kernel is given by
\begin{equation*}
\P^i\big[(M_{n+1}, S_{n+1}-S_n) \in \{k\}\times B \, \big|\, M_n=j \big]
= \frac{\bv_k}{\bv_j} \E^j \bigg[ \sum_{|u|=1} e^{-\alpha S(u)} \1_{\{\tau(u)=k, \, S(u)\in B\}} \bigg]
\end{equation*}
for any Borel $B \subseteq \R$ and $j,k \in [p]$. The process $(M_n)_{n\in\N_0}$
is an irreducible Markov chain with stationary distribution given by $\pi_i = \bv_i\bu_i$ for $i\in[p]$.
We write $\P^\pi \coloneqq \sum_{i=1}^p \pi_i \P^i$ and $\E^\pi$ for corresponding expectation.
Notice for later use that
\begin{align}
\E^\pi[S_1]
&= \sum_{i=1}^p \bu_i\bv_i \E^i[S_1]
= \sum_{i=1}^p \bu_i\bv_i \E^i\bigg[\sum_{|u|=1} e^{-\alpha S(u)} \frac{\bv_{\tau(u)}}{\bv_i} S(u) \bigg]	\notag	\\
&= \sum_{i=1}^p \sum_{j=1}^p \bu_i \E^i\bigg[\sum_{|u|=1} e^{-\alpha S(u)} \1_{\{\tau(u)=j\}} S(u) \bigg] \bv_j
= \bu^\trans (-(\Lap\bmu)'(\alpha)) \bv = \beta.	\label{eq:E^pi[X_1]=beta}
\end{align}
The Markov random walk $((M_n,S_n))_{n\in\N_0}$ has the same lattice type as $\bmu$.

\subsection{Asymptotic behavior of the mean of a CMJ}

The underlying Markov random walk may be used to analyze the mean of a multi-type CMJ process.
The link is known in the literature at least since the work of Crump \cite{Crump:1970a}
and has since been exploited in various works. For the reader's convenience,
we state the result on the first-order asymptotic behavior of
the mean in the present context.

\begin{lem}	\label{lem:meanviarenewal}
Assume that~\ref{assumpt:Malthusian alpha} holds and let $\varphi$ be a non-negative,
c\`adl\`ag characteristic satisfying~\ref{assumpt:dRimean}.
Then, for every $i \in [p]$, the function $t \mapsto e^{-\alpha t} \E^i[\cZ_t^\varphi]$ is bounded and
	\begin{equation*}
	e^{-\alpha t} \E^i[\cZ_t^\varphi] \xrightarrow{t\to\infty} \frac{\bv_i}{\beta} \int_\G e^{-\alpha s} \bu^\trans \E[\varphi(s)] \, \ell(\dd s).
	\end{equation*}
	In particular,
	\begin{equation}\label{eq:varseries}
	\E^i \bigg[ \sum_{u\in\Tree} \be_{\tau(u)}^\trans\varphi_u (t - S(u)) \bigg] \leq e^{\alpha t} C
	\end{equation}
	for some constant $C>0$ and any $t\in\G$.
\end{lem}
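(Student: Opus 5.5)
We rewrite the mean as a renewal-type quantity for the Markov random walk $((M_n,S_n))_{n\in\N_0}$ and then apply the Markov renewal theorem. By linearity of expectation and the tree structure, $\E^i[\cZ_t^\varphi] = \E^i\big[\sum_{n\geq 0}\sum_{|u|=n} \be_{\tau(u)}^\trans \E[\varphi](t-S(u))\big]$. Here we use that $\varphi_u$ is independent of $(\tau(u),S(u))$, which depend only on strict progenitors of $u$, together with Tonelli's theorem, which is valid because $\varphi\geq 0$. Applying the many-to-one formula \eqref{eq:manytoone} with $f((M_0,S_0),\dots,(M_n,S_n)) = e^{\alpha S_n}\bv_{M_n}^{-1}\bv_i\,\be_{M_n}^\trans\E[\varphi](t-S_n)$ gives
\begin{equation*}
e^{-\alpha t}\E^i[\cZ_t^\varphi] = \bv_i\,\E^i\Big[\sum_{n\geq0} g_{M_n}(t-S_n)\Big], \qquad g_j(s) \coloneqq \bv_j^{-1} e^{-\alpha s}\,\be_j^\trans\E[\varphi](s).
\end{equation*}
The right-hand side is a Markov renewal function $\sum_j \int g_j(t-s)\,U^i_j(\dd s)$, where $U^i_j(B) = \E^i\big[\sum_n \1_{\{M_n=j,\,S_n\in B\}}\big]$. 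By \ref{assumpt:dRimean}, each $g_j$ is directly Riemann integrable, since $\bv_j>0$ and finitely many types are involved.

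We then invoke the Markov renewal theorem for irreducible finite-state Markov random walks. For the non-lattice case we use Kesten/Athreya--McDonald--Ney; for the $1$-lattice case with zero shift we use its lattice counterpart, noting that the walk inherits the lattice type of $\bmu$. The hypotheses are as follows. The modulating chain $(M_n)$ is irreducible with stationary law $\pi_j=\bu_j\bv_j$. The drift is $\E^\pi[S_1]=\beta\in(0,\infty)$ by \eqref{eq:E^pi[X_1]=beta} and \ref{assumpt:Malthusian alpha}. The functions $g_j$ are directly Riemann integrable. The theorem then yields
\begin{equation*}
\sum_j\int g_j(t-s)\,U^i_j(\dd s)\to \frac{1}{\beta}\sum_j \pi_j\int_\G g_j\,\dd\ell = \frac1\beta\int_\G e^{-\alpha s}\bu^\trans\E[\varphi](s)\,\ell(\dd s),
\end{equation*}
and multiplying by $\bv_i$ gives the claimed limit.

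Boundedness on all of $\G$ (not just for large $t$) follows from the standard uniform bound for renewal measures: $\sup_{x}\sum_j U^i_j([x,x+1])<\infty$. Combining this with direct Riemann integrability gives $\sum_j\int g_j(t-s)U^i_j(\dd s)\le C\sum_j\sum_{k\in\Z}\sup_{[k,k+1]}g_j<\infty$ uniformly in $t$. This yields \eqref{eq:varseries}.

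The main obstacle is justifying the uniform local boundedness of $U^i_j$ together with the correct lattice/non-lattice hypotheses of the Markov renewal theorem. Here we will cite the version used in \cite[Section 3]{Iksanov+Meiners:2015}, which was applied there in the non-lattice case, and observe that the lattice analogue holds by the same arguments, using that $S_n\to\infty$ a.s. because of the positive drift.
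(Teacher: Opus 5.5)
Your proposal is correct and follows the paper's proof: you use the many-to-one formula \eqref{eq:manytoone} to write $e^{-\alpha t}\E^i[\cZ_t^\varphi]$ as $\bv_i$ times a Markov renewal function with directly Riemann integrable $g_j$, and then apply the Markov renewal theorem with stationary law $\pi_j=\bu_j\bv_j$ and drift $\E^\pi[S_1]=\beta$. Your explicit derivation of boundedness on all of $\G$, via the uniform local boundedness of the renewal measure, is the standard argument, and it is a step the paper leaves implicit in its citation of the renewal theorem (Crump, Alsmeyer).
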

\begin{proof}
	Since $\varphi$ is non-negative, we may apply Fubini's theorem and~\eqref{eq:manytoone} to obtain
	\begin{equation*}
	\begin{split}
	e^{-\alpha t} \E^i[\cZ_t^\varphi]
	&= \E^i \bigg[ \sum_{n=0}^\infty \sum_{|u|=n} e^{-\alpha t} \be_{\tau(u)}^\trans \varphi_u(t-S(u)) \bigg] \\
	&= \bv_i \sum_{n=0}^\infty \E^i \bigg[ \sum_{|u|=n} e^{-\alpha S(u)} \frac{\bv_{\tau(u)}}{\bv_i}
	\frac{e^{-\alpha (t-S(u))}}{\bv_{\tau(u)}} \be_{\tau(u)}^\trans \E[\varphi](t-S(u)) \bigg] \\
	&= \bv_i \sum_{n=0}^\infty \E^i \bigg[  \frac{e^{-\alpha(t-S_n)}}{\bv_{M_n}} \be_{M_n}^\trans \E[\varphi](t-S_n) \bigg].
	\end{split}
	\end{equation*}
	Therefore and in view of~\eqref{eq:E^pi[X_1]=beta},
	the statement of the lemma follows from~\eqref{eq:E^pi[X_1]=beta} and the renewal theorem for Markov random walks,
	see e.g.~\cite[Theorem 3.1]{Crump:1970a} in the case that $\varphi$
	is concentrated on the positive halfline or \cite[Theorem 2.1]{Alsmeyer:1994} for a very general formulation.
\end{proof}

\subsection{Existence of the CMJ process}\label{sec:existence}

Lemma~\ref{lem:meanviarenewal} implies that for every $t\in\G$,
the random variable $\cZ_t^\varphi$ is well-defined and integrable
whenever the characteristic $\varphi$ is non-negative and satisfies~\ref{assumpt:dRimean}.
In what follows, we provide two further sets of assumptions under which a CMJ process counted with general characteristic
is well-defined.

To state these results, we first introduce the notion of an \emph{admissible ordering} of $\UHTree$.
We call a sequence $u_1,u_2,\ldots \in \UHTree$ an admissible ordering of $\UHTree$ if
\begin{itemize}
	\item	$\UHTree_n \coloneqq \{u_1,\ldots,u_n\}$ is a subtree of the Ulam-Harris tree $\UHTree$ of cardinality $n$,
	\item	$\UHTree = \bigcup_{n \in \N} \UHTree_n$.
\end{itemize}
Admissible orderings exist, see \cite[p.\;1545]{Iksanov+al:2024}.

\begin{lem}\label{lem:mtg}
Assume that~\ref{assumpt:Malthusian alpha} holds and let $\chi$ be a centered characteristic satisfying~\ref{assumpt:dRivar}.
Fix $t\in \G$ and let $(u_1, u_2, \dots)$ be an admissible ordering of $\UHTree$.
Then the process $(M_n(t))_{n\in\N_0}$ given by
	\begin{equation*}
	M_n(t) = \sum_{k=1}^n \be_{\tau(u_k)}^\trans \chi_{u_k}(t - S(u_k)),	\quad	n \in \N_0
	\end{equation*}
	is a martingale that converges a.\,s.\ and in $L^2$. The series
	\begin{equation*}
	\cZ_t^\chi = \sum_{u\in\Tree} \be_{\tau(u)}^\trans \chi_u(t-S(u)) 
	\end{equation*}
	converges unconditionally in $L^2$ and a.\,s.
	In particular, $\cZ_t^\chi$ is the limit of the martingale $(M_n(t))_{n\in\N_0}$. Moreover,
	\begin{equation}
	\Varv[\cZ_t^\chi] = \Ev\big[\cZ_t^{\E[\chi^2]}\big] = \Ev\big[\cZ_t^{\Var[\chi]}\big].
	\end{equation}
\end{lem}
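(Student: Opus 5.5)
We will follow the single-type argument of \cite{Iksanov+al:2024}, using the filtration generated by the ordering. Let $\F_n$ be the $\sigma$-algebra generated by $\tau(\varnothing)$ and by $\pi_{u_1},\dots,\pi_{u_n}$, i.e.\ by the life histories $(\bxi_{u_k},\chi_{u_k})$, $k\le n$. Since $\UHTree_n$ is a subtree, every strict progenitor of $u_{n+1}$ lies in $\UHTree_n$. Hence $\tau(u_{n+1})$ and $S(u_{n+1})$ are $\F_n$-measurable, whereas $\chi_{u_{n+1}}$ is independent of $\F_n$. Because $\chi$ is centered, i.e.\ $\E[\chi](s)=0$ for all $s$, conditioning gives
\[
\E\big[\be_{\tau(u_{n+1})}^\trans\chi_{u_{n+1}}(t-S(u_{n+1}))\,\big|\,\F_n\big]=\be_{\tau(u_{n+1})}^\trans\E[\chi](t-S(u_{n+1}))=0
\]
on $\{S(u_{n+1})<\infty\}$. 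Off this event the increment is zero, with the convention that individuals not in $\Tree$ contribute nothing. Integrability of the increments follows from the $L^2$ bound below. So $(M_n(t))$ is a martingale.

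For the $L^2$ bound we use orthogonality of the increments. Again by conditioning, $\E[|\text{increment}_k|^2\mid\F_{k-1}]=\be_{\tau(u_k)}^\trans\E[\chi^2](t-S(u_k))$, where the square is taken entrywise and equals $\Var[\chi]$ since $\chi$ is centered. Summing gives
\[
\E^i[M_n(t)^2]=\E^i\Big[\sum_{k\le n,\,u_k\in\Tree}\be_{\tau(u_k)}^\trans\Var[\chi](t-S(u_k))\Big]\le\E^i\big[\cZ_t^{\Var[\chi]}\big].
\]
The characteristic $\Var[\chi]$ is deterministic and nonnegative, and $e^{-\alpha t}\Var[\chi](t)$ is directly Riemann integrable by \ref{assumpt:dRivar}. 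Lemma~\ref{lem:meanviarenewal}, in particular \eqref{eq:varseries}, therefore bounds the right-hand side by $Ce^{\alpha t}<\infty$. One caveat: Lemma~\ref{lem:meanviarenewal} requires c\`adl\`ag paths. Rather than proving regularity of $\Var[\chi]$, we would bypass it by invoking the Markov renewal bound directly for directly Riemann integrable functions, as in the proof of that lemma. Either way, $M_n(t)$ is an $L^2$-bounded martingale, so it converges a.s.\ and in $L^2$.

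Next comes unconditional convergence, which is the main point to check carefully. The summands $X_u\coloneqq\be_{\tau(u)}^\trans\chi_u(t-S(u))$ are pairwise orthogonal in $L^2(\P^i)$. This holds because for $u\neq w$ one may choose an admissible ordering in which both appear and apply the martingale difference property above. Moreover $\sum_u\E^i[X_u^2]<\infty$. An orthogonal family with summable squared norms is unconditionally summable in $L^2$, with sum independent of the order. So the $L^2$ limit $\cZ_t^\chi$ is the same for every admissible ordering, and it is the a.s.\ limit of each $M_n(t)$.

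Finally, letting $n\to\infty$ in the variance identity, using monotone convergence on the right and $L^2$ convergence on the left, gives $\Var^i[\cZ_t^\chi]=\E^i[\cZ_t^\chi{}^2]=\E^i[\cZ_t^{\E[\chi^2]}]$. This equals $\E^i[\cZ_t^{\Var[\chi]}]$ since $\E[\chi]=0$. Stacking over $i\in[p]$ yields the vector identity.
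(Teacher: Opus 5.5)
Your proof is correct and is essentially the argument the paper relies on. The paper omits the proof and points to the single-type \cite[Lemma 4.1]{Iksanov+al:2024}, which likewise uses the martingale-difference structure along an admissible ordering, the bound $\E^i[M_n(t)^2]\le \E^i[\cZ_t^{\Var[\chi]}]$ via the renewal estimate, and orthogonality of the summands for unconditional $L^2$ convergence. Your workaround for the c\`adl\`ag caveat, using the renewal bound for directly Riemann integrable functions, is appropriate. You should also state explicitly that $\E^i[\cZ_t^\chi]=\lim_n \E^i[M_n(t)]=0$, so that the second moment you compute is indeed the variance.
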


The lemma is the multi-type extension of \cite[Lemma 4.1]{Iksanov+al:2024}.
The proof of the latter lemma can be adapted to the present situation with only minor modifications
and is therefore omitted.
Similarly, the following result is the multi-type extension of \cite[Lemma 4.2]{Iksanov+al:2024}.
Once again, we state the result without proof, as the argument is a direct generalization of the proof of the corresponding single-type result.

\begin{lem}\label{lem:well-def}
Suppose that~\ref{assumpt:Malthusian alpha} holds.
Further, let $\varphi$ be a characteristic satisfying~\ref{assumpt:dRimean} and~\ref{assumpt:dRivar}.
Then, for any $t\in\G$, the series
	\begin{equation*}
		\cZ_t^\varphi = \sum_{u\in\Tree} \be_{\tau(u)}^\trans \varphi(t - S(u))
	\end{equation*}
	converges unconditionally in $L^1$ and a.\,s.\ under any admissible ordering of $\Tree$.
\end{lem}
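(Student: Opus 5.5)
We prove Lemma~\ref{lem:well-def} by decomposing the characteristic into its mean and a centered part, $\varphi = \E[\varphi] + \chi$ with $\chi \coloneqq \varphi - \E[\varphi]$. Here $\E[\varphi]$ is the deterministic vector-valued function $t \mapsto \E[\varphi](t)$. Correspondingly, for fixed $t\in\G$ we write
\begin{equation*}
\sum_{u\in\Tree} \be_{\tau(u)}^\trans \varphi_u(t-S(u)) = \sum_{u\in\Tree} \be_{\tau(u)}^\trans \E[\varphi](t-S(u)) + \sum_{u\in\Tree} \be_{\tau(u)}^\trans \chi_u(t-S(u)),
\end{equation*}
and treat the two series separately. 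The conclusion follows once each series is shown to converge unconditionally in $L^1$ and almost surely.

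\textbf{The centered part.} Note that $\Var[\chi] = \Var[\varphi]$. Hence $\chi$ is a centered characteristic satisfying~\ref{assumpt:dRivar}, and Lemma~\ref{lem:mtg} applies. It gives unconditional convergence of the $\chi$-series in $L^2$, and therefore in $L^1$, as well as almost sure convergence along any admissible ordering, the limit being the limit of the martingale $(M_n(t))_{n\in\N_0}$. The only point to check is that the almost sure statement in Lemma~\ref{lem:mtg} holds for every admissible ordering, not just for one. This is exactly what the lemma asserts, since the ordering there is arbitrary.

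\textbf{The mean part.} The summands $g(t-S(u))$, with $g\coloneqq \E[\varphi]$, are deterministic functions of $(\tau(u),S(u))$. I would dominate them in absolute value entrywise by $|g|$. Direct Riemann integrability is preserved under taking absolute values, so $t\mapsto e^{-\alpha t}|g|(t)$ is directly Riemann integrable by~\ref{assumpt:dRimean}, and $|g|$ is a non-negative c\`adl\`ag-type deterministic characteristic. Apply Lemma~\ref{lem:meanviarenewal}, namely~\eqref{eq:varseries}, to the characteristic $|g|$. This gives
\begin{equation*}
\E^i\Big[\sum_{u\in\Tree} \big|\be_{\tau(u)}^\trans g(t-S(u))\big|\Big] \leq C e^{\alpha t} < \infty .
\end{equation*}
It follows that the series is absolutely convergent almost surely, and absolutely convergent in $L^1$ by dominated convergence. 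Absolute convergence yields unconditional convergence, and in particular convergence along any admissible ordering.

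\textbf{Main obstacle.} The main technical point is applying Lemma~\ref{lem:meanviarenewal} to $|g|$, since that lemma is stated for c\`adl\`ag characteristics. Two observations handle this. First, $|g|$ is c\`adl\`ag if $g$ is; this follows from right-continuity of $\varphi$ together with the local uniform integrability coming from the second-moment assumption, which justifies interchanging limits and expectations. Second, the proof of Lemma~\ref{lem:meanviarenewal} only uses direct Riemann integrability via the Markov renewal theorem, and the uniform bound~\eqref{eq:varseries} only needs boundedness of the Markov renewal measure on unit intervals. So if needed, I would bypass the c\`adl\`ag issue by bounding $|g|$ by $|g|^*$, which is directly Riemann integrable by Proposition~\ref{prop:char}(i).

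Adding the two pieces gives the claimed unconditional convergence in $L^1$ and almost surely.
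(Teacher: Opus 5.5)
Your proof is correct and follows the route the paper has in mind; the paper gives no proof of its own and defers to the single-type Lemma~4.2 of \cite{Iksanov+al:2024}. That route is to split $\varphi=(\varphi-\E[\varphi])+\E[\varphi]$, treat the centered part with Lemma~\ref{lem:mtg}, and obtain absolute, hence unconditional, convergence of the mean part from the renewal bound~\eqref{eq:varseries} applied to $|\E[\varphi]|$. Your justification that $\E[\varphi]$ is c\`adl\`ag also holds: \ref{assumpt:dRimean} and \ref{assumpt:dRivar} make $\E[\varphi]$ and $\Var[\varphi]$ locally bounded, so $\varphi$ is locally $L^2$-bounded and therefore locally uniformly integrable, without needing \ref{assumpt:local ui}, which is not among the hypotheses.
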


\section{Mean expansion and complex Nerman-type martingales}\label{sec:mean-and-mtgs}

In this section we briefly summarize results of \cite{Kolesko+al:2025,Kolesko+al:2025b},
in which a connection between the Laplace transform of the intensity measure $\bmu$,
complex-valued martingales and the asymptotic behavior of the mean of CMJ processes is studied.
We also prove some facts concerning said martingales which will be of use to us in proving our main result,
Theorem~\ref{thm:main}. 

\subsection{The Laplace transform and complex Nerman-type martingales}

Observe that $(\Ip - \Lap\bmu(z))^{-1}$ may be calculated by
\begin{equation}\label{eq:roots}
	(\Ip - \Lap\bmu(z))^{-1} = \frac{{\rm adj}(\Ip - \Lap\bmu(z))}{\det(\Ip - \Lap\bmu(z))} 
\end{equation}
whenever the denominator is non-zero, where ${\rm adj}(A)$ denotes the adjoint matrix of $A$.
Since both the adjoint and the determinant are defined as holomorphic functions of the matrix entries,
the function $z \mapsto (\Ip - \Lap\bmu(z))^{-1}$ is meromorphic.
For $\lambda \in \Lambda$, let $k_\lambda$ denote the maximal order of the poles of the entries of $(\Ip - \Lap\bmu(z))^{-1}$ at $z=\lambda$.
Then~\eqref{eq:roots} implies that $k_\lambda$ does not exceed the order of the zero of $\det(\Ip - \Lap\bmu(z))$ at $z = \lambda$.
Moreover, in a neighborhood of $\lambda$, the function $z \mapsto (\Ip - \Lap\bmu(z))^{-1}$ admits a Laurent series expansion
\begin{equation*}
	(\Ip - \Lap\bmu(z))^{-1}
	= (z-\lambda)^{-k_\lambda} A_{\lambda, k_\lambda} + (z-\lambda)^{-k_\lambda + 1} A_{\lambda, k_\lambda - 1}
	+ \ldots + (z-\lambda)^{-1} A_{\lambda,1} + H_\lambda(z)
\end{equation*}
for some $p\times p$ matrices $(A_{\lambda,j})_{j\in[k_\lambda]}$ and a matrix of holomorphic functions $H_\lambda$.
On the other hand, $\Ip - \Lap\bmu(z)$ may be expanded componentwise into a Taylor series;
from the fact that the product of both series is the identity matrix, one may deduce that the matrices $A_{\lambda,j}$ satisfy~\eqref{eq:matrixRec}.

The matrices $A_{\lambda,j}$ play an essential role in understanding the asymptotic properties of a CMJ process with reproduction intensity $\bmu$. As was already mentioned in Section~\ref{sec:setup}, it was shown in \cite{Kolesko+al:2025b} that there is a matrix-valued martingale $(W_t(\lambda))_{t\geq 0}$ connected to each $\lambda \in \Lambda$.
Using the Kronecker product and the exponential matrices defined in Section~\ref{sec:matrices}, it may be written as
\begin{equation}\label{eq:mtg1}
	W_t(\lambda) = \sum_{u \in \cC_t} (\exp(\lambda, -S(u), k_\lambda) \otimes \be_{\tau(u)}^\trans) \Av_\lambda,
\end{equation}
where $\Av_\lambda = \sum_{j=1}^{k_\lambda} \be_{j,k_\lambda} \otimes A_{\lambda,j}$.
Moreover, $(W_t(\lambda))_{t\geq 0}$ may be expressed in terms of a matrix-valued CMJ process as defined in Section~\ref{sec:matrices}.
Namely, let $\phi_\lambda$ be the $\M_{k_\lambda \times 1}(\C) \otimes \M_{p \times p}(\C)$-valued characteristic given by
\begin{equation}\label{def:philambda}
	\phi_\lambda(t) = \1_{[0,\infty)}(t) \bigg(\int_{(t,\infty)} \exp(\lambda,t-s,k_\lambda) \otimes \bxi(\dd s)\bigg) \Av_\lambda.
\end{equation}
Note that the corresponding CMJ process is $\M_{k_\lambda \times p}(\C)$-valued.
By \cite[Theorem 2.1]{Kolesko+al:2025b},
\begin{equation}\label{eq:mtg2}
	W_t(\lambda) = \exp(\lambda, -t, k_\lambda) \cZ_t^{\phi_\lambda},	\quad	t \geq 0.
\end{equation}

\begin{rem}\label{rem:NermanviaA}
	Recall that under assumption~\ref{assumpt:Malthusian alpha},
	$1$ is the dominant (Perron--Frobenius) eigenvalue of $\Lap\bmu(\alpha)$ and $\{\bv\}$,
	$\{\bu^\trans\}$ are the bases of its right and left eigenspaces, respectively. Since~\eqref{eq:matrixRec} implies
	\begin{equation*}
		A_{\alpha, k_{\alpha}} = \Lap\bmu(\alpha) A_{\alpha, k_{\alpha}} = A_{\alpha,k_\alpha} \Lap\bmu(\alpha),
	\end{equation*}
	all columns of the matrix $A_{\alpha,k_\alpha}$ must be scalar multiples of the vector $\bv$,
	while all the rows must be scalar multiples of $\bu^\trans$. That is, $A_{\alpha,k_{\alpha}} = c  \bv \bu^\trans$ for some $c \neq 0$. Moreover, by \cite[Proposition 2.2(d)]{Kolesko+al:2025},~\ref{assumpt:LaplaceDomain} implies that $k_\alpha = 1$. In particular, the martingale $(W_t(\alpha))_{t\geq0}$ is $\M_{1\times p}(\C)$-valued. Furthermore, the same argument that gives~\eqref{eq:matrixRec} also implies
	\begin{equation*}
		A_{\alpha,0} (\Ip - \Lap\bmu(\alpha)) + A_{\alpha,1} (-(\Lap\bmu)'(\alpha)) = \Ip
	\end{equation*}
	where $A_{\alpha,0}$ denotes the constant term in the Laurent expansion of $(\Ip-\Lap\bmu(z))^{-1}$ around $z=\alpha$.
	Multiplying both sides by $\bv$ from the right, we get $c\beta \bv = \bv$,
	which implies that $c = 1/\beta$ since $\bv$ is nonzero.
	Therefore,
	\begin{equation*}
	A_{\alpha,1} = \frac1\beta  \bv \bu^\trans.
	\end{equation*}
	In this case,~\eqref{eq:mtg1} becomes
	\begin{equation*}
		W_t(\alpha) = \sum_{u \in \cC_t} e^{-\alpha S(u)} \frac1\beta \bv_{\tau(u)} \bu^\trans = \frac{1}\beta W_t \bu^\trans,
		\quad	t \geq 0
	\end{equation*}
	where $(W_t)_{t\geq0}$ is the Nerman's martingale.
\end{rem}

The following proposition is the main result of \cite{Kolesko+al:2025b}, specialized to the $L^2$ setting
and stated in terms of the assumptions of the present paper.
Note that Lemma~\ref{lem:mtgsasCMJ} below implies the proposition
and thus provides a second proof.

\begin{prop}	\label{Prop:main rslt Tomic's thesis}
Suppose that~\ref{assumpt:Malthusian alpha} and~\ref{assumpt:hreprMoments} hold and let $\lambda \in \Lambda$
with $\Re(\lambda) > \frac\alpha2$.
Then $(W_t(\lambda))$ converges a.\,s.\ and in $L^2$ to some random matrix $W(\lambda)$.
\end{prop}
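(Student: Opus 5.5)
The plan is to write $W_t(\lambda)$ as a sum of centered, pairwise orthogonal increments indexed by the individuals born up to time $t$, show that the sum of their second moments is finite uniformly in $t$, and then apply the $L^2$ martingale convergence theorem. Throughout, $k=k_\lambda$ and $\varepsilon \coloneqq 2\Re(\lambda)-\alpha>0$.

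\emph{Step 1: telescoping.} For $u\in\Tree$ put
\begin{equation*}
Y_u \coloneqq \big(\exp(\lambda,-S(u),k)\otimes \be_{\tau(u)}^\trans\big)\Big(\int \exp(\lambda,-x,k)\otimes \bxi_u(\dd x)\,\Av_\lambda - \Av_\lambda\Big).
\end{equation*}
By \eqref{eq:expMult}, the first term of $Y_u$ equals $\sum_j (\exp(\lambda,-S(uj),k)\otimes\be_{\tau(uj)}^\trans)\Av_\lambda$, the sum over the children $uj$ of $u$. By the definition \eqref{def:comingGen}, $\cC_t$ consists of the children of those $u$ with $S(u)\le t$ that are themselves born after $t$. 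Telescoping along generations therefore gives
\begin{equation*}
W_t(\lambda)=(\Ik\otimes\be_{\tau(\varnothing)}^\trans)\Av_\lambda+\sum_{u\in\Tree,\,S(u)\le t} Y_u .
\end{equation*}

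\emph{Step 2: centering and orthogonality.} Conditionally on $\bigl(\tau(u),S(u)\bigr)$, $Y_u$ has mean zero. Indeed, expanding $\exp(\lambda,-x,k)$ entrywise and applying the first identity in \eqref{eq:matrixRec} yields
\begin{equation*}
\E\Big[\int \exp(\lambda,-x,k)\otimes\bxi(\dd x)\Big]\Av_\lambda=\Av_\lambda.
\end{equation*}
This is the same computation that makes $(W_t(\lambda))$ a martingale in \cite[Theorem 2.1]{Kolesko+al:2025b}. Since $Y_u$ is a function of the life of $u$ and of $(\tau(u),S(u))$, which depend only on strict progenitors of $u$, the $Y_u$ are orthogonal in $L^2$ along any admissible ordering of $\UHTree$, exactly as in Lemma~\ref{lem:mtg}. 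Hence
\begin{equation*}
\E\|W_t(\lambda)-W_0(\lambda)\|_{\mathrm{HS}}^2\le \E\sum_{u\in\Tree}\E\big[\|Y_u\|_{\mathrm{HS}}^2\,\big|\,\tau(u),S(u)\big].
\end{equation*}
To make this rigorous I would repeat the admissible-ordering argument of Lemma~\ref{lem:mtg} for the truncated sums. I expect this bookkeeping to be the only delicate point, but it should be routine.

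\emph{Step 3: bounding the conditional variance.} By \eqref{eq:expNorm}, $\|\exp(\lambda,-S(u),k)\otimes \be_{\tau(u)}^\trans\|\le C(1+S(u))^{k-1}e^{-\Re(\lambda)S(u)}$, and similarly $\|\exp(\lambda,-x,k)\|\le C(1+x)^{k-1}e^{-\Re(\lambda)x}$. Because $\Re\lambda>\alpha/2$ and $k-1\le 2k^*-1$ (when $k^*=1/2$ one uses only $k-1\ge 0$), we have $(1+x)^{k-1}e^{-\Re(\lambda)x}\le C(1+x)^{k^*-1/2}e^{-\frac\alpha2x}$. Assumption~\ref{assumpt:hreprMoments} then gives
\begin{equation*}
\E\big[\|Y_u\|_{\mathrm{HS}}^2\mid \tau(u),S(u)\big]\le C(1+S(u))^{2k-2}e^{-2\Re(\lambda)S(u)}\le C' e^{-(\alpha+\varepsilon/2)S(u)},
\end{equation*}
where in the last step the polynomial factor is absorbed using $S(u)\ge0$.

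\emph{Step 4: summability.} Set $\theta\coloneqq\alpha+\varepsilon/2$. By Step 3 it suffices to show
\begin{equation*}
\E^i\sum_{u\in\Tree}e^{-\theta S(u)}=\sum_{n\ge0}\be_i^\trans\Lap\bmu(\theta)^n\mathbf 1<\infty .
\end{equation*}
Each entry of $\Lap\bmu$ is nonincreasing on $[\alpha,\infty)$, and by \ref{assumpt:Malthusian alpha} at least one entry with positive mass on $(0,\infty)$ is strictly decreasing. Irreducibility then makes the Perron--Frobenius eigenvalue of $\Lap\bmu(\theta)$ strictly smaller than $1$, so the geometric series converges. Alternatively, one can use the many-to-one formula \eqref{eq:manytoone} together with Lemma~\ref{lem:meanviarenewal}, applied to the characteristic $e^{-\frac\varepsilon2 t}\1_{[0,\infty)}(t)\mathbf 1$. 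In either case $\sup_t\E\|W_t(\lambda)\|^2<\infty$.

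\emph{Conclusion.} $(W_t(\lambda))_{t\ge0}$ is an $L^2$-bounded martingale with càdlàg paths; the paths are piecewise smooth between birth times, by \eqref{eq:mtg2}. Doob's convergence theorem, applied entrywise, therefore yields convergence almost surely and in $L^2$ to a limit $W(\lambda)$.
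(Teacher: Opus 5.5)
Your proof is correct and is essentially the paper's own self-contained route. The proposition itself is only cited from \cite[Theorem 2.3]{Kolesko+al:2025b}, but the paper notes that Lemma~\ref{lem:mtgsasCMJ} gives a second proof, and your decomposition is exactly \eqref{eq:mtg3} (indeed $\sum_{S(u)\le t}Y_u=\cZ_0^{\chi_{\lambda,[-t,1)}}$), your Step~2 is Lemma~\ref{lem:mtg}, and your Step~3 is Lemma~\ref{lem:chilambda}(ii). The differences are cosmetic: you conclude via Doob's $L^2$ theorem and $\rho(\Lap\bmu(\theta))<1$, whereas the paper identifies the limit as $(\Ikl\otimes\be_{\tau(\varnothing)}^\trans)\Av_\lambda+\cZ_0^{\chi_{\lambda,(-\infty,1)}}$ through Lemma~\ref{lem:meanviarenewal} and dominated convergence. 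One side remark should be dropped: $k_\lambda-1\le 2k^*-1$ is false in general (e.g.\ the large balanced urn has $k_\lambda=2$, $k^*=\frac12$), but your Step~3 bound only needs $\Re(\lambda)>\frac\alpha2$ and $k^*\ge\frac12$.
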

\begin{proof}[Source]
The proposition follows from \cite[Theorem 2.3]{Kolesko+al:2025b}
once we show that the assumptions here imply those there.
Assumption~\ref{assumpt:Malthusian alpha} here implies (A1) and (A2) there and
\ref{assumpt:hreprMoments}
together with $\Re(\lambda)>\frac\alpha2$ implies that $\lambda$ is in the interior of the domain
where $\Lap\bmu$ is finite. Assumption~\ref{assumpt:hreprMoments} further implies (A3) with $q=2$ in the cited paper.
\end{proof}

For our analysis it is convenient to express not only $(W_t(\lambda))_{t\geq 0}$, but also its limit in terms of a CMJ process.
Therefore, we provide yet another characterization of this martingale.
Consider the random element of $\M_{k_\lambda\times k_\lambda}(\C) \otimes \M_{p\times p}(\C)$ given by
\begin{equation*}
	Y(\lambda) = \int \exp(\lambda, -s, k_\lambda) \otimes\bxi(\dd s) - \Ikl \otimes \Ip
\end{equation*}
and, for an interval $J \subseteq \R$, let $\chi_{\lambda,J} : \Omega_\varnothing \times \R \to \M_{k_\lambda \times 1}(\C) \otimes \M_{p \times p}(\C)$ be given by
\begin{equation*}
	\chi_{\lambda,J}(t) = \1_J(t) (\exp(\lambda, t, k_\lambda)\otimes \Ip) Y(\lambda) \Av_\lambda,	\quad	t \in \R.
\end{equation*}
Note that if $J$ is an interval of the form $[a,b)$ or $(-\infty,b)$ for some $a,b\in\R$, then $\chi_{\lambda,J}$ is c\`adl\`ag. Moreover, we have
\begin{equation*}
	\begin{split}
		\E &\bigg[ \int \exp(\lambda, -s, k_\lambda) \otimes\bxi(\dd s) \bigg] = \int \exp(\lambda, -s, k_\lambda) \otimes\bmu(\dd s) \\
		&= \begin{pmatrix}
			\int e^{-\lambda s} \, \bmu(\dd s) & \int -se^{-\lambda s} \, \bmu(\dd s) & \dots & \int \frac{(-s)^{k_\lambda-1}}{(k_\lambda-1)!} e^{-\lambda s} \, \bmu(\dd s) \\
			0 & \int e^{-\lambda s} \, \bmu(\dd s) & \dots & \int \frac{(-s)^{k_\lambda-2}}{(k_\lambda-2)!} e^{-\lambda s} \, \bmu(\dd s) \\
			\vdots & & \ddots & \vdots \\
			0 &  \dots & & \int e^{-\lambda s} \, \bmu(\dd s)
		\end{pmatrix} \\
		&= \begin{pmatrix}
			\Lap\bmu(\lambda) & (\Lap\bmu)^{(1)}(\lambda) & \dots & \frac{(\Lap\bmu)^{(k_\lambda-1)}(\lambda)}{(k_\lambda-1)!} \\
			0 & \Lap\bmu(\lambda) & \dots & \frac{(\Lap\bmu)^{(k_\lambda - 2)}(\lambda)}{(k_\lambda-2)!} \\
			\vdots & & \ddots & \vdots \\
			0 &  \dots & & \Lap\bmu(\lambda)
		\end{pmatrix}.
	\end{split}
\end{equation*}
In particular,~\eqref{eq:matrixRec} gives
\begin{equation*}
	\E \bigg[ \int \exp(\lambda, -s, k_\lambda) \otimes\bxi(\dd s) \bigg] \Av_\lambda = \Av_\lambda.
\end{equation*}
Consequently, $\E[Y(\lambda) \Av_\lambda] = 0$ and $\chi_{\lambda,J}$ is centered.

\begin{lem}\label{lem:chilambda}
Assume~\ref{assumpt:Malthusian alpha} and~\ref{assumpt:hreprMoments}.
Fix $\lambda \in \Lambda$ and let $J$ be an interval of the form $J = [-a,b) \cap \R$ for some $-\infty \leq a < b < \infty$.
	\begin{enumerate}[\normalfont(i), wide]
		\item The characteristic $\phi_\lambda$ satisfies~\ref{assumpt:dRimean} through~\ref{assumpt:local ui}.
		\item If $\Re(\lambda) > \alpha/2$,
		then the characteristic $\chi_{\lambda,J}$ satisfies~\ref{assumpt:dRimean} through~\ref{assumpt:local ui}.
	\end{enumerate}
\end{lem}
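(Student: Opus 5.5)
The plan is to reduce both parts to Remark~\ref{rem:charAssumpts}. After applying the relevant $\exp$ factors and the constant matrix $\Av_\lambda$, each characteristic is a finite matrix of scalar characteristics supported on a half-line or a bounded interval. Hence it suffices to show
\begin{equation*}
\int e^{-\alpha x}\, \E\Big[\sup_{|y-x|\le 1}\|\psi(y)\|_{\mathrm{HS}}^2\Big]\, \dd x<\infty
\end{equation*}
for $\psi=\phi_\lambda$ and for $\psi=\chi_{\lambda,J}$. Condition~\ref{assumpt:dRimean} then also follows, either from support on $[0,\infty)$ (for $\phi_\lambda$) or by Cauchy--Schwarz on the relevant region, exactly as in Proposition~\ref{prop:char}. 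Throughout we use the bound~\eqref{eq:expNorm} on $\exp(\lambda,t,k)\otimes\I_n$ and the fact that $\|\Av_\lambda\|$ is a constant.

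For (ii), write $\|\chi_{\lambda,J}(y)\|_{\mathrm{HS}}\le C\1_J(y)(1+|y|)^{k_\lambda-1}e^{\Re(\lambda)y}\|Y(\lambda)\|_{\mathrm{HS}}$. We have $\E\|Y(\lambda)\|_{\mathrm{HS}}^2<\infty$: its entries are $\int \frac{(-s)^j}{j!}e^{-\lambda s}\xi^{ij}(\dd s)$, bounded by $C\int(1+s)^{k^*-1/2}e^{-\frac\alpha2 s}\xi^{ij}(\dd s)$ when $\Re\lambda=\alpha/2$, which is controlled by~\ref{assumpt:hreprMoments}. When $\Re\lambda>\alpha/2$, the factor $(1+s)^{j}e^{-(\Re\lambda-\alpha/2)s}$ is bounded, so~\ref{assumpt:hreprMoments} applies again. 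The integral to control is therefore
\begin{equation*}
\int_{J^{+1}} e^{-\alpha x}(1+|x|)^{2k_\lambda}e^{2\Re(\lambda)(x+1)}\,\dd x,
\end{equation*}
where $J^{+1}$ is the $1$-neighbourhood of $J$. Since $J$ is bounded above by $b$, the upper range is finite. On the lower range the integrand decays like $e^{(2\Re\lambda-\alpha)x}$ as $x\to-\infty$, which is integrable precisely because $\Re(\lambda)>\alpha/2$. This is where the hypothesis of (ii) is used.

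For (i), use $\exp(\lambda,t-s,k)=\exp(\lambda,t,k)\exp(\lambda,-s,k)$ to write
\begin{equation*}
\phi_\lambda(t)=\1_{[0,\infty)}(t)\,(\exp(\lambda,t,k_\lambda)\otimes\Ip)\int_{(t,\infty)}\exp(\lambda,-s,k_\lambda)\otimes\bxi(\dd s)\,\Av_\lambda.
\end{equation*}
Take the supremum over $|y-x|\le 1$ with $y\ge0$, and bound the tail integral over $(y,\infty)$ by the one over $(x-1,\infty)$ using absolute values. This gives
\begin{equation*}
\sup_{|y-x|\le1}\|\phi_\lambda(y)\|_{\mathrm{HS}}\le C(1+x)^{k_\lambda-1}e^{\Re(\lambda)x}\int_{(x-1)^+}^\infty(1+s)^{k_\lambda-1}e^{-\Re(\lambda)s}\,\bar\xi(\dd s),
\end{equation*}
where $\bar\xi=\sum_{i,j}\xi^{ij}$. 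The main obstacle is to show that the weighted second moment of the right-hand side, integrated against $e^{-\alpha x}\,\dd x$ over $x\ge -1$, is finite, including in the critical case $\Re\lambda=\alpha/2$ with the polynomial weights. For this I would write $\Re(\lambda)=\alpha/2+\delta$ with $\delta\ge0$ and pull $e^{-\delta s}\le e^{-\delta(x-1)}$ out of the integral. This cancels the $e^{\delta x}$ growth, so the quantity to bound becomes
\begin{equation*}
\int_{-1}^\infty (1+x)^{2k_\lambda-2}\, \E\Big[\Big(\int_{(x-1)^+}^\infty (1+s)^{k_\lambda-1}e^{-\frac\alpha2 s}\,\bar\xi(\dd s)\Big)^2\Big]\dd x .
\end{equation*}
Next, inside the expectation I would use $(1+x)^{k_\lambda-1}\le C(1+s)^{k_\lambda-1}$ for $s\ge x-1$, which turns the $x$-weight into extra $s$-weight. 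Then I would split the inner integral dyadically or apply Cauchy--Schwarz with weight $(1+s)^{-1-\epsilon}$. The goal is to reach a bound of the form $\E\big[\big(\int(1+s)^{k^*-1/2}e^{-\frac\alpha2 s}\bar\xi(\dd s)\big)^2\big]$ after Fubini in $x$. Fubini gains exactly one power of $(1+s)$, namely the length of $\{x: x-1\le s\}$, which matches the exponent $2(k^*-\tfrac12)=2k^*-1$ when $k_\lambda\le k^*$.

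For $\Re\lambda>\alpha/2$ there is no tightness, because $e^{-\delta s}$ absorbs all polynomial factors. The delicate bookkeeping of exponents is only needed for $\lambda\in\dLambda$, where $k_\lambda\le k^*$ by definition of $k^*$. Condition~\ref{assumpt:local ui} then follows from the same domination, since the supremum over a neighbourhood of $t$ is an $L^2$-integrable dominating variable. This finishes both parts.
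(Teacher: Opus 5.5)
There is a genuine gap in part (i). Your part (ii) is fine and is essentially the paper's argument, with one correction: \ref{assumpt:dRimean} for $\chi_{\lambda,J}$ should come from centeredness ($\E[Y(\lambda)\Av_\lambda]=0$), not from Cauchy--Schwarz. When $J$ is unbounded below, Cauchy--Schwarz only gives $e^{-\alpha t}|\E[\chi_{\lambda,J}](t)|\le Ce^{(\gamma-\alpha)t}$, which blows up as $t\to-\infty$.

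\textbf{Where part (i) fails.} You write $\exp(\lambda,t-s,k_\lambda)=\exp(\lambda,t,k_\lambda)\exp(\lambda,-s,k_\lambda)$ and estimate the two factors separately. This replaces the weight $(1+(s-t))^{k_\lambda-1}$, which depends only on the gap $s-t$, by $(1+x)^{k_\lambda-1}(1+s)^{k_\lambda-1}$, and the loss is fatal.
\begin{itemize}
\item In the critical case, after your step $(1+x)^{k_\lambda-1}\le C(1+s)^{k_\lambda-1}$, each $s$-integral carries $(1+s)^{2k_\lambda-2}$, and Fubini adds a factor $2+s\wedge r$. What you actually need is $\E\big[\big(\int(1+s)^{2k_\lambda-3/2}e^{-\frac\alpha2 s}\,\xi^{ij}(\dd s)\big)^2\big]<\infty$.
\item The total degree is therefore $4k_\lambda-3$, not $2k^*-1$; your count omits the $x$-weight. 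This exceeds \ref{assumpt:hreprMoments} whenever $k_\lambda=k^*\ge2$.
\item The loss already sits in your pointwise bound. For $\bar\xi=N\delta_T$, your quantity is of order $\E[N^2(1+T)^{4k_\lambda-3}e^{-\alpha T}]$, whereas \ref{assumpt:hreprMoments} controls only $\E[N^2(1+T)^{2k^*-1}e^{-\alpha T}]$. So no later dyadic splitting or weighted Cauchy--Schwarz can repair it.
\item For $\Re\lambda>\frac\alpha2$, your claim that $e^{-\delta s}$ absorbs the polynomial factors contradicts your previous step, which spent all of $e^{-\delta s}$ cancelling $e^{\delta x}$. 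What remains is the critical-case expression. It needs an extra $(1+s)^{1/2}$ per variable even for $k_\lambda=1$, which \ref{assumpt:hreprMoments} does not give when $k^*=\frac12$.
\item If you do not pull $e^{-\delta s}$ out, the surviving decay after the $x$-integration is only $e^{-\delta|s-r|}$. That does nothing against $(1+x)^{k_\lambda-1}(1+s)^{k_\lambda-1}$ near the diagonal $s\approx r\approx x$.
\end{itemize}

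\textbf{The missing idea.} Keep $t-s$ together, as the paper does. For $s>t$ the entries of $\exp(\lambda,t-s,k_\lambda)$ are bounded by $C(1+(s-t))^{k_\lambda-1}e^{\Re(\lambda)(t-s)}$.
\begin{itemize}
\item If $\Re\lambda>\frac\alpha2$, pick $\gamma\in(\frac\alpha2,\Re\lambda)$ and bound this by $Ce^{\gamma(t-s)}$. After Fubini, the $x$-integral satisfies $\int_{-1}^{s\wedge r+1}e^{(2\gamma-\alpha)x}\,\dd x\le Ce^{(2\gamma-\alpha)(s\wedge r)}\le Ce^{(\gamma-\frac\alpha2)(s+r)}$. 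This returns exactly $\E[(\int e^{-\frac\alpha2 s}\xi^{ij}(\dd s))^2]$, with no polynomial loss.
\item If $\Re\lambda=\frac\alpha2$, use $s-t\le s-x+1$ to get the weight $(2+s-x)^{k_\lambda-1}(2+r-x)^{k_\lambda-1}$. Its integral over $x\in[-1,s\wedge r+1]$ is at most $(3+s)^{k_\lambda-1}(3+r)^{k_\lambda-1}(2+s\wedge r)$. With $2+s\wedge r\le\sqrt{(2+s)(2+r)}$, this gives exponent $k_\lambda-\frac12\le k^*-\frac12$ per variable, which is precisely \ref{assumpt:hreprMoments}.
\end{itemize}
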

\begin{proof}\begin{enumerate}[\normalfont(i), wide]
\item By Remark~\ref{rem:charAssumpts}, it is enough to show that
\begin{equation*}
	\int e^{-\alpha x} \E\Big[\sup_{|x-t|\leq1} \|\phi_\lambda(t)\|_{\mathrm{HS}}^2\Big] \dd x < \infty.
\end{equation*}
Since
\begin{equation*}
	\|\phi_\lambda(t)\|_{\mathrm{HS}}^2
	\leq \1_{[0,\infty)}(t) \bigg\|\int_{(t,\infty)} \exp(\lambda,t-s,k_\lambda)\otimes \bxi(\dd s) \bigg\|^2_{\mathrm{HS}} \|\Av_\lambda\|_{\mathrm{HS}}^2,
\end{equation*}
it is in turn sufficient to show that
\begin{equation}\label{eq:chilGoal}
\int_{-1}^\infty e^{-\alpha x} \E\bigg[\sup_{|x-t|\leq1}
\bigg\|\int_{(t,\infty)} \exp(\lambda,t-s,k_\lambda)\otimes \bxi(\dd s) \bigg\|^2_{\mathrm{HS}}\bigg] \, \dd x < \infty.
\end{equation}
Here,
\begin{multline*}
\int_{(t,\infty)} \!\! \exp(\lambda, t-s, k_\lambda) \otimes \bxi(\dd s) \\
= \begin{pmatrix}
\int_{(t,\infty)} e^{\lambda(t-s)} \, \bxi(\dd s) & \int_{(t,\infty)} (t-s)e^{\lambda(t-s)} \, \bxi(\dd s)
& \ldots & \int_{(t,\infty)} \frac{(t-s)^{k_\lambda-1}}{(k_\lambda-1)!} e^{\lambda(t-s)} \, \bxi(\dd s) \\
0 & \int_{(t,\infty)} e^{\lambda(t-s)} \, \bxi(\dd s) & \ldots & \int_{(t,\infty)} \frac{(t-s)^{k_\lambda-2}}{(k_\lambda-2)!}e^{\lambda(t-s)} \, \bxi(\dd s) \\
\vdots & & \ddots & \vdots \\
0 & \ldots && \int_{(t,\infty)} e^{\lambda(t-s)} \, \bxi(\dd s)
\end{pmatrix}.
\end{multline*}
Therefore, for some constant $C_1$ depending on $k_\lambda$,
\begin{equation}\label{eq:chilTemp'}
\begin{split}
\bigg\| \int \limits_{(t,\infty)} \!\! \exp(\lambda, t-s, k_\lambda) \otimes \bxi(\dd s) \bigg\|_{\mathrm{HS}}^2 
&\leq C_1 \bigg\| \int \limits_{(t,\infty)} \!\! (1+|t-s|)^{k_\lambda-1} e^{\Re(\lambda)(t-s)} \, \bxi(\dd s) \bigg\|_{\mathrm{HS}}^2 \\
&=C_1 \sum_{i,j=1}^p \! \bigg( \! \int \limits_{(t,\infty)} \!\! (1+|t-s|)^{k_\lambda-1} e^{\Re(\lambda)(t-s)} \, \xi^{ij}(\dd s)\bigg)^{\!\!2}.
\end{split}
\end{equation}

Assume first that $\Re(\lambda) > \frac\alpha2$ and take $\gamma \in (\frac\alpha2,\Re(\lambda))$.
There is a constant $C_2$ (depending on $\gamma,k_\lambda$) such that, for any $x \leq 0$,
\begin{equation*}
(1+|x|)^{k_\lambda-1}e^{\Re(\lambda)x} \leq C_2 e^{\gamma x},
\end{equation*}
hence, by~\eqref{eq:chilTemp'},
\begin{equation*}
\bigg\| \int_{(t,\infty)} \exp(\lambda, t-s, k_\lambda) \otimes \bxi(\dd s) \bigg\|_{\mathrm{HS}}^2
\leq C_1 C_2^2 \sum_{i,j=1}^p \bigg( \int_{(t,\infty)} e^{\gamma (t-s)} \, \xi^{ij}(\dd s) \bigg)^{\!\!2} 
\end{equation*}
and thus
\begin{multline}\label{eq:chilTemp}
	\E\bigg[\sup_{|x-t|\leq1} \bigg\|\int_{(t,\infty)} \exp(\lambda,t-s,k_\lambda)\otimes \bxi(\dd s) \bigg\|^2_{\mathrm{HS}}\bigg] \\
	\begin{aligned}	
	&\leq \E\bigg[\sup_{|x-t|\leq1} C_1 C_2^2 \sum_{i,j=1}^p \bigg( \int_{(t,\infty)} e^{\gamma (t-s)} \, \xi^{ij}(\dd s) \bigg)^{\!\!2}\bigg] \\
	&\leq C_1 C_2^2  \sum_{i,j=1}^p e^{2\gamma (x+1)} \E\bigg[ \bigg( \int_{(x-1,\infty)} e^{-\gamma s} \, \xi^{ij}(\dd s) \bigg)^{\!\!2}\bigg].
	\end{aligned}
\end{multline}
For any $i,j\in[p]$,
\begin{multline*}
	\int_{-1}^\infty e^{(2\gamma-\alpha) x} \E\bigg[ \bigg( \int_{(x-1,\infty)} e^{-\gamma s} \, \xi^{ij}(\dd s) \bigg)^{\!\!2}\bigg] \, \dd x \\
	\begin{aligned}
	&= \E\bigg[ \iint e^{-\gamma s} e^{-\gamma r}  \int_{-1}^{s\wedge r+1} e^{(2\gamma - \alpha) x} \, \dd x \, \xi^{ij}(\dd s) \, \xi^{ij}(\dd r) \bigg] \\
	&\leq \frac{e^{2\gamma - \alpha}}{2\gamma - \alpha}\E\bigg[ \iint e^{-\gamma s} e^{-\gamma r} e^{(2\gamma-\alpha) s \wedge r} \, \xi^{ij}(\dd s) \, \xi^{ij}(\dd r)\bigg] \\
	&\leq \frac{e^{2\gamma - \alpha}}{2\gamma - \alpha}\E\bigg[ \iint e^{-\gamma s} e^{-\gamma r} e^{\big(\gamma-\frac\alpha2\big) (s + r)} \, \xi^{ij}(\dd s) \, \xi^{ij}(\dd r)\bigg] \\
	&= \frac{e^{2\gamma - \alpha}}{2\gamma - \alpha}\E\bigg[ \bigg(\int e^{-\frac\alpha2 s} \, \xi^{ij}(\dd s) \! \bigg)^{\!\!2}\bigg] < \infty,
	\end{aligned}
\end{multline*}
where the last step follows from~\ref{assumpt:hreprMoments}. This together with~\eqref{eq:chilTemp} implies~\eqref{eq:chilGoal}.

Next, consider the case $\Re(\lambda) = \frac\alpha2$. For any $i,j\in[p]$,
\begin{multline*}
\int_{-1}^\infty e^{-\alpha x} \E\bigg[\sup_{|x-t|\leq1} e^{\alpha t}
\bigg( \! \int \limits_{(t,\infty)} \!\! (1+|t - s|)^{k_\lambda-1} e^{-\frac\alpha2s} \, \xi^{ij}(\dd s)\bigg)^{\!\!2}\bigg] \, \dd x \\
	\begin{aligned}	
		&\leq e^\alpha  \int_{-1}^\infty \E\bigg[ \bigg( \! \int \limits_{(x-1,\infty)} \!\! (2 + s -x)^{k_\lambda-1} e^{-\frac\alpha2s}
		\, \xi^{ij}(\dd s)\bigg)^{\!\!2}\bigg] \, \dd x \\
		&= e^\alpha \E\bigg[ \iiint_{-1}^{r\wedge s + 1}(2 + s -x)^{k_\lambda-1}(2 + r -x)^{k_\lambda-1}
		\, \dd x \, e^{-\frac\alpha2s} e^{-\frac\alpha2 r} \, \xi^{ij}(\dd s) \, \xi^{ij}(\dd r)\bigg] \\
		&\leq e^\alpha \E\bigg[ \iiint_{-1}^{r\wedge s + 1}
		\, \dd x\, (3 + s)^{k_\lambda-1}(3 + r)^{k_\lambda-1} \, e^{-\frac\alpha2s} e^{-\frac\alpha2 r} \, \xi^{ij}(\dd s) \, \xi^{ij}(\dd r)\bigg] \\
		&\leq e^\alpha 2^{2k_\lambda-2}
		\E\bigg[ \iint (2 + r\wedge s) (2 + s)^{k_\lambda-1}(2 + r)^{k_\lambda-1} \, e^{-\frac\alpha2s} e^{-\frac\alpha2 r}
		\, \xi^{ij}(\dd s) \xi^{ij}(\dd r)\bigg] \\
		&\leq e^\alpha 2^{4k_\lambda - 3} \E\bigg[\bigg( \! \int \Big(1+\frac{s}2\Big)^{k_\lambda-\frac12} e^{-\frac\alpha2s} \, \xi^{ij}(\dd s)\bigg)^{\!\!2}\bigg] < \infty,
	\end{aligned}
\end{multline*}
where the last step follows from~\ref{assumpt:hreprMoments} and the penultimate one from the inequality $(1 + a \wedge b) \leq \sqrt{(1+a)(1+b)}$. This together with~\eqref{eq:chilTemp} implies~\eqref{eq:chilGoal}.

\item Recall that $\chi_{\lambda,J}$ is centered and thus trivially satisfies~\ref{assumpt:dRimean}.
By Remark~\ref{rem:charAssumpts} it is enough to show that
\begin{equation*}
	\int e^{-\alpha x} \E\Big[\sup_{|x-t|\leq1} \|\chi_{\lambda,J}(t)\|_{\mathrm{HS}}^2\Big] \dd x < \infty.
\end{equation*}
Pick $\gamma \in (\frac\alpha2,\Re(\lambda))$. By~\eqref{eq:expNorm} there is a constant $C_1$,
depending on $\gamma,k_\lambda,p,b$, such that, for all $t \in (-\infty,b)$,
\begin{equation*}
	\|\exp(\lambda,t,k_\lambda) \otimes \Ip \|_{\mathrm{HS}} \leq C_1 e^{\gamma t}.
\end{equation*}
Moreover, since
\begin{equation*}
Y(\lambda) = \begin{pmatrix}
	\int e^{-\lambda s} \, \bxi(\dd s) & \int -se^{-\lambda s} \, \bxi(\dd s) & \dots &
	\int \frac{(-s)^{k_\lambda-1}}{(k_\lambda-1)!} e^{-\lambda s} \, \bxi(\dd s) \\
	0 & \int e^{-\lambda s} \, \bxi(\dd s) & \dots & \int \frac{(-s)^{k_\lambda-2}}{(k_\lambda-2)!} e^{-\lambda s} \, \bxi(\dd s) \\
	\vdots & & \ddots & \vdots \\
	0 &  \dots & & \int e^{-\lambda s} \, \bxi(\dd s)
\end{pmatrix} - \Ikl \otimes \Ip,
\end{equation*}
we have that, for some constants $C_2, C_3$ depending on $\lambda, k_\lambda$,
\begin{equation*}
\|Y(\lambda)\|^2_{\mathrm{HS}} \leq C_2 \bigg\|\int (1+|s|^{k_\lambda -1}) e^{-\lambda s} \, \bxi(\dd s)\bigg\|^2_{\mathrm{HS}}
\leq C_3 \sum_{i,j=1}^p \bigg( \int e^{-\frac\alpha 2 s} \,\xi^{ij}(\dd s) \! \bigg)^{\!2}. 
\end{equation*}
Therefore, assumption~\ref{assumpt:hreprMoments} implies that $\E[\|Y(\lambda)\|^2_{\mathrm{HS}}] < \infty$ and
\begin{equation*}
\begin{split}
\E[\|\chi_{\lambda,J}(t)\|_{\mathrm{HS}}^2]
&\leq \1_{(-\infty,b)}(t) \|\exp(\lambda,t,k_\lambda) \otimes \Ip\|_{\mathrm{HS}}^2 \,
\E[\|Y_\lambda\|^2_{\mathrm{HS}}] \, \|\Av_\lambda\|_{\mathrm{HS}}^2 \\
& \leq \1_{(-\infty,b)}(t) C_4 e^{2\gamma t}
\end{split}
\end{equation*}
for all $t \in \R$ and some finite constant $C_4$. Since $2\gamma-\alpha > 0$,
\begin{equation*}
\int e^{-\alpha x} \E\Big[\sup_{|x-t|\leq1} \|\chi_{\lambda,J}(t)\|_{\mathrm{HS}}^2\Big] \, \dd x
\leq C_4 e^{2\gamma} \int_{-\infty}^{b+1} e^{(2\gamma-\alpha) x} \, \dd x < \infty
\end{equation*}
as claimed.
\end{enumerate}
\end{proof}

\begin{lem}\label{lem:mtgsasCMJ}
Assume that~\ref{assumpt:Malthusian alpha} and~\ref{assumpt:hreprMoments} hold.
For $\lambda \in \Lambda$ with $\Re(\lambda) > \frac\alpha2$,
the martingale $(W_t(\lambda))_{t\geq 0}$ satisfies
	\begin{equation}\label{eq:mtg3}
		W_t(\lambda) = (\Ikl \otimes \be_{\tau(\varnothing)}^\trans) \Av_\lambda + \cZ_0^{\chi_{\lambda,[-t,1)}},	\quad	t \geq 0
	\end{equation}
	and converges a.\,s.\ and in $L^2$ to
	\begin{equation}\label{eq:mtgLimit}
		W(\lambda) = (\Ikl \otimes \be_{\tau(\varnothing)}^\trans) \Av_\lambda + \cZ_0^{\chi_{\lambda,(-\infty,1)}}.
	\end{equation}
	In particular, $\Ev[W(\lambda)] = \Av_\lambda$,
	with the convention introduced in~\eqref{eq:mean of matrix-valued CMJ}.
\end{lem}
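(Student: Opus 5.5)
The plan is to verify the identity \eqref{eq:mtg3} pathwise by a telescoping argument over the coming generation, and then to obtain the limit from Lemma~\ref{lem:mtg} together with Lemma~\ref{lem:chilambda}(ii).

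For the identity, I would expand the CMJ process at time $0$:
\begin{equation*}
\cZ_0^{\chi_{\lambda,[-t,1)}} = \sum_{u\in\Tree} (\Ikl\otimes\be_{\tau(u)}^\trans)\1_{[-t,1)}(-S(u))(\exp(\lambda,-S(u),k_\lambda)\otimes\Ip)Y_u(\lambda)\Av_\lambda .
\end{equation*}
Since birth times are nonnegative, the indicator is just $\1_{\{S(u)\le t\}}$. The mixed product rule for Kronecker products gives
\begin{equation*}
(\Ikl\otimes\be_{\tau(u)}^\trans)(\exp(\lambda,-S(u),k_\lambda)\otimes\Ip) = \exp(\lambda,-S(u),k_\lambda)\otimes\be_{\tau(u)}^\trans .
\end{equation*}
Expanding $Y_u(\lambda)$ and using $\exp(\lambda,-S(u),k)\exp(\lambda,-s,k)=\exp(\lambda,-S(u)-s,k)$ from \eqref{eq:expMult}, the term for $u$ becomes
\begin{equation*}
\sum_{j}(\exp(\lambda,-S(uj),k_\lambda)\otimes\be_{\tau(uj)}^\trans)\Av_\lambda-(\exp(\lambda,-S(u),k_\lambda)\otimes\be_{\tau(u)}^\trans)\Av_\lambda .
\end{equation*}
Here $(\Ikl\otimes \be_{\tau(u)}^\trans)\int \cdot\otimes\bxi_u(\dd s)$ picks out the row of $\bxi_u$ indexed by $\tau(u)$, whose atoms are exactly the children $uj$ with their types. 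Summing over $\{u\in\Tree:S(u)\le t\}$, every individual born by time $t$ other than $\varnothing$ appears once with a plus sign as a child and once with a minus sign as a parent. After cancellation, what survives is the sum over children born after $t$ of parents born by $t$, that is over $\cC_t$, minus the root term $(\Ikl\otimes\be_{\tau(\varnothing)}^\trans)\Av_\lambda$. By \eqref{eq:mtg1} this is $W_t(\lambda)$ minus the root term, which gives \eqref{eq:mtg3}.

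To make the cancellation rigorous when infinitely many individuals are born by time $t$, I would use an admissible ordering and Lemma~\ref{lem:mtg}: the series converges unconditionally in $L^2$. By Lemma~\ref{lem:chilambda}(ii) the characteristic $\chi_{\lambda,[-t,1)}$ is centered and satisfies \ref{assumpt:dRivar}. I would then truncate to generations $\le n$ and let $n\to\infty$. The boundary term is the sum over $u$ with $|u|=n+1$ and $S(u)\le t$ of the exponential terms. I expect this boundary term to be the main obstacle, and my first attempt would be to show it tends to $0$ in $L^1$ via the many-to-one formula \eqref{eq:manytoone}. For $\Re\lambda>\alpha/2>0$ and $S(u)\le t$, the factor $e^{-\lambda S(u)}$ is bounded by a polynomial in $t$ times a constant. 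So it suffices that the expected number of individuals in generation $n$ born by $t$ tends to $0$ as $n\to\infty$. This holds because $\E[\#\{|u|=n, S(u)\le t\}]\le e^{\theta t}\|\Lap\bmu(\theta)^n\|$ for large $\theta$, and the spectral radius of $\Lap\bmu(\theta)$ is less than $1$ for $\theta>\alpha$.

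For the convergence, each entry of $\cZ_0^{\chi_{\lambda,[-t,1)}}$ is a sum of centered, orthogonal contributions. The difference $\cZ_0^{\chi_{\lambda,(-\infty,1)}}-\cZ_0^{\chi_{\lambda,[-t,1)}}=\cZ_0^{\chi_{\lambda,(-\infty,-t)}}$ is again a CMJ process with centered characteristic. By Lemma~\ref{lem:mtg}, its second moment equals $\Ev[\cZ_0^{\Var[\chi_{\lambda,(-\infty,-t)}]}]$. This is bounded via Lemma~\ref{lem:meanviarenewal} by $\bv_i\sum_n\E^i[e^{\alpha S_n}\,\cdot\,\E\|\chi_{\lambda,(-\infty,-t)}(-S_n)\|^2]$, whose integrand is at most $C e^{(\alpha-2\gamma)S_n}\1_{\{S_n>t\}}$. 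The renewal bound then gives a constant times $\int_t^\infty e^{(\alpha-2\gamma)s}\,\dd s\to0$, so the convergence holds in $L^2$. Almost sure convergence follows from the martingale property of $(W_t(\lambda))_{t\ge0}$, cited from \cite{Kolesko+al:2025b}, together with $L^2$-boundedness. Finally, $\Ev[W(\lambda)]=\Av_\lambda$ holds because the CMJ term is centered: it is an $L^2$ limit of centered martingale partial sums.
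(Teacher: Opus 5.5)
Your proposal is correct and follows essentially the same route as the paper: the telescoping over $\cC_t$ gives \eqref{eq:mtg3}, and $L^2$ convergence comes from applying Lemma~\ref{lem:mtg} to the tail characteristic $\chi_{\lambda,(-\infty,-t)}$. The differences are minor. The paper justifies the rearrangement by noting that a.s.\ only finitely many individuals are born by time $t$; your own bound $\sum_n e^{\theta t}\|\Lap\bmu(\theta)^n\|<\infty$ already shows this, so your generation truncation is unnecessary. The paper also uses dominated convergence where you use an explicit renewal estimate, and your explicit martingale-convergence argument for the a.s.\ part supplies a step the paper leaves implicit.
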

\begin{proof}
Lemma~\ref{lem:chilambda} together with Lemma~\ref{lem:mtg} imply that
the variables $\cZ_0^{\chi_{\lambda,[-t,1)}}, \cZ_0^{\chi_{\lambda,(-\infty,1)}}$
are well-defined as unconditional limits in the a.\,s.\ and $L^2$ sense. Observe that, for any $t \geq 0$,
\begin{multline*}
\cZ_0^{\chi_{\lambda,[-t,1)}}
= \sum_{u \in \Tree} \1_{[-t,1)}(- S(u)) (\Ikl \otimes \be_{\tau(u)}^\trans) (\exp(\lambda, -S(u), k_\lambda) \otimes \Ip) \, Y_u(\lambda) \, \Av_\lambda \\
	\begin{aligned}
	= &\sum_{u\in\Tree} \1_{\{S(u) \leq t\}} \big(\exp(\lambda, -S(u), k_\lambda) \otimes \be_{\tau(u)}^\trans\big) \bigg(\int \exp(\lambda, -s, k_\lambda) \otimes \bxi_u(\dd s) - \Ikl \otimes \Ip\bigg) \, \Av_\lambda \\
	= &\sum_{u\in\Tree} \1_{\{S(u) \leq t\}} \int \exp(\lambda, -s-S(u), k_\lambda) \otimes (\be_{\tau(u)}^\trans \bxi_u)(\dd s) \, \Av_\lambda \\
	&- \sum_{u\in\Tree} \1_{\{S(u) \leq t\}} (\exp(\lambda, -S(u), k_\lambda) \otimes \be_{\tau(u)}^\trans) \, \Av_\lambda \\
	= &\sum_{u \in \Tree} \1_{\{S(u) \leq t\}} \sum_{k=1}^{N_u} (\exp(\lambda, -S(uk), k_\lambda) \otimes \be_{\tau(uk)}^\trans) \, \Av_\lambda \\
	&- \sum_{u\in\Tree} \1_{\{S(u) \leq t\}} (\exp(\lambda, -S(u), k_\lambda) \otimes \be_{\tau(u)}^\trans) \, \Av_\lambda \\
	= &\sum_{u\in\Tree\setminus\{\varnothing\}} \!\!\! \big(\1_{\{S(u|_{|u|-1})\leq t\}} - \1_{\{S(u) \leq t\}} \big) (\exp(\lambda, -S(u), k_\lambda) \otimes \be_{\tau(u)}^\trans) \, \Av_\lambda \\
	&- (\exp(\lambda, 0, k_\lambda) \otimes \be_{\tau(u)}^\trans) \, \Av_\lambda,
	\end{aligned}
\end{multline*}
	where all the manipulations are justified by the fact that all the series have a.\,s.\ only finitely many non-zero terms.
	Finally, note that $\1_{\{S(u|_{|u|-1})\leq t\}} - \1_{\{S(u)\leq t\}} = \1_{\{u\in\cC_t\}}$. Therefore,
	\begin{equation*}
		\cZ_0^{\chi_{\lambda,[-t,1)}} = \sum_{u\in\cC_t} (\exp(\lambda, -S(u),k_\lambda) \otimes \be_{\tau(u)}^\trans ) \Av_\lambda - (\Ikl \otimes \be_{\tau(\varnothing)}^\trans) \Av_\lambda, 
	\end{equation*} 
	which, in view of~\eqref{eq:mtg1}, proves~\eqref{eq:mtg3}.	\smallskip
	
	To show the $L^2$-convergence,
	observe that
	\begin{equation*}
		\chi_{\lambda,(-\infty,1)}(s) - \chi_{\lambda,[-t,1)}(s) = \chi_{\lambda,(-\infty,-t)}(s).
	\end{equation*}
	Fix any $x\in \R^{k_\lambda}$, $y \in \R^p$. Then the characteristic
	\begin{equation*}
		\chi_{\lambda,x,y,-t} = (x^\trans \otimes \I_p) \chi_{\lambda,(-\infty,-t)} (1 \otimes y)
	\end{equation*}
	is $\C^p$-valued, centered and satisfies~\ref{assumpt:dRivar}. Therefore, by Lemma~\ref{lem:mtg},
	\begin{equation*}
		\Ev\big[\big|(x^\trans \otimes \I_p) \cZ_0^{\chi_{\lambda, (-\infty,-t)}} (1 \otimes y)\big|^2 \big] = \Ev\big[\big|\cZ_0^{\chi_{\lambda,x,y,-t}}\big|^2\big] = \Ev\Big[\cZ_0^{\E[|\chi_{\lambda,x,y,-t}|^2]}\Big].
	\end{equation*}
	However, $\E[|\chi_{\lambda,x,y,-t}|^2] \leq \E[|\chi_{\lambda,x,y,1}|^2]$ entry-wise.
	Therefore, by the dominated convergence theorem,
	\begin{equation*}
		\lim_{t\to\infty} \Ev \Big[\cZ_0^{\E[|\chi_{\lambda,x,y,-t}|^2]}\Big]
		= \Ev \bigg[ \sum_{u\in\Tree} \be_{\tau(u)}^\trans \lim_{t\to\infty}\E\big[ |\chi_{\lambda,x,y,-t}|^2\big] (-S(u)) \bigg] = 0.
	\end{equation*}
	Since $x,y$ were arbitrary, we obtain
	\begin{equation*}
		\Ev\big[\| \cZ_0^{\chi_{\lambda,(-\infty,1)}} - \cZ_0^{\chi_{\lambda,[-t,1)}}\|^2\big]
		= \Ev\big[\| \cZ_0^{\chi_{\lambda,(-\infty,-t)}} \|^2\big] \to 0	\quad	\text{as } t \to \infty,
	\end{equation*}
	which completes the proof.	
\end{proof}

\begin{cor}\label{lem:mtgLasCMJ}
Let~\ref{assumpt:Malthusian alpha} and~\ref{assumpt:hreprMoments} hold.
If $\Re(\lambda) > \frac\alpha2$, then
	\begin{equation}\label{eq:mtgLim}
		W(\lambda) = \exp(\lambda, -t, k_\lambda) \cZ_t^{\phi_\lambda + \chi_\lambda},
	\end{equation}
	where $\chi_\lambda \coloneqq \chi_{\lambda, (-\infty,0)}$.
\end{cor}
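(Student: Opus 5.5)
The plan is to reduce the claim to \eqref{eq:mtg2} and Lemma~\ref{lem:mtgsasCMJ}. Fix $t \geq 0$. By \eqref{eq:mtg2}, $W_t(\lambda) = \exp(\lambda,-t,k_\lambda)\cZ_t^{\phi_\lambda}$. By Lemma~\ref{lem:chilambda}, both $\phi_\lambda$ and $\chi_\lambda = \chi_{\lambda,(-\infty,0)}$ satisfy \ref{assumpt:dRimean}--\ref{assumpt:local ui}, so $\cZ_t^{\phi_\lambda+\chi_\lambda} = \cZ_t^{\phi_\lambda} + \cZ_t^{\chi_\lambda}$ is well defined, using Lemmas~\ref{lem:well-def} and~\ref{lem:mtg}. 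It therefore suffices to prove
\begin{equation*}
W(\lambda) - W_t(\lambda) = \exp(\lambda,-t,k_\lambda)\,\cZ_t^{\chi_\lambda}.
\end{equation*}

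For the left-hand side I use \eqref{eq:mtg3} and \eqref{eq:mtgLimit}, together with the decomposition $\chi_{\lambda,(-\infty,1)} = \chi_{\lambda,[-t,1)} + \chi_{\lambda,(-\infty,-t)}$ already used in the proof of Lemma~\ref{lem:mtgsasCMJ}. This gives $W(\lambda)-W_t(\lambda) = \cZ_0^{\chi_{\lambda,(-\infty,-t)}}$. Here I need that the unconditional $L^2$/a.s.\ series from Lemma~\ref{lem:mtg} are additive in the characteristic, which is immediate from unconditional convergence. Writing the series out gives
\begin{equation*}
\cZ_0^{\chi_{\lambda,(-\infty,-t)}} = \sum_{u\in\Tree}\1_{\{S(u)>t\}}\,(\exp(\lambda,-S(u),k_\lambda)\otimes \be_{\tau(u)}^\trans)\,Y_u(\lambda)\,\Av_\lambda .
\end{equation*}
This uses the mixed-product rule $(\Ikl\otimes\be_{\tau(u)}^\trans)(\exp(\lambda,s,k_\lambda)\otimes\Ip) = \exp(\lambda,s,k_\lambda)\otimes\be_{\tau(u)}^\trans$.

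For the right-hand side I expand $\cZ_t^{\chi_\lambda}$ the same way. The indicator $\1_{(-\infty,0)}(t-S(u))$ is exactly $\1_{\{S(u)>t\}}$. By \eqref{eq:expMult} and the mixed-product rule,
\begin{equation*}
\exp(\lambda,t-S(u),k_\lambda)\otimes\be_{\tau(u)}^\trans = (\exp(\lambda,t,k_\lambda)\otimes 1)\,(\exp(\lambda,-S(u),k_\lambda)\otimes\be_{\tau(u)}^\trans).
\end{equation*}
The deterministic left factor $\exp(\lambda,t,k_\lambda)$ can be pulled out of the unconditionally convergent series, since left multiplication by a fixed matrix is continuous in $L^2$. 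This yields $\cZ_t^{\chi_\lambda} = \exp(\lambda,t,k_\lambda)\,\cZ_0^{\chi_{\lambda,(-\infty,-t)}}$. Multiplying by $\exp(\lambda,-t,k_\lambda)$ and applying \eqref{eq:expMult} once more gives the required identity, and hence \eqref{eq:mtgLim}.

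The only point needing care is the bookkeeping of the Kronecker products: the characteristic takes values in $\M_{k_\lambda\times1}\otimes\M_{p\times p}$, the process takes values in $\M_{k_\lambda\times p}$, and I must check that $\exp(\lambda,t,k_\lambda)$ acts only on the $k_\lambda$-block factor, i.e.\ that it is $\exp(\lambda,t,k_\lambda)\otimes 1$ on the left. The second point is justifying termwise manipulations. Both are routine given that Lemma~\ref{lem:mtg} gives unconditional a.s.\ and $L^2$ convergence.
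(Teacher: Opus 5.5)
Your proposal is correct and is essentially the paper's own argument. The paper likewise obtains $W(\lambda)-W_t(\lambda)=\cZ_0^{\chi_{\lambda,(-\infty,-t)}}$ from Lemma~\ref{lem:mtgsasCMJ} and identifies it with $\exp(\lambda,-t,k_\lambda)\cZ_t^{\chi_\lambda}$ through the shift identity $(\exp(\lambda,-t,k_\lambda)\otimes\Ip)\chi_\lambda(s)=\chi_{\lambda,(-\infty,-t)}(s-t)$. That identity is your \eqref{eq:expMult} plus mixed-product step, applied directly to the characteristics instead of to the expanded series; the paper then concludes with \eqref{eq:mtg2}, exactly as you do.
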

\begin{proof}
	By Lemma~\ref{lem:mtgsasCMJ}, $W(\lambda) - W_t(\lambda) = \cZ_0^{\chi_{\lambda, (-\infty,-t)}}$. Observe that for any $s\in\R$, $t\geq 0$,
	\begin{equation*}
		\begin{split}
			(\exp(\lambda, -t, k_\lambda) \otimes \Ip) \chi_\lambda(s)
			&= \1_{(-\infty,0)}(s)(\exp(\lambda, s-t, k_\lambda) \otimes \Ip) Y(\lambda) \Av_\lambda \\
			&= \chi_{\lambda,(-\infty,-t)} (s-t).
		\end{split}
	\end{equation*}
	Thus,
	\begin{equation*}
		\begin{split}
			\cZ_0^{\chi_{\lambda, (-\infty,-t)}}
			&= \sum_{u\in\Tree} (\Ikl \otimes \be_{\tau(u)}^\trans ) \chi_{\lambda, (-\infty, -t),u} (-S(u)) \\
			& = \sum_{u\in\Tree} (\Ikl \otimes \be_{\tau(u)}^\trans ) (\exp(\lambda, -t, k_\lambda) \otimes \Ip) \chi_{\lambda,u} (t-S(u)) \\
			& = \sum_{u\in\Tree} (\exp(\lambda, -t, k_\lambda) \otimes 1)(\Ikl \otimes \be_{\tau(u)}^\trans) \chi_{\lambda,u}(t-S(u)) \\
			& = \exp(\lambda, -t, k_\lambda) \cZ_t^{\chi_\lambda},
		\end{split}
	\end{equation*}
	which together with~\eqref{eq:mtg2} finishes the proof.
\end{proof}

\subsection{The mean expansion}

The series expansions of $(\Ip - \Lap\bmu(z))^{-1}$ in the neighborhoods of its roots play a crucial role
in understanding the mean of a CMJ process counted with a characteristic $\varphi$.
In \cite{Kolesko+al:2025}, the asymptotic behavior of $m_t^\varphi = \Ev[\cZ_t^\varphi]$ was established by studying the solutions to Markov renewal equations. In the non-lattice case, it was additionally assumed that
\begin{align}	\label{eq:sup||(Ip-Lmu(theta+ieta)^-1||<infty}
\sup_{\theta \geq \vartheta,\, \eta \geq \eta_0} \|(\Ip-\Lap\bmu(\theta+\imag \eta))^{-1}\| < \infty
\end{align}
for some $\eta_0>0$ and $\vartheta < \alpha$.
For the purpose of establishing a central limit theorem, we shall assume here that $\vartheta \in (0,\alpha/2)$ 
such that $\Lap\bmu$ is finite in some neighborhood of $\vartheta$ and
$\det(\Ip-\Lap\bmu(z))$ has no roots in the strip $[\vartheta, \alpha/2) + \imag \R$. 
For a vector-valued function $f$,
denote by $\mathrm{V}\!f(t)$ its total variation on $(-\infty,t]$, calculated entry-wise.
If the characteristic $\varphi$ vanishes on the negative half-line and satisfies
\begin{equation*}
	\int_0^\infty e^{-\theta x} \, \mathrm{V} \E[\varphi](x) \, \dd x < \infty
\end{equation*}
for some $\theta \in (\vartheta, \alpha/2)$, then for arbitrarily small $\eps>0$,
\begin{equation}
	m_t^\varphi
		= \sum_{\lambda \in \Lambda} e^{\lambda t} \sum_{k=0}^{k_\lambda-1} A_{\lambda,k+1}
		\int \E[\varphi](s)\frac{(t-s)^{k}}{k!} e^{-\lambda s} \, \dd s + O\big(e^{(\theta+\eps)t}\big)
\end{equation}
as $t\to\infty$, see \cite[Theorem 2.5]{Kolesko+al:2025}.
This formula may be rewritten using the exponential matrices and Kronecker product as
\begin{equation}\label{eq:meanAsymptotics}
	m_t^\varphi
	= \sum_{\lambda \in \Lambda} \int_\G (\be_1^\trans \exp(\lambda, t-s, k_\lambda) \otimes \Ip) \Av_\lambda \E[\varphi](s) \, \ell(\dd s) + r(t),
\end{equation}
where $r$ satisfies~\eqref{eq:rasympt}.

In the lattice case the mean expansion is expressed in \cite{Kolesko+al:2025}
not in terms of matrices connected to the Laplace transform, but to the generating function of $\bmu$. That is, let
\begin{equation*}
	\Gen \bmu(z) = \sum_{n=0}^\infty \bmu(\{n\}) z^n,	\quad	|z| \leq e^{-\vartheta}
\end{equation*}
where $\vartheta > 0$ is as in~\ref{assumpt:LaplaceDomain}.
Since $\Gen\bmu(e^{-z}) = \Lap\bmu(z)$, for each $\lambda \in \Lambda$, $e^{-\lambda}$ is a pole of $(\Ip - \Gen\bmu(z))^{-1}$ of multiplicity $k_\lambda$, cf.~\cite[Lemma 2.6]{Kolesko+al:2025}, which gives rise to the expansion
\begin{equation*}
	(\Ip - \Gen\bmu(z))^{-1} = (z-e^{-\lambda})^{-k_\lambda} B_{\lambda, k_\lambda} + \dots + (z-e^{-\lambda})^{-1} B_{\lambda,1} + R_\lambda(z)
\end{equation*}
around $e^{-\lambda}$ for some $p\times p$ matrices $(B_{\lambda,k})_{k\in[k_\lambda]}$
and a holomorphic remainder $R_\lambda$. In the proof of \cite[Theorem 2.7]{Kolesko+al:2025}
it was shown that if $\varphi$ is a characteristic vanishing on the negative half-line and satisfying
\begin{equation*}
	\sum_{n=0}^\infty \E[\varphi](n) e^{-\theta n} < \infty
\end{equation*}
for some $\vartheta < \theta < \alpha/2$ such that $\det(\Ip-\Lap\bmu(z))$ has no zeros in the strip $[\theta,\frac\alpha2) + \imag \R$,
then, for arbitrarily small $\eps>0$,
\begin{equation}	\label{eq:mean expansion lattice}
	\begin{split}
		m^\varphi_n &= \sum_{\lambda\in\Lambda} e^{\lambda n} \sum_{k=1}^{k_\lambda} B_{\lambda,k} \sum_{l=0}^{k-1}  (-e^\lambda)^{k-l} \binom{n+k-l-1}{k-l-1} \frac1{l!} (\Gen \E[\varphi])^{(l)}(e^{-\lambda}) + O\big(e^{(\theta+\eps)n}\big) \\
		&= \sum_{\lambda\in\Lambda} \sum_{k=1}^{k_\lambda} B_{\lambda,k} e^{\lambda k} \sum_{j=0}^\infty e^{\lambda(n-j)} \E[\varphi](j) \sum_{l=0}^{k-1} (-1)^{k-l}  \binom{j}{l} \binom{n+k-l-1}{k-l-1} + O\big(e^{(\theta + \eps)n}\big).
	\end{split}
\end{equation}
This expansion can likewise be written in the form~\eqref{eq:meanAsymptotics}.
Since we were only able to do so with considerable effort,
we have relegated the corresponding calculation to Appendix~\ref{sec:mean expansion lattice}.

Although the results of \cite{Kolesko+al:2025b} were stated only for characteristics vanishing on the negative half-line, one would expect them to hold also for general characteristics, possibly under additional mild assumptions, as is the case for the single-type process
\cite[Lemmas 7.1 and 7.6]{Iksanov+al:2024}. Therefore, we state Theorem~\ref{thm:main} under the assumption that~\eqref{eq:meanAs1} and~\eqref{eq:meanAsymptotics-vecs} hold for a characteristic $\varphi$ not necessarily vanishing on the negative half-line.

\begin{rem}\label{rem:meanAsympMTG}
Observe that using Lemma~\ref{lem:mtgsasCMJ} and~\eqref{eq:mtg2}, Equation~\eqref{eq:meanAsymptotics} may be rewritten as
\begin{equation*}
	\begin{split}
	m_t^\varphi
	&= \sum_{\lambda\in\Lambda} \int (\be_1^\trans \exp(\lambda, t-s, k_\lambda) \otimes \Ip) \Ev[W_t(\lambda)] \E[\varphi](s) \, \ell(\dd s) + r(t) \\
	&= \sum_{\lambda \in\Lambda} \int (\be_1^\trans \exp(\lambda, -s, k_\lambda) \otimes \Ip) \Ev[\cZ_t^{\phi_\lambda}] \E[\varphi](s) \, \ell(\dd s) + r(t)
	\end{split}
\end{equation*}
as $t \to \infty$, $t \in \G$.
\end{rem}

\section{Proofs of the main results}	\label{sec:core}

In this section we prove a series of auxiliary results that lead to the proof of Theorem~\ref{thm:main}.
We adapt the method pioneered in \cite{Iksanov+al:2024}, that is, we first prove convergence in distribution for
CMJ processes counted with centered characteristic (Theorem~\ref{thm:centeredChar}).
The main tool here is the martingale central limit theorem.
Then, in Lemma~\ref{lem:detRecentered}, we show how a process counted with deterministic characteristic may be decomposed into two parts,
one being its mean and the second being a CMJ process counted with centered characteristic.
These two results are used in Theorem~\ref{thm:smallMean},
which covers the convergence of CMJ processes with slowly growing mean.
Finally, the proof of Theorem~\ref{thm:main} is presented in Section~\ref{sec:main-proof}.

\subsection{Centered characteristics}

The following lemma is the multi-type version of Lemma 6.1 in \cite{Iksanov+al:2024}.

\begin{lem}\label{lem:polyCorr}
	Assume that~\ref{assumpt:Malthusian alpha} through~\ref{assumpt:hreprMoments} hold. 
	Let $\theta \geq 0$ and $f : [0,\infty) \to [0,\infty)^p$ be a continuous function
	such that $f(x) = O(x^\theta)$ as $x \to \infty$ and $x^{-\theta}f(x)$ is uniformly continuous on $[1,\infty)$. Suppose that the limit
	$$ \lim_{t\to\infty, t\in\G} \frac1{t^{\theta+1}} \int_{[0,t]} \bu^\trans f(s) \, \ell(\dd s) \eqqcolon c \in (0,\infty)	$$
	exists. Let $\varphi(t) = e^{\alpha t} f(t) \1_{[0,\infty)}(t)$, $t \in \R$. Then
	$$ \sup_{t \geq 1} \big( e^{-\alpha t} t^{-\theta -1} \E[\cZ_t^\varphi]\big) < \infty$$
	and
	$$ \frac{e^{-\alpha t}}{t^{\theta+1}} \cZ^\varphi_t \to \frac{c W}\beta \quad \P\text{-a.\,s.\ as } t\to\infty,\,t\in\G.$$
\end{lem}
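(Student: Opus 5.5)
The plan is to mirror the single-type argument of \cite[Lemma 6.1]{Iksanov+al:2024}, with the renewal theorem replaced by its Markov-modulated version from Lemma~\ref{lem:meanviarenewal}. The characteristic $\varphi$ is deterministic, so $\cZ_t^\varphi=\sum_{u\in\Tree}e^{\alpha(t-S(u))}\be_{\tau(u)}^\trans f(t-S(u))\1_{\{S(u)\le t\}}$, and all summands are non-negative.

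\emph{Mean bound.} Writing the mean through the many-to-one formula~\eqref{eq:manytoone} as in the proof of Lemma~\ref{lem:meanviarenewal} gives
\begin{equation*}
e^{-\alpha t}\E^i[\cZ_t^\varphi]=\bv_i\,\E^i\Big[\sum_{n\ge0}\bv_{M_n}^{-1}\be_{M_n}^\trans f(t-S_n)\1_{\{S_n\le t\}}\Big].
\end{equation*}
Since $f(x)\le C(1+x)^\theta$, this is bounded by $C(1+t)^\theta\,\E^i[\#\{n:S_n\le t\}]$. The Markov renewal theorem bounds the renewal function by $O(1+t)$, which yields the first claim. I also expect the finer asymptotics
\begin{equation*}
e^{-\alpha t}t^{-\theta-1}\E^i[\cZ_t^\varphi]\to \bv_i c/\beta.
\end{equation*}
To see this, split $f(t-s)$ over blocks where $s$ ranges in $[kh,(k+1)h)$. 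The renewal measure of a block is asymptotically $h/\beta$ times the stationary weights $\pi_j=\bu_j\bv_j$, so the factor $\bv_{M_n}^{-1}$ turns $\pi_j$ into $\bu_j$. Uniform continuity of $x^{-\theta}f(x)$ makes the Riemann-sum error negligible.

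\emph{A.s.\ convergence.} Fix $R>0$ and decompose $\cZ_t^\varphi$ along the coming generation $\cC_R$ via the branching property:
\begin{equation*}
\cZ_t^\varphi=\sum_{u\in\Tree,\,S(u)\le R}(\dots)+\sum_{v\in\cC_R}\cZ^{\varphi}_{v,\,t-S(v)},
\end{equation*}
where the $\cZ_{v,\cdot}^\varphi$ are independent copies started from type $\tau(v)$. The first sum is a.s.\ finite, and the polynomial factor $t^{\theta+1}$ kills it after scaling. For the second sum, the plan is as follows. First, show that $t^{-\theta-1}e^{-\alpha t}\cZ^\varphi_t$ is close in $L^1$-conditional mean to $\sum_{v\in\cC_R}e^{-\alpha S(v)}\bv_{\tau(v)}\,c/\beta=W_R\,c/\beta$; this follows from the mean asymptotics conditionally on $\F_{\cC_R}$, using $(t-S(v))^{\theta+1}/t^{\theta+1}\to1$ for fixed $S(v)$. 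Second, pass to the a.s.\ statement along a lattice $t\in\delta\N$ by a Borel--Cantelli/variance argument, as in \cite{Iksanov+al:2024}. The variance of a deterministic-characteristic process is not directly available, however, so it is cleaner to proceed as Nerman does. Use the law of large numbers of Proposition~\ref{thm:asconv} (in its a.s.\ form from \cite{Iksanov+Meiners:2015}) applied to the truncated characteristics $\varphi_{a}(t)=e^{\alpha t}\1_{[0,a)}(t)\,\be\,$-type indicators. These give $e^{-\alpha t}\#\{u:\tau(u)=j,\ t-S(u)\in[x,x+h)\}\to (W/\beta)\bu_j\int_x^{x+h}e^{-\alpha s}\dd s\cdot e^{\alpha x}$-type limits. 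Then write $e^{-\alpha t}\cZ^\varphi_t$ as a sum over age blocks $[kh,(k+1)h)$, $k\le t/h$, of $f$ evaluated at the block times the number of individuals of each type in that age window, weighted by $e^{\alpha\cdot\text{age}}$.

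\emph{Main obstacle.} The hard part is uniformity. The number of age blocks grows like $t$, so the pointwise a.s.\ LLN for each fixed window is not enough. I would handle it by the monotonicity trick: the weighted counts $\sum_u e^{-\alpha S(u)}\1_{\{S(u)\le y\}}\be_{\tau(u)}$ are non-decreasing in $y$, and Nerman's LLN gives $e^{-\alpha y}\sum_{S(u)\le y}\be_{\tau(u)}^\trans\bw\to \frac{W}{\alpha\beta}\bu^\trans\bw$ a.s. Therefore the cumulative measure $\nu(\dd s)=\sum_u e^{-\alpha S(u)}\bu$-weighted $\delta_{S(u)}$ satisfies $\nu([0,y])\sim (W/\beta)\,y$ uniformly on compacts after scaling by $y$, by a standard dense-$y$ sandwich. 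Then
\begin{equation*}
e^{-\alpha t}\cZ_t^\varphi=\int_{[0,t]}f(t-s)\cdot\nu(\dd s),
\end{equation*}
with the type-dependent version requiring the per-type LLN. Since $f(t-s)\approx t^\theta g((t-s)/t)$ with $g(x)=x^\theta$ times a slowly varying correction controlled by uniform continuity, Riemann--Stieltjes summation by parts gives $t^{-\theta-1}e^{-\alpha t}\cZ^\varphi_t\to (W/\beta)\lim t^{-\theta-1}\int_0^t\bu^\trans f=cW/\beta$. The delicate point is controlling the per-type ratios uniformly in $s\le t$; for the lattice case the same argument runs with sums.
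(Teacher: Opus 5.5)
Your mean bound is fine, and the ingredients you name for the a.s.\ part (blocks, the a.s.\ law of large numbers, uniform continuity of $x^{-\theta}f(x)$) are the right ones. The gap is in how you resolve what you call the main obstacle.

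\textbf{The uniformity issue is an artefact of your indexing.} It arises because you index the blocks by \emph{age}. Index them by \emph{birth time} instead. Apply the a.s.\ law of large numbers to the single fixed characteristic $\be_j\1_{[0,h)}$ (in the lattice case $\be_j\1_{\{0\}}$). It gives $e^{-\alpha y}N_j((y-h,y])\to\frac{W}{\beta}\bu_j\frac{1-e^{-\alpha h}}{\alpha}$ as $y\to\infty$, where $N_j((y-h,y])$ is the number of type-$j$ individuals born in $(y-h,y]$.
\begin{itemize}
\item Because this is convergence of one function of $y$, for every $\eps>0$ there is a random $r$ such that \emph{all} windows with $y\ge r$ are simultaneously within a factor $1\pm\eps$ of the limit, however many there are.
\item The individuals born before $r$ are a.s.\ finitely many. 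They contribute $O(t^\theta)$ to $e^{-\alpha t}\cZ_t^\varphi$, which the factor $t^{\theta+1}$ kills.
\item On the remaining blocks, uniform continuity lets you replace $f(t-S(u))$ by $f(t-kh)$ at a cost of $o_h(1)(1+t)^\theta$ per unit of weighted mass.
\end{itemize}
This sandwiches $t^{-\theta-1}e^{-\alpha t}\cZ_t^\varphi$ between $(1\pm\eps)(1+O(h))\frac{W}{\beta}t^{-\theta-1}\int_{[0,t]}\bu^\trans f\,\dd\ell$, up to $O(o_h(1))$. That is exactly the paper's argument, which uses $N_j(\{k\})$ and Proposition~\ref{thm:asconv} in the lattice case.

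\textbf{Your replacement step fails under the stated hypotheses.} You instead extract only cumulative asymptotics $\nu_j([0,y])=\frac{W}{\beta}\bu_j\,y+o(y)$ and conclude by Riemann--Stieltjes summation by parts. That needs control of the variation of $s\mapsto f(t-s)$, or convergence or equicontinuity of $t^{-\theta}f(t\,\cdot)$, and the lemma assumes neither.

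Your description of $f(t-s)$ as $t^\theta g((t-s)/t)$ times a slowly varying correction is an extra assumption. For instance, $f(x)=x^\theta(2+\cos(\eta x))$ (times a fixed positive vector) satisfies every hypothesis. Oscillating functions of this kind arise when the lemma is used in Theorem~\ref{thm:centeredChar}(ii) with roots $\lambda\in\dLambda$ having $\Im(\lambda)\neq0$.

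For such $f$, cumulative asymptotics do not determine $\int_{[0,t]}f(t-s)\,\nu(\dd s)$ to order $o(t^{\theta+1})$. Take $\theta=0$ and the deterministic measure $\nu(\dd s)=L(1+\cos(\eta s))\,\dd s$. Then $\nu([0,y])=Ly+O(1)$, yet $\int_0^t(2+\cos(\eta(t-s)))\,\nu(\dd s)=Lt\,(2+\tfrac12\cos(\eta t))+O(1)$ rather than $2Lt+O(1)$. So the argument genuinely needs a.s.\ control of birth counts in windows of fixed length $h$, and the birth-time indexing above supplies it directly.
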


\begin{proof}
The proof of \cite[Lemma 6.1]{Iksanov+al:2024} extends to the multi-type setting considered here with only minor modifications.
The most relevant changes concern the application of limit theorems for CMJ processes.
For instance, in \cite[Lemma 6.1]{Iksanov+al:2024}, in the lattice case, \cite[Corollary 3.1(a)]{Meiners:2010} is used
to conclude that, for given $\varepsilon > 0$, with probability one, there is a (possibly random) $r \in \N$
such that, for all $k \in \N$, $k \geq r$,
\begin{equation*}
(1-\varepsilon) \frac{W}{\beta} \leq e^{-\alpha k} N(\{k\}) (1-\varepsilon) \frac{W}{\beta}
\end{equation*}
where $N(\{k\})$ is the number of individuals born at time $k$ in the single-type process.
In the present multi-type setting, writing $N_j(\{k\})$ for the number of type-$j$ individuals born at time $k$,
we may use Proposition~\ref{thm:asconv}
to conclude that, with $\P$-probability one, there is a (possibly random) $r \in \N$
such that, for all $k \in \N$, $k \geq r$, we have
\begin{equation*}
(1-\varepsilon) \frac{W}{\beta} \bu_j \leq e^{-\alpha k} N_j(\{k\}) \leq (1+\varepsilon) \frac{W}{\beta} \bu_j.
\end{equation*}
The following computation in the proof of \cite[Lemma 6.1]{Iksanov+al:2024} then naturally extends as follows.
For $t \in \N$, $t \geq r$, one infers
\begin{multline*}
\frac{e^{-\alpha t}}{t^{\theta + 1}} \sum_{\substack{u \in \Tree, \\ r \leq S(u) \leq t}} e^{\alpha(t-S(u))} \be_{\tau(u)}^\trans f(t-S(u))
= \frac{1}{t^{\theta + 1}} \sum_{j=1}^p \sum_{k=r}^t e^{-\alpha k} \be_{j}^\trans f(t-S(u)) N_j(\{k\})	\\
\leq \frac{1+\varepsilon}{t^{\theta + 1}} \frac{W}{\beta} \sum_{j=1}^p \sum_{k=r}^t \bu_j \be_{j}^\trans f(t-k)
= \frac{1+\varepsilon}{t^{\theta + 1}} \frac{W}{\beta} \sum_{k=0}^{t-r}  \bu^\trans f(t-k) \to (1+\varepsilon) \frac{c W}{\beta}
\end{multline*}
$\P$-a.\,s.\ as $t \to \infty$. The non-lattice case can be dealt with in an analogous manner.
We refrain from providing further details.
\end{proof}

\begin{thm}\label{thm:centeredChar}
Assume~\ref{assumpt:Malthusian alpha},~\ref{assumpt:hreprMoments} and let $\cN$ be a standard normal variable independent of
$\F = \sigma(\tau(\varnothing),\, \pi_u : u\in\UHTree)$.
	\begin{enumerate}[\normalfont(i), wide]
		\item Assume that the characteristic $\chi$ is centered and satisfies~\ref{assumpt:dRivar}. Then
		\begin{equation*}
			e^{-\frac{\alpha}{2} t} \cZ_t^\chi
			\tod \bigg( \frac{W}{\beta} \int_\G e^{-\alpha s} \bu^\trans \E[\chi^2](s) \, \ell(\dd s)\bigg)^{\!1/2} \cN \quad \text{as } t\to\infty,\,t\in\G.
		\end{equation*}
		\item Assume that the characteristic $\chi$ is centered
		and that there exist $\theta \geq 0$ and a function $f$ (not vanishing identically on $\G$)
		satisfying the conditions of Lemma~\ref{lem:polyCorr} such that $\E[\chi^2(t)] = e^{\alpha t} f(t) \1_{[0,\infty)(t)}$
		and
		\begin{equation}\label{eq:centCharii}
			\E\big[ \be_i^\trans \chi^2(t) \1_{\{\be_i^\trans \chi^2(t) > \eps e^{\alpha t} t^{\theta+1}\}} \big]
			= o\big(t^\theta e^{\alpha t}\big) \quad \text{as } t \to \infty, \text{ for every } \eps>0, \, i\in[p].
		\end{equation}
		Then
		\begin{equation*}
		\bigg(\int_{[0,t]} e^{-\alpha s} \bu^\trans \E[\chi^2](s) \, \ell(\dd s)\bigg)^{-1/2} e^{-\frac\alpha2t} \cZ_t^\chi
		\tod \Big(\frac{W}\beta\Big)^{\!1/2} \cN \quad \text{as } t\to\infty,\;t\in\G.
		\end{equation*}
	\end{enumerate}
\end{thm}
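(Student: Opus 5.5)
We follow the strategy of \cite[Theorem 6.3]{Iksanov+al:2024} and reduce the statement to a martingale central limit theorem along an admissible ordering. Fix $t \in \G$ and write $\cZ_t^\chi = \sum_{u \in \Tree} \be_{\tau(u)}^\trans \chi_u(t-S(u))$. Order individuals by birth time, breaking ties by an admissible ordering; the order must depend on birth times only. Let $\mathcal{G}_u$ be the $\sigma$-algebra generated by $\tau(\varnothing)$ and the life histories of all individuals preceding $u$ in this order. Then $S(u)$ and $\tau(u)$ are measurable with respect to the information strictly before $u$, while $\chi_u$ is independent of it and centered. Hence the summands $X_{t,u} \coloneqq e^{-\frac\alpha2 t}\be_{\tau(u)}^\trans \chi_u(t-S(u))$ form a martingale difference array. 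By Lemma~\ref{lem:mtg}, the series converges in $L^2$, so we may truncate it to finitely many terms and then pass to the limit.

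We would apply the martingale CLT with a random limit variance (Hall--Heyde, stable version). This requires two inputs.

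\textbf{(a) Conditional variance.} The conditional variance is
\[
V_t \coloneqq \sum_u \E[X_{t,u}^2 \mid \mathcal{G}_u] = e^{-\alpha t}\sum_{u\in\Tree} \be_{\tau(u)}^\trans \E[\chi^2](t-S(u)) = e^{-\alpha t}\cZ_t^{\E[\chi^2]}.
\]
This is a CMJ process counted with a deterministic nonnegative characteristic. In case (i), assumption~\ref{assumpt:dRivar} says exactly that $e^{-\alpha t}\E[\chi^2](t)$ is directly Riemann integrable. Proposition~\ref{thm:asconv} then gives
\[
V_t \to \frac{W}{\beta}\int_\G e^{-\alpha s}\bu^\trans\E[\chi^2](s)\,\ell(\dd s)
\]
in probability. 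In case (ii), Lemma~\ref{lem:polyCorr} applied to $\E[\chi^2] = e^{\alpha t} f \1_{[0,\infty)}$ gives $t^{-\theta-1}V_t \to cW/\beta$ almost surely. The normalising integral in (ii) behaves like $c\,t^{\theta+1}$ by the hypothesis of Lemma~\ref{lem:polyCorr}, so after normalisation the conditional variance tends to $W/\beta$.

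\textbf{(b) Conditional Lindeberg condition.} For $\eps>0$ we must show
\[
\sum_u \E\big[X_{t,u}^2\1_{\{|X_{t,u}|>\eps\}} \mid \mathcal{G}_u\big] \to 0
\]
in probability, with the appropriate extra $t^{-(\theta+1)}$ normalisation in case (ii). The left-hand side equals $e^{-\alpha t}\cZ_t^{\psi_t}$ for the deterministic characteristic $\psi_t(s) = \E\big[\chi^2(s)\1_{\{\chi^2(s) > \eps^2 e^{\alpha t}\}}\big]$, taken entrywise. It suffices to show its expectation vanishes. By Lemma~\ref{lem:meanviarenewal}, the expectation is a Markov-renewal sum of $e^{-\alpha(t-s)}\psi_t(t-s)$.

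In case (i) we split at $s \le t - A$ versus $s > t-A$. For $s > t-A$, the truncation level $e^{\alpha t}$ exceeds $e^{\alpha s}$ by a huge factor. Using the local uniform integrability~\ref{assumpt:local ui} on compacts together with the direct Riemann integrability of $e^{-\alpha x}\E[\chi^2](x)$, dominated convergence in the renewal theorem makes this part small. The part $s \le t-A$ is controlled by the tail of the directly Riemann integrable majorant and is small uniformly once $A$ is large. Case (ii) is handled similarly, using \eqref{eq:centCharii} and the bound $\sup_t e^{-\alpha t}t^{-\theta-1}\E[\cZ_t^{\varphi}] < \infty$ from Lemma~\ref{lem:polyCorr}.

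Combining (a) and (b) gives the mixed-normal limit. Stability of the convergence yields independence of $\cN$ from $\F$; conditioning on $\tau(\varnothing)$ handles any convex combination $\P$.

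We expect the Lindeberg step (b) to be the main obstacle. The truncation level and the characteristic are both indexed by $t$, so the renewal theorem cannot be applied to a fixed function. Our first attempt would be to bound $\psi_t$ by a fixed directly Riemann integrable function and show pointwise convergence of $e^{-\alpha(t-s)}\psi_t(t-s)\to0$ for each fixed lag $s$. We would then use the uniform boundedness from Lemma~\ref{lem:meanviarenewal} to justify dominated convergence, with the many-to-one formula \eqref{eq:manytoone} making this explicit.

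A secondary technical point is justifying the martingale-difference structure for the infinite (non-lattice) sum. We would handle it by first truncating to individuals with $S(u)\le t+K$ and generation at most $n$, which introduces an $L^2$-small error by Lemma~\ref{lem:mtg}.
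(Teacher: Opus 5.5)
Your architecture (a martingale CLT along an ordering of the tree, with conditional variance $e^{-\alpha t}\cZ_t^{\E[\chi^2]}$ handled by Proposition~\ref{thm:asconv} in case (i) and Lemma~\ref{lem:polyCorr} in case (ii)) is the paper's. However, the conditional Lindeberg step in case (i) has a gap.

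Part (i) assumes only that $\chi$ is centered and satisfies~\ref{assumpt:dRivar}. Assumption~\ref{assumpt:local ui} is not available, so you cannot get $\sup_{|x|\le A}\E[\chi^2(x)\1_{\{\chi^2(x)>\eps^2e^{\alpha t}\}}]\to0$ on the bounded-age part. In fact, under~\ref{assumpt:dRivar} alone this supremum need not tend to $0$: take $\chi(x)=\pm n\1_{[1/(n+1),1/n)}(x)$ on an event of probability proportional to $n^{-2}$.

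Your fallback does not close the gap either. A fixed directly Riemann integrable majorant, pointwise convergence $\psi_t(x)\to0$, and the uniform bound of Lemma~\ref{lem:meanviarenewal} do not give dominated convergence. After the many-to-one formula, the integrating measure is the Markov renewal measure evaluated at $t-\dd x$, which changes with $t$. Pointwise convergence of the integrand does not control integrals against a moving measure.

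The missing observation is monotonicity in the truncation level. Entrywise, $\psi_t=v_2(\eps e^{\alpha t/2},\cdot)$, where $v_2(r,x)$ has $i$th entry $\E[\be_i^\trans\chi^2(x)\1_{\{|\be_i^\trans\chi(x)|>r\}}]$, and $v_2(r,\cdot)$ is nonincreasing in $r$. Hence for every fixed $r$ and all $t$ with $\eps e^{\alpha t/2}\ge r$ one has $\psi_t\le v_2(r,\cdot)$, a characteristic that does not depend on $t$. Proposition~\ref{thm:asconv} (or Lemma~\ref{lem:meanviarenewal}, if you prefer to bound expectations) then gives
\[
\limsup_{t\to\infty} e^{-\alpha t}\cZ_t^{\psi_t}\le\frac{W}{\beta}\int_\G e^{-\alpha s}\bu^\trans v_2(r,s)\,\ell(\dd s).
\]
Letting $r\to\infty$, dominated convergence with majorant $e^{-\alpha s}\bu^\trans\E[\chi^2](s)$ makes the right-hand side vanish. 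This is exactly how the paper proceeds, and it needs no uniformity in the age variable.

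The same monotonicity is what you implicitly need in case (ii). For $0\le x=t-S(u)\le t$, the Lindeberg level $\eps^2e^{\alpha t}t^{\theta+1}$ dominates $\eps^2e^{\alpha x}x^{\theta+1}$, and this is what lets you apply~\eqref{eq:centCharii} at the age $x$. After that, your bound via the supremum in Lemma~\ref{lem:polyCorr} plus Lemma~\ref{lem:meanviarenewal} for $\1_{[0,T)}$ works.

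Finally, ordering by birth time is an unnecessary detour. A fixed admissible ordering $(u_k)$ of $\UHTree$ with $\cG_k=\sigma(\pi_{u_j}:j\le k)$ directly gives the martingale-difference property and the nested filtrations required by the stable martingale CLT with random limit variance, with no stopping-line argument. Choosing a deterministic truncation index $k_n$ with $L^2$-error at most $2^{-n}$ replaces your random truncation.
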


\begin{proof}
	Observe that in case (i), if $\E[\chi^2](t) = 0$ for a.\,e.\ $t\in\G$, then the statement is trivial.
	We may thus assume that in both cases, $\E[\chi^2]$ does not vanish identically on $\G$.
	Let
	\begin{equation*}
		c_t = \begin{cases}
			e^{-\frac\alpha2t} \Big( \frac{1}{\beta} \int_\G e^{-\alpha s} \bu^\trans \E[\chi^2](s) \, \ell(\dd s)\Big)^{\!-1/2} \quad& \text{in case (i),} \\
			e^{-\frac\alpha2t} \Big( \frac{1}{\beta} \int_{[0,t]} e^{-\alpha s} \bu^\trans \E[\chi^2](s) \, \ell(\dd s)\Big)^{\!-1/2} &\text{in case (ii).}
			\end{cases}
	\end{equation*}
	Then $c_t$ is well-defined (in case (ii) for sufficiently large $t$)
	since all entries of the vector $\bu$ are positive. Fix an admissible ordering $(u_1, u_2, \dots)$ of $\UHTree$ and consider
	$$M_n(t) = c_t \sum_{k=1}^n \be_{\tau(u_k)}^\trans \chi_{u_k}(t - S(u_k)).$$
	By Lemma~\ref{lem:mtg}, for every $t\in\G$, $(M_n(t))_{n\in\N_0}$ is a martingale convergent a.\,s.\ and in $L^2$
	to $M(t) \coloneqq c_t \cZ_t^\chi$.
	Fix an increasing sequence $t_n \to \infty$ and let $(k_n)_{n\in\N}$ be increasing such that
	$$ \E^i[(M(t_n) - M_{k_n}(t_n))^2] < 2^{-n} \quad \text{for all } i\in[p]. $$
	In particular, $M_{k_n}(t_n) - M(t_n) \to 0$ in probability.
	By Slutsky's theorem, in order to prove the claimed convergence, it is enough to show that
	\begin{equation}\label{eq:Mknconv}
		M_{k_n}(t_n) \tod W^{1/2} \cN
	\end{equation}
	under $\P$. By the martingale convergence theorem, see e.g.\ \cite[Corollary 3.1 on page 58]{Hall+Heyde:1980},
	\eqref{eq:Mknconv} follows from
	\begin{align}
		c_{t_n}^2 &\sum_{k=1}^{k_n} \E\big[ \be_{\tau(u_k)}^\trans \chi_{u_k}^2(t_n\!-\!S(u_k)) \,\big|\, \cG_{k-1} \big] \topr W, \label{eq:convmtg1} \\
		c_{t_n}^2 &\sum_{k=1}^{k_n} \E \big[ \be_{\tau(u_k)}^\trans \chi_{u_k}^2(t_n\!-\!S(u_k)) \1_{\{c_{t_n}\be_{\tau(u_k)}^\trans|\chi_{u_k}(t_n - S(u_k))| > \eps\}}  \,\big|\, \cG_{k-1} \big] \topr 0 \quad \textnormal{for all } \eps>0 \label{eq:convmtg2}
	\end{align}
	where $\cG_k = \sigma(\pi_{u_j}: j=1,\ldots,k)$, $k \in \N_0$.
	Observe that, since the increments of $(M_k(t_n))_{k\in\N_0}$ are uncorrelated,
	\begin{equation*}
		c_{t_n}^2 \E\bigg[ \sum_{k=k_n+1}^\infty \E\big[ \be_{\tau(u_k)}^\trans \chi_{u_k}^2(t_n - S(u_k)) \,\big|\, \cG_{k-1} \big] \bigg]
		= \E[(M(t_n) - M_{k_n}(t_n))^2] \leq 2^{-n}.
	\end{equation*}
	Therefore,~\eqref{eq:convmtg1} is equivalent to
	\begin{equation}\label{eq:convmtg1a}
		c_{t_n}^2 \sum_{k=1}^\infty \E\big[ \be_{\tau(u_k)}^\trans \chi_{u_k}^2(t_n - S(u_k)) \,\big|\, \cG_{k-1} \big] \topr W.
	\end{equation}
	The left-hand side may be rewritten as
	\begin{equation*}
	c_{t_n}^2 \sum_{k=1}^\infty \be_{\tau(u_k)}^\trans \E[\chi^2] (t - S(u_k))
	=  c_{t_n}^2\cZ_{t_n}^{\E[\chi^2]}	\quad	\P\text{-a.\,s.}
	\end{equation*}
	Thus,~\eqref{eq:convmtg1a} follows from Proposition~\ref{thm:asconv} in case (i) and from Lemma~\ref{lem:polyCorr} in case (ii).
	To show~\eqref{eq:convmtg2}, let, for $r > 0$ and $t \in \G$, $v_2(r,t) \in \R^p$ be given by
	$\be_i^\trans v_2(r,t) = \E[\be_i^\trans \chi^2(t) \1_{\{\be_i^\trans |\chi(t)| > r\}}]$ for $i \in[p]$. Then, in case (i),
	\begin{equation*}
		\begin{split}
			\limsup_{n\to\infty} c_{t_n}^2
			&\sum_{k=1}^{k_n} \E \big[ \be_{\tau(u_k)}^\trans \chi_{u_k}^2(t_n - S(u_k))
			\1_{\{c_{t_n}\be_{\tau(u_k)}^\trans|\chi_{u_k}(t_n - S(u_j))| > \eps\}}  \,\big|\, \cG_{k-1} \big] \\
			&= \limsup_{n\to\infty} c_{t_n}^2 \sum_{k=1}^{k_n} \be_{\tau(u_k)}^\trans v_2(c_{t_n}^{-1} \eps,t_n - S(u_k)) \\
			&\leq \bigg(\frac{1}{\beta} \int_\G e^{-\alpha s} \bu^\trans \E[\chi^2](s) \, \ell(\dd s)\bigg)^{-1}
			\liminf_{r\to\infty} \limsup_{n\to\infty} e^{-\alpha t_n} \cZ_{t_n}^{v_2(r,\cdot)}.
		\end{split}	
	\end{equation*}
	Using first Proposition~\ref{thm:asconv} and then the dominated convergence theorem, we get
	\begin{equation*}
		\liminf_{r \to \infty} \limsup_{n\to\infty} e^{-\alpha t_n} \cZ_{t_n}^{v_2(r,\cdot)}
		= \liminf_{r \to \infty} \frac{W}{\beta} \int_\G e^{-\alpha s} \sum_{i=1}^p \bu_i
		\E\big[\be_i^\trans \chi^2(s)\1_{\{\be_i^\trans |\chi(s)| > r\}}\big] \, \ell(\dd s) = 0
	\end{equation*}
	in probability, which proves~\eqref{eq:convmtg2} in case (i). To prove the statement in case (ii), fix $\eps,\delta>0$. By~\eqref{eq:centCharii}, there exists $T>0$ such that
	\begin{equation*}
		v_2(\eps e^{\alpha t/2} t^{(\theta+1)/2},t) = \E\big[ \be_i^\trans \chi^2(t) \1_{\{\be_i^\trans \chi(t)^2 > \eps^2 e^{\alpha t} t^{\theta+1}\}}\big] \leq \delta t^\theta e^{\alpha t}  \quad \text{for all } i\in[p], \, t \geq T.
	\end{equation*}
	Therefore,
	\begin{equation*}
		\begin{split}
			\limsup_{n\to\infty} & \frac{e^{-\alpha t_n}}{t_n^{\theta+1}}
			\sum_{k=1}^{k_n} \E \big[ \be_{\tau(u_k)}^\trans \chi_{u_k}^2(t_n - S(u_k)) \1_{\{\be_{\tau(u_k)}^\trans|\chi_{u_k}(t_n - S(u_k))| > \eps e^{\alpha t/2} t^{(\theta+1)/2}\}}  \,\big|\, \cG_{k-1} \big] \\
			&\leq \limsup_{n\to\infty} \frac{e^{-\alpha t_n}}{t_n^{\theta+1}} \sum_{u\in\Tree} \be_{\tau(u)}^\trans v_2(\eps e^{\alpha t/2} t^{(\theta+1)/2},t_n-S(u)) \\
			&\leq \delta \limsup_{n\to\infty} \frac{e^{-\alpha t_n}}{t_n^{\theta+1}}
			\sum_{u\in\Tree} (t_n - S(u))^\theta e^{\alpha(t_n - S(u))} \1_{\{t_n - S(u) \geq T\}} \\
			&+C_T \limsup_{n\to\infty} \frac{e^{-\alpha t_n}}{t_n^{\theta+1}} \sum_{u\in\Tree}  \1_{\{0 \leq t_n - S(u) < T\}},
		\end{split}
	\end{equation*}
	where $C_T = \max_{i\in[p]} \sup_{t \in [0,T]} \be_i^\trans \E[\chi^2](t)$. Observe that
	\begin{multline*}
		\frac{e^{-\alpha t_n}}{t_n^{\theta+1}} \sum_{u\in\Tree} (t_n - S(u))^\theta e^{\alpha(t_n - S(u))} \1_{\{t_n - S(u) \geq T\}}
		 \\ \leq \frac{e^{-\alpha t_n}}{t_n^{\theta+1}} \sum_{u\in\Tree} (t_n - S(u))^\theta e^{\alpha(t_n - S(u))} \1_{\{t_n - S(u) \geq 0\}}
		 \to \frac{\|\bu\|_1 W}{(\theta+1)\beta}
	\end{multline*}
	almost surely by Lemma~\ref{lem:polyCorr} with $f(t) = t^\theta \sum_{i=1}^p \be_i$, while
	\begin{equation*}
		\frac{e^{-\alpha t_n}}{t_n^{\theta+1}} \sum_{u\in\Tree} \1_{\{0 \leq t_n - S(u) < T\}} \to 0
	\end{equation*}
	in probability by Proposition~\ref{thm:asconv} applied to the characteristic $t \mapsto \1_{[0,T)}(t) \sum_{i=1}^p \be_i$. Since $\delta > 0$ is arbitrary, this implies
	\begin{equation*}
		\lim_{n\to\infty} \frac{e^{-\alpha t_n}}{t_n^{\theta+1}} \sum_{k=1}^{k_n} \E \big[ \be_{\tau(u_k)}^\trans \chi_{u_k}^2(t_n - S(u_k))
		\1_{\{\be_{\tau(u_k)}^\trans|\chi_{u_k}(t_n - S(u_k))| > \eps e^{\alpha t/2} t^{(\theta+1)/2}\}}  \,\big|\, \cG_{k-1} \big] = 0	\quad \text{in }	\P.
	\end{equation*}
	This completes the proof of~\eqref{eq:convmtg2} in case (ii) since $c_t^2 \sim (\beta/c) e^{-\alpha t} t^{-\theta-1}$ as $t\to\infty$.
\end{proof}

\subsection{Deterministic characteristic}

Let
\begin{equation}
	\bV = \sum_{n=0}^\infty \bmu^{*n}.
\end{equation}
Then $\bV$ satisfies the renewal equation
\begin{equation}\label{eq:Vrenewal}
	\bV = \Ip \delta_0 + \bmu * \bV.
\end{equation}
For a deterministic function $f : \R \to \R^p$ consider the characteristic $\chi_f$ given by
\begin{equation}
	\chi_f = \bxi * \bV * f - \bmu* \bV * f.
\end{equation}

\begin{lem}\label{lem:detRecentered}
	Assume~\ref{assumpt:Malthusian alpha}. Let $f : \R \to \R^p$ be a deterministic c\`adl\`ag characteristic satisfying~\ref{assumpt:dRimean}. Then
	\begin{enumerate}[\normalfont(i)]
		\item $\chi_f$ is a well-defined, c\`adl\`ag, centered characteristic.
		\item \label{item:chif1} If~\ref{assumpt:hreprMoments} holds and the function $t \mapsto m_t^f e^{-\frac\alpha2 t} (1 + t^2)$ is bounded,
		then $\chi_f$ satisfies~\ref{assumpt:dRimean} through~\ref{assumpt:local ui}.
		\item If the assumptions of~\ref{item:chif1} hold or $f$ is supported on $[0,\infty)$, then	
		\begin{equation}\label{eq:chif}
			\cZ_t^{\chi_f} = \cZ_t^f - \be_{\tau(\varnothing)}^\trans m_t^f.
		\end{equation}
	\end{enumerate}
\end{lem}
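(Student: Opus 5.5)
Three parts, taken in order. Throughout, the key object is $g \coloneqq \bV * f$. By \eqref{eq:manytoone} and the definition of $\bV$, we have $\be_i^\trans g(t) = \E^i[\cZ_t^f] = \be_i^\trans m_t^f$. This is well defined: split $f=f^+-f^-$ and apply Lemma~\ref{lem:meanviarenewal} to each part, which gives the bound $|m_t^f|\le Ce^{\alpha t}$ componentwise. Then $\chi_f = \bxi * m^f - \bmu * m^f$.

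\textbf{Part (i).} The integral $\int m^f_j(t-s)\,\xi^{ij}(\dd s)$ is finite almost surely and integrable. Indeed, $\int e^{\alpha(t-s)}\,\mu^{ij}(\dd s) = e^{\alpha t}\Lap\mu^{ij}(\alpha)<\infty$ by \ref{assumpt:Malthusian alpha}, so Fubini gives $\E[\bxi * m^f](t) = \bmu * m^f(t)$, and hence $\chi_f$ is centered. For c\`adl\`ag paths: $m^f$ is c\`adl\`ag by dominated convergence, using the c\`adl\`ag property of $f$ and the uniform bound $Ce^{\alpha t}$ on compacts. The random convolution $\bxi * m^f$ is then c\`adl\`ag by dominated convergence with the random integrable majorant $e^{\alpha(t+1)}\int e^{-\alpha s}\bxi(\dd s)$.

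\textbf{Part (ii).} The aim is to verify the criterion of Proposition~\ref{prop:char}\ref{item:dRivar conds}. Assume $|m^f_t|\le C e^{\frac\alpha2 t}/(1+t^2)$. I also need control for $t<0$. There I use $|m^f_t|\le Ce^{\alpha t}$, or more generally the bound $C e^{\frac\alpha2 t}(1+t^2)^{-1}$ if the hypothesis is read on all of $\R$. I would split $\chi_f$ into the random part $\bxi*m^f$ and the deterministic part $\bmu*m^f$, and bound $\E[\sup_{|x-t|\le1}|\bxi * m^f(t)|^2]$ by
\begin{equation*}
C\,\E\Big[\Big(\int e^{\frac\alpha2(x-s)}(1+|x-s|)^{-2}\,\xi^{ij}(\dd s)\Big)^2\Big]
\end{equation*}
(modulo the shift by $1$). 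Multiplying by $e^{-\alpha x}$ and integrating in $x$, then expanding the square as a double integral over $(s,r)$, exactly as in the proof of Lemma~\ref{lem:chilambda}, produces the kernel
\begin{equation*}
\int (1+|x-s|)^{-2}(1+|x-r|)^{-2}\,\dd x\le C.
\end{equation*}
The whole expression is therefore bounded by $C\,\E[(\int e^{-\frac\alpha2 s}\xi^{ij}(\dd s))^2]<\infty$ by \ref{assumpt:hreprMoments}. The deterministic part is handled the same way via Jensen. Proposition~\ref{prop:char}\ref{item:dRivar conds} then yields \ref{assumpt:dRivar} and \ref{assumpt:local ui}, and \ref{assumpt:dRimean} is trivial since $\E[\chi_f]=0$. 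The main obstacle is handling the supremum over $|x-t|\le1$ together with the fact that $m^f$ is not monotone. I would dominate $|m^f|$ by the smooth envelope $h(t)=Ce^{\frac\alpha2 t}(1+t^2)^{-1}$ on $t\ge0$ and $Ce^{\alpha t}$ on $t<0$. This envelope is comparable under shifts by at most $1$, so the supremum costs only a constant.

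\textbf{Part (iii).} Under either set of hypotheses, Lemma~\ref{lem:mtg} (case (ii)) or a direct finite-sum argument (case $f$ supported on $[0,\infty)$) shows that $\cZ_t^{\chi_f}$ exists as an unconditional $L^1$ limit along an admissible ordering, and Lemma~\ref{lem:well-def} gives the same for $\cZ_t^f$. I would write
\begin{equation*}
\cZ_t^{\chi_f}=\sum_{u}\be_{\tau(u)}^\trans\big(\bxi_u*m^f - \bmu*m^f\big)(t-S(u)).
\end{equation*}
The first sum equals $\sum_{u\neq\varnothing}\be_{\tau(u)}^\trans m^f(t-S(u))$. For the second, the renewal equation \eqref{eq:Vrenewal} gives $\bmu*m^f=m^f-f$. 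The sum thus telescopes to
\begin{equation*}
\sum_{u}\be_{\tau(u)}^\trans f(t-S(u)) - \be_{\tau(\varnothing)}^\trans m^f_t,
\end{equation*}
which is \eqref{eq:chif}. To justify the rearrangement, truncate to the subtree $\UHTree_n$: the truncated identity holds exactly, with a boundary term $\sum_{u\in\partial\UHTree_n}\be_{\tau(u)}^\trans m^f(t-S(u))$. This boundary term tends to $0$ in $L^1$ by the same convergence that underlies Lemmas~\ref{lem:mtg} and~\ref{lem:well-def}. In the half-line case the boundary eventually involves only individuals born after $t$, so it vanishes almost surely.
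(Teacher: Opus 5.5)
Your proposal is correct and follows essentially the paper's proof. In (i) you center via Fubini. In (ii) you use the same shift-stable envelope $Ce^{\frac\alpha2 t}(1+t^2)^{-1}$ and the same $x$-integrated kernel, which reduces everything to \ref{assumpt:hreprMoments}. In (iii) you use the same telescoping via $\bmu*m^f=m^f-f$ with a vanishing boundary term. The only cosmetic difference is that you truncate along the finite subtrees $\UHTree_n$ of an admissible ordering rather than by generations.

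Two justifications should be tightened. First, the c\`adl\`ag property of $m^f=\bV*f$ needs the majorant $s\mapsto f^*(t-s)$, which is $\bV$-integrable by Proposition~\ref{prop:char}(i) and Lemma~\ref{lem:meanviarenewal}. A majorant of order $e^{\alpha(t-s)}$ does not work, because $\int e^{-\alpha s}\,\bV(\dd s)=\sum_{n\geq0}\Lap\bmu(\alpha)^n$ diverges. Second, the boundary term does not vanish in $L^1$ because of the martingale argument behind Lemma~\ref{lem:mtg}. It vanishes because of the branching-property estimate $\E\big|\sum_{v\in\partial\UHTree_n}\be_{\tau(v)}^\trans m^f(t-S(v))\big|\leq\E\sum_{u\in\Tree\setminus\UHTree_n}\be_{\tau(u)}^\trans|f|(t-S(u))\to0$, which is the analogue of \eqref{eq:V*fest}.
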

\begin{proof}
\begin{enumerate}[\normalfont(i), wide]
	\item Observe that~\ref{assumpt:dRimean} implies that $e^{-\alpha t}|f|(t)$ is directly Riemann integrable.
	In particular, Lemma~\ref{lem:meanviarenewal} implies that, for any $t\in\G$, $\cZ_t^{|f|}$ is integrable.
	We may thus use Fubini's theorem to obtain 
	\begin{equation}\label{eq:V*f}
		m_t^f = \Ev \big[\cZ_t^f\big] = \Ev \bigg[ \sum_{n=0}^\infty \sum_{|u| = n} \be_{\tau(u)}^\trans f(t - S(u)) \bigg] = \sum_{n=0}^\infty \bmu^{*n} * f(t) = \bV*f(t).
	\end{equation}
	A similar calculation gives $\bV * |f|(t) = m_t^{|f|} < \infty$. Since Equation~\eqref{eq:Vrenewal} implies $\bmu * \bV \leq \bV$, we have $\bmu*\bV*|f| \leq \bV*|f| < \infty$, which means that the characteristic $\chi_f$ is well-defined and centered. To show that it is c\`adl\`ag, we may repeat the argument from \cite[Lemma 6.4]{Iksanov+al:2024}, which in turn uses Proposition~\ref{prop:char}.
	
	\item Note that $\chi_f$ satisfies~\ref{assumpt:dRimean} trivially since it is centered.
	Further observe that, for any $t\in\G$ and $i \in [p]$, by Jensen's inequality,
	\begin{equation*}
	\begin{split}
	\E[|\be_i^\trans \chi_f|^2](t)
	&\leq \E[|\be_i^\trans \bxi*m^f(t)|^2] + 2|\be_i^\trans \bmu*m^f(t)|\E[|\be_i^\trans \bxi*m^f(t)|] + |\be_i^\trans \bmu*m^f(t)|^2	\\
	&\leq 4\E[|\be_i^\trans \bxi*m^f(t)|^2].
	\end{split}
	\end{equation*}
	Thus, by Proposition~\ref{prop:char}\ref{item:dRivar conds}, it is enough to show that
	\begin{equation*}
		\int e^{-\alpha x} \E\Big[\sup_{|t-x|\leq 1}|\be_i^\trans \bxi*m^f(t)|^2 \Big] \, \dd x < \infty 
	\end{equation*}
	for all $i \in[p]$.
	Note that $\be_i^\trans \bxi*m^f(t)$ is of the form
	\begin{equation*}
	\be_i^\trans \bxi*m^f(t) = \sum_{j=1}^p \int \be_j^\trans m^f_{t-s} \, \xi^{ij}(\dd s).
	\end{equation*}
	By assumption, there exists $C$ such that $\be_j^\trans |m_t^f| \leq Ce^{\frac\alpha2 t}(1 + t^2)^{-1}$ for every $j\in[p]$.
	Hence,
	\begin{equation*}
	|\be_i^\trans \bxi * m^f(t)| \leq C \sum_{j=1}^p \int \frac{ e^{\frac\alpha2 (t-s)}}{1+(t-s)^2} \, \xi^{ij}(\dd s).
	\end{equation*}
	Further, note that for any $s,x,t \in \R$ such that $|t-x| \leq 1$,
	\begin{equation*}
		1 + (x-s)^2 \leq 1 + 2(t-s)^2 + 2(x-t)^2 \leq 3(1 + (t-s)^2).
	\end{equation*}
	Consequently,
	\begin{multline*}
		\int e^{-\alpha x} \E\Big[\sup_{|t-x|\leq 1}|\be_i^\trans \bxi*m^f(t)|^2 \Big] \, \dd x \\
		\begin{aligned}
		&\leq 9C^2 e^\alpha \E\bigg[ \sum_{j,k=1}^p \iiint \frac{1}{(1 + (x-s)^2)(1 + (x-r)^2)}
		\, \dd x \, e^{-\frac\alpha2s} e^{-\frac\alpha2r} \, \xi^{ij}(\dd s) \, \xi^{ik}(\dd r)\bigg] \\
		&\leq 9 C^2 e^\alpha \pi \E\bigg[ \bigg(\sum_{j=1}^p \int e^{-\frac\alpha2s} \, \xi^{ij}(\dd s) \! \bigg)^{\!\!2}\bigg] < \infty
		\end{aligned}
	\end{multline*}
	by~\ref{assumpt:hreprMoments}.
	\item
	Under the assumptions of~\ref{item:chif1} $\chi_f$ satisfies~\ref{assumpt:dRimean} and~\ref{assumpt:dRivar}.
	Moreover, $f$ satisfies~\ref{assumpt:dRimean} by assumption and~\ref{assumpt:dRivar} trivially.
	Therefore, Lemma~\ref{lem:well-def} implies that $\cZ_t^f$ and $\cZ_t^{\chi_f}$ are well-defined
	as limits in the a.\,s.\ sense (under admissible orderings) and unconditionally in $L^1$.
	
	We first provide a symbolic calculation suggesting that~\eqref{eq:chif} is correct.
	Note that for any $u \in \Tree$ and function $g : \R \to \R^p$ (satisfying suitable assumptions), $\bxi_u * g(s)$
	is a column vector with entries
	\begin{equation*}
	\be_i^\trans \bxi_u * g(s) = \sum_{j=1}^p \sum_{k=1}^{N_u^{ij}} \be_j^\trans g\big(s - X_{uk}^{ij}\big),	\quad i \in [p].
	\end{equation*}
	In particular,
	\begin{equation*}
	\begin{split}
	\be_{\tau(u)}^\trans \bxi_u * g(t - S(u))
	&= \sum_{j=1}^p \sum_{k=1}^{N_u^{\tau(u),j}} \be_j^\trans g\big(t - S(u) - X_{uk}^{\tau(u),j}\big)
	= \sum_{k=1}^{N_u} \be_{\tau(uk)}^\trans g(t - S(uk)).
	\end{split}
	\end{equation*}
	Therefore,
	\begin{equation*}
	\begin{split}
		\cZ_t^{\chi_f} &= \sum_{u\in\Tree} \be_{\tau(u)}^\trans \bxi_u*\bV*f(t-S(u)) - \sum_{u\in\Tree} \be_{\tau(u)}^\trans \bmu* \bV * f(t-S(u)) \\
		&= \sum_{u\in\Tree\setminus\{\varnothing\}} \be_{\tau(u)}^\trans \bV*f(t-S(u)) - \sum_{u\in\Tree} \be_{\tau(u)}^\trans \bmu* \bV * f(t-S(u)) \\
		& = \sum_{u\in\Tree} \be_{\tau(u)}^\trans (\bV - \bmu * \bV) * f(t - S(u)) - \be_{\tau(\varnothing)}^\trans \bV * f(t) \\
		& = \cZ_t^f - \be_{\tau(\varnothing)}^\trans m_t^f,
	\end{split}
	\end{equation*}
	where the last equality follows from~\eqref{eq:Vrenewal} and~\eqref{eq:V*f}.
	For a rigorous proof, note that, for any $n \in \N$,
	\begin{equation}\label{eq:V*fest}
		\begin{split}
			\Ev \bigg[\sum_{|u| = n} |\be_{\tau(u)}^\trans \bV * f(t - S(u)) |\bigg]
			&\leq \Ev \bigg[ \sum_{|u| = n} \be_{\tau(u)}^\trans \bV * |f|(t - S(u)) \bigg] \\
			&= \Ev \bigg[ \sum_{|u| \geq n} \be_{\tau(u)}^\trans |f|(t - S(u)) \bigg] \leq m_t^{|f|}.
		\end{split}
	\end{equation}
	Thus, the sum is finite a.\,s.\ under the assumptions of~\ref{item:chif1}
	since the assumptions then also apply to $|f|$ by the definition of direct Riemann integrability.
	The same is true if $f$ vanishes on the negative half-line since then also $\chi_f$ vanishes there.
	In this case, consequently, all the sums have only finitely many non-zero terms a.\,s.
	We may repeat the symbolic calculation above for the truncated sum to obtain 
	\begin{equation*}
	\begin{split}
	\sum_{0\leq|u|\leq n} \!\!  \be_{\tau(u)}^\trans \chi_{f,u} (t \!-\! S(u))
	= \sum_{0 \leq |u| \leq n} \!\! \be_{\tau(u)}^\trans f(t \!-\! S(u)) - \be_{\tau(\varnothing)}^\trans m^f_t 
	+ \sum_{|u| = n+1} \!\! \be_{\tau(u)}^\trans \bV * f(t \!-\! S(u))
	\end{split}
	\end{equation*}
	a.\,s.\ for any $n\in\N$. Note that by~\eqref{eq:V*fest} and the dominated convergence theorem, the last summand converges to $0$ in $L^1$\ as $n\to\infty$, which implies~\eqref{eq:chif}.
\end{enumerate}
\end{proof}

\subsection{Slowly growing mean}

\begin{thm}\label{thm:smallMean}
	Assume that~\ref{assumpt:Malthusian alpha} and~\ref{assumpt:hreprMoments} hold
	and let $\varphi$ be a characteristic satisfying~\ref{assumpt:dRimean} through~\ref{assumpt:local ui}.
	If the function $t \mapsto m_t^\varphi e^{-\frac\alpha2 t} (1 + t^2)$ is bounded, then
	\begin{equation}\label{eq:smallmeanconv}
	e^{-\frac\alpha2t} \cZ_t^\varphi \tod \sigma_\varphi \Big(\frac{W}{\beta}\Big)^{\! 1/2} \cN \quad \text{as } t\to\infty,\;t\in\G
	\end{equation}
	under $\P$ where
	\begin{equation*}
	\sigma_\varphi^2 = \int_\G e^{-\alpha s} \bu^\trans \Var[\bxi*m^\varphi+\varphi](s) \, \ell(\dd s)
	\end{equation*}
	and $\cN$ is a standard normal variable independent of $W$.
	
	Moreover, $\sigma_\varphi = 0$ if and only if, for every $t\in\G$, $t\geq 0$, $\cZ_t^\varphi = \be_{\tau(\varnothing)}^\trans m^\varphi_t$ a.\,s.
\end{thm}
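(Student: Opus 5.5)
The plan is to reduce the statement to the centered case, Theorem~\ref{thm:centeredChar}(i), by subtracting from $\cZ_t^\varphi$ its conditional mean given the ancestor's type, exactly as in the single-type proof in \cite{Iksanov+al:2024}. Put $f(t) \coloneqq \E[\varphi](t)$; this is a deterministic c\`adl\`ag characteristic satisfying~\ref{assumpt:dRimean}, and $m^f = m^\varphi$. We split
\begin{equation*}
\cZ_t^\varphi - \be_{\tau(\varnothing)}^\trans m^\varphi_t = \cZ_t^{\varphi - f} + \big(\cZ_t^f - \be_{\tau(\varnothing)}^\trans m^f_t\big) = \cZ_t^{\varphi-f} + \cZ_t^{\chi_f},
\end{equation*}
where the last identity is Lemma~\ref{lem:detRecentered}(iii). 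Part (ii) of that lemma applies because $m^\varphi_t e^{-\frac\alpha2 t}(1+t^2)$ is bounded. Both sums are well defined by Lemma~\ref{lem:well-def}, and the first equality holds by linearity of unconditionally convergent series.

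Next, set $\chi \coloneqq \varphi - f + \chi_f$. Since $\bV * f = m^\varphi$, we have $\chi_f = \bxi * m^\varphi - \bmu * m^\varphi$, so
\begin{equation*}
\chi = \varphi + \bxi*m^\varphi - \E[\varphi + \bxi*m^\varphi].
\end{equation*}
Thus $\chi$ is centered and $\E[\chi^2] = \Var[\bxi*m^\varphi + \varphi]$. It satisfies~\ref{assumpt:dRivar}, because $\varphi$ does (subtracting the mean only lowers the variance), $\chi_f$ does by Lemma~\ref{lem:detRecentered}(ii), and the combination is controlled by~\ref{assumpt:local ui}, which keeps~\ref{assumpt:dRivar} stable under linear combinations. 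Theorem~\ref{thm:centeredChar}(i) then gives the claimed limit for $e^{-\frac\alpha2 t}\cZ_t^\chi$, with variance constant $\sigma_\varphi^2/\beta$ times $W$.

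It remains to remove the centering term. Since $e^{-\frac\alpha2 t}\be_{\tau(\varnothing)}^\trans m_t^\varphi = O((1+t^2)^{-1}) \to 0$ deterministically, Slutsky's theorem yields~\eqref{eq:smallmeanconv}.

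For the characterization of $\sigma_\varphi = 0$: if $\sigma_\varphi = 0$, then $\E[\chi^2](s) = 0$ for $\ell$-a.e.\ $s$, because $\bu$ has strictly positive entries. Right-continuity of $s \mapsto \E[\chi^2](s)$, which follows from the c\`adl\`ag paths and~\ref{assumpt:local ui}, upgrades this to $\chi(s) = 0$ a.s.\ for every $s$. By the variance formula of Lemma~\ref{lem:mtg}, $\Varv[\cZ_t^\chi] = \Ev[\cZ_t^{\E[\chi^2]}] = 0$, so $\cZ^\varphi_t = \be_{\tau(\varnothing)}^\trans m^\varphi_t$ a.s. Conversely, if $\cZ_t^\varphi = \be_{\tau(\varnothing)}^\trans m_t^\varphi$ a.s.\ for all $t \geq 0$, then $\cZ^\chi_t = 0$, hence $\Ev[\cZ_t^{\E[\chi^2]}] = 0$ for all $t \geq 0$. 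The ancestor's own contribution then forces $\E[\chi^2](t) = 0$ for $t \geq 0$. For $t < 0$ we use the decomposition along the first generation, $\cZ^\chi_t = \be_{\tau(\varnothing)}^\trans\chi(t) + \sum_{|u|=1}\cZ^{\chi}_{u}(t-S(u))$, together with irreducibility, to propagate vanishing to all $t$ in the support of the relevant $\bmu$-shifts.

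The main obstacle is this last extension to negative times. I would first try to mimic the argument of \cite{Iksanov+al:2024}: write $\sigma_\varphi^2$ as $\lim_t e^{-\alpha t}\Var[\cZ_t^\chi]$, computed via Proposition~\ref{thm:asconv} applied to $\E[\chi^2]$, and note that this limit vanishes once all $\cZ_t^\chi$, $t \geq 0$, vanish.
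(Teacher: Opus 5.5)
Your proposal is correct and is essentially the paper's proof: the same split $\varphi=(\varphi-\E[\varphi])+\E[\varphi]$, Lemma~\ref{lem:detRecentered} applied to $\chi_{\E[\varphi]}$, Theorem~\ref{thm:centeredChar}(i) for the centered characteristic, Slutsky for the vanishing mean term, and an equivalent argument for $\sigma_\varphi=0\Rightarrow\cZ_t^\varphi=\be_{\tau(\varnothing)}^\trans m_t^\varphi$. For the converse, which the paper passes over in one line, your fallback is the right tool and makes the propagation to negative times unnecessary: $\Var^i[\cZ_t^\chi]=\E^i[\cZ_t^{\E[\chi^2]}]$, and Lemma~\ref{lem:meanviarenewal} (the renewal theorem for the mean, rather than Proposition~\ref{thm:asconv}) gives $e^{-\alpha t}\Var^i[\cZ_t^\chi]\to\bv_i\sigma_\varphi^2/\beta$, so $\cZ_t^\chi=0$ a.s.\ for all $t\geq0$ forces $\sigma_\varphi=0$.
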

\begin{proof}
	Decompose $\varphi = (\varphi - \E[\varphi]) + \E[\varphi]$. Since $m_t^\varphi = m_t^{\E[\varphi]}$, we have
	\begin{equation*}
	\sup_{t \in \G} m_t^{\E[\varphi]} e^{-\frac\alpha2 t} (1 + t^2) = \sup_{t \in \G} m_t^\varphi e^{-\frac\alpha2 t} (1 + t^2).
	\end{equation*}
	Further, $\E[\varphi]$ is a deterministic characteristic. Consequently, Lemma~\ref{lem:detRecentered} applies to
	\begin{equation*}
	\chi \coloneqq \chi_{\E[\varphi]} = \bxi * \bV * \E[\varphi] - \bmu * \bV * \E[\varphi].
	\end{equation*}
	The lemma implies that $\chi$ is well-defined, centered and satisfies~\ref{assumpt:dRivar} and~\ref{assumpt:local ui}.
	Moreover, by~\eqref{eq:chif}, we have
	\begin{equation}\label{eq:chismall}
	\cZ_t^\varphi = \cZ_t^{\varphi - \E[\varphi]} + \cZ_t^{\chi} + \be_{\tau(\varnothing)}^\trans m_t^\varphi
	= \cZ_t^{\varphi - \E[\varphi] + \chi} + \be_{\tau(\varnothing)}^\trans m_t^\varphi.
	\end{equation}
	By assumption, $e^{-\frac\alpha2t} m_t^\varphi \to 0$ as $t\to\infty$.
	Observe that the characteristic $\varphi - \E[\varphi] + \chi$ is centered and satisfies~\ref{assumpt:dRivar}
	since $\chi$ and $\varphi - \E[\varphi]$ satisfy~\ref{assumpt:dRivar} and~\ref{assumpt:local ui}.
	Hence,~\eqref{eq:smallmeanconv} follows from Theorem~\ref{thm:centeredChar}.
	
	Finally, note that, since all entries of the vector $\bu$ are positive, $\sigma_\varphi = 0$ if and only if for a.\,e.~$t\in\G$,
	$\varphi(t) - \E[\varphi](t) + \chi(t) = 0$. Since the characteristics $\chi$ and $\varphi - \E[\varphi]$ have a.\,s.\ c\`adl\`ag paths,
	this is equivalent to $\varphi - \E[\varphi] + \chi = 0$ on $\G$ a.\,s.
	In view of~\eqref{eq:chismall}, this means $\cZ_t^\varphi = \be_{\tau(\varnothing)}^\trans m^\varphi_t$ a.\,s.
\end{proof}

\subsection{General characteristic: proof of Theorem~\ref{thm:main}}\label{sec:main-proof}

\begin{proof}[Proof of Theorem~\ref{thm:main}]
Observe that $H_\Lambda, H_\dLambda$ may be written in the form
\begin{align*}
H_\Lambda(t)
&= \sum_{\lambda\in \intLambda} \int (\be_1^\trans \exp(\lambda, t-s, k_\lambda) \otimes \Ip) W(\lambda) \E[\varphi](s) \, \ell(\dd s) \1_{[0,\infty)}(t), \\
H_\dLambda(t)
&= \sum_{\lambda\in \dLambda} \int (\be_1^\trans \exp(\lambda, t-s, k_\lambda) \otimes \be_{\tau(\varnothing)}^\trans)  \Av_\lambda \E[\varphi](s) \,\ell(\dd s) \1_{[0,\infty)}(t).
\end{align*}
In particular, Lemmas~\ref{lem:mtgsasCMJ} and~\ref{lem:mtgLasCMJ} imply that, for $t \geq 0$,
\begin{equation*}
\Ev[H_\Lambda](t)
= \sum_{\lambda \in \intLambda} \int (\be_1^\trans \exp(\lambda, t-s, k_\lambda) \otimes \Ip) \Av_\lambda \E[\varphi](s) \, \ell(\dd s),
\end{equation*}
and
\begin{equation*}
\begin{split}
H_\Lambda(t)
= \sum_{\lambda \in \intLambda}\int (\be_1^\trans \exp(\lambda, -s, k_\lambda) \otimes \Ip) \cZ_t^{\phi_\lambda + \chi_\lambda} \E[\varphi](s) \, \ell(\dd s)
= \cZ_t^{\psi_\Lambda} 
\end{split}
\end{equation*}
for characteristics
\begin{align}
\psi_\lambda(t) &= \int (\be_1^\trans \exp(\lambda, -s, k_\lambda) \otimes \Ip) (\phi_\lambda(t) + \chi_\lambda(t)) \E[\varphi](s) \, \ell(\dd s), \\
\psi_\Lambda(t) &= \sum_{\lambda\in\intLambda} \psi_\lambda(t).
\end{align}
Similarly, $H_\dLambda(t) = \be_{\tau(\varnothing)}^\trans m_t^{\E[\psi_\dLambda]}$ for the characteristics
\begin{align*}
\varphi_\lambda(t) &= \int (\be_1^\trans \exp(\lambda, -s, k_\lambda) \otimes \Ip) \phi_\lambda(t) \E[\varphi](s) \, \ell(\dd s), \\
\psi_\dLambda(t) &= \sum_{\lambda\in\dLambda} \varphi_\lambda(t).
\end{align*}
Observe that each entry of the matrix $\psi_\Lambda(t)$ is a linear combination of entries of $\phi_\lambda(t), \chi_\lambda(t)$
for $\lambda\in\intLambda$ with coefficients given by $\int e^{-\alpha s} \frac{(-s)^j}{j!} \E[\be_i^\trans\varphi](s) \, \ell(\dd s)$.
Similarly, the entries of $\psi_\dLambda(t)$ are linear combinations of entries of $\phi_\lambda(t)$ for $\lambda \in \dLambda$.
In particular, Lemma~\ref{lem:chilambda} implies that the characteristics $\psi_\Lambda$, $\psi_\dLambda$ satisfy~\ref{assumpt:dRimean} through~\ref{assumpt:local ui} and so does $\varphi - \psi_\Lambda - \psi_\dLambda$.
Moreover, since $\E[\psi_\dLambda]$ is deterministic and supported on $[0,\infty)$, Lemma~\ref{lem:detRecentered} implies that
\begin{equation*}
	H_\dLambda(t) = \be_{\tau(\varnothing)}^\trans m_t^{\E[\psi_\dLambda]} = \cZ_t^{\E[\psi_\dLambda]} - \cZ_t^{\chi_\dLambda},
\end{equation*}
where
\begin{equation*}
	\chi_\dLambda = \bxi*m^{\E[\psi_\dLambda]} - \bmu*m^{\E[\psi_\dLambda]}.
\end{equation*}
Observe that for any $\lambda \in \Lambda$, by Lemma~\ref{lem:mtgsasCMJ},
\begin{equation*}
m_t^{\phi_\lambda} = (\exp(\lambda, t, k_\lambda)\otimes \Ip) \Ev [W_t(\lambda)] \1_{[0,\infty)}(t)
= (\exp(\lambda, t, k_\lambda) \otimes \Ip) \Av_\lambda \1_{[0,\infty)}(t).
\end{equation*}
In particular,
\begin{equation}\label{eq:xi*mlambda}
	\begin{split}
		(\Ikl \otimes \bxi) * m^{\phi_\lambda}(t) &= \int  \1_{[0,\infty)}(t-x) \exp(\lambda, t-x, k_\lambda) \otimes \bxi(\dd x) \Av_\lambda \\
		&= \int \exp(\lambda,t-x,k_\lambda) \otimes \bxi(\dd x)\Av_\lambda \1_{[0,\infty)}(t) - \phi_\lambda(t)
	\end{split}
\end{equation}
and thus
\begin{multline}\label{eq:xi*psidLambda}
\bxi*m^{\E[\psi_\dLambda]}(t)
= \bxi*m^{\psi_\dLambda}(t) \\
= \sum_{\lambda \in\dLambda}\iint (\be_1^\trans \exp(\lambda,t-x-s,k_\lambda) \otimes \bxi(\dd x)) \Av_\lambda \E[\varphi](s)
\, \ell(\dd s) \1_{[0,\infty)}(t) - \psi_\dLambda(t).
\end{multline}
That is, $\chi_\dLambda$ may be decomposed into $\chi_\dLambda = \chi - (\psi_\dLambda - \E[\psi_\dLambda])$ where
\begin{equation*}
	\begin{split}
	\chi(t)
	&= \sum_{\lambda \in\dLambda}\iint (\be_1^\trans \exp(\lambda,t-x-s,k_\lambda) \otimes \bxi(\dd x)) \Av_\lambda \E[\varphi](s) \, \ell(\dd s) \1_{[0,\infty)}(t) \\
	&\hphantom{=}~ -\sum_{\lambda \in\dLambda}\iint (\be_1^\trans \exp(\lambda,t-x-s,k_\lambda) \otimes \bmu(\dd x)) \Av_\lambda \E[\varphi](s) \, \ell(\dd s) \1_{[0,\infty)}(t).
\end{split}
\end{equation*}
This leads to the decomposition
\begin{equation}\label{eq:generalDecomp}
	\cZ_t^\varphi - H_\Lambda(t) - H_\dLambda(t) = \cZ_t^{\varphi - \psi_\Lambda - \psi_\dLambda} + \cZ_t^\chi.
\end{equation}
Let us treat both terms separately. By~\eqref{eq:meanAsymptotics},
\begin{equation*}
	\E\big[ \cZ^{\varphi - \psi_\Lambda - \psi_\dLambda} \big] = r(t) = O\big(e^{\frac\alpha2t}/(1+t^2) \big),
\end{equation*}
and the characteristic $\varphi - \psi_\Lambda - \psi_\dLambda$ satisfies the assumptions of Theorem~\ref{thm:smallMean}.
Therefore,
\begin{equation}\label{eq:conv1}
	e^{-\frac\alpha2t} \cZ_t^{\varphi - \psi_\Lambda - \psi_\dLambda} \tod \sigma \Big( \frac{W}{\beta}\Big)^{\!1/2} \cN
\end{equation}
under $\P$, where
\begin{equation}\label{eq:sigma}
\sigma^2
= \int_\R e^{-\alpha s} \bu^\trans \Var[\bxi * m^{\varphi - \psi_\Lambda - \psi_\dLambda} + \varphi - \psi_\Lambda - \psi_\dLambda](s) \, \ell(\dd s).
\end{equation}
Moreover, as stated in Theorem~\ref{thm:smallMean}, $\sigma = 0$ if and only if $\cZ_t^{\varphi - \psi_\Lambda - \psi_\dLambda} = \be_{\tau(\varnothing)} r(t)$ a.\,s.\ for any $t \in \G$, $t \geq 0$.
Formula~\eqref{eq:sigma} may be further simplified using relation~\eqref{eq:xi*mlambda}. Namely, since each $\chi_\lambda$ is centered,
we obtain, for $\lambda \in \Lambda$,
\begin{align*}
\bxi * m^{\psi_\lambda}(t)
&= \int (\be_1^\trans \exp(\lambda,-s,k_\lambda) \otimes \Ip) \bxi*m^{\phi_\lambda}(t) \E[\varphi](s) \, \ell(\dd s) \\
&= \iint (\be_1^\trans \exp(\lambda,t-x-s,k_\lambda) \otimes \bxi(\dd x)) \Av_\lambda \E[\varphi](s) \, \ell(\dd s) \1_{[0,\infty)}(t) \\
&\hphantom{=}~- \psi_\lambda(t) + \int (\be_1^\trans \exp(\lambda, -s, k_\lambda) \otimes \Ip) \chi_\lambda(t) \E[\varphi](s) \, \ell(\dd s).
\end{align*}
Further, the last term equals
\begin{align*}
			\int& (\be_1^\trans \exp(\lambda, -s, k_\lambda) \otimes \Ip) \chi_\lambda(t) \E[\varphi](s) \, \ell(\dd s) \\
			&= \iint (\be_1^\trans \exp(\lambda,t-x-s,k_\lambda) \otimes \bxi(\dd x)) \Av_\lambda \E[\varphi](s) \, \ell(\dd s) \1_{(-\infty,0)}(t) \\
			&\hphantom{=}~- \int (\be_1^\trans \exp(\lambda, t-s,k_\lambda) \otimes \Ip) \Av_\lambda \E[\varphi](s) \, \ell(\dd s) \1_{(-\infty,0)}(t).
\end{align*}
Putting things together,
\begin{equation*}
	\begin{split}
	\bxi*m^{\psi_\Lambda}(t) + \psi_\Lambda(t)
	&= \sum_{\lambda \in \intLambda} \iint (\be_1^\trans \exp(\lambda,t-x-s,k_\lambda) \otimes \bxi(\dd x)) \Av_\lambda \E[\varphi](s) \, \ell(\dd s) \\
	&\hphantom{=}~ - \sum_{\lambda \in \intLambda} \int (\be_1^\trans \exp(\lambda, t-s,k_\lambda) \otimes \Ip) \Av_\lambda \E[\varphi](s) \, \ell(\dd s) \1_{(-\infty,0)}(t).
		\end{split}
	\end{equation*}
	Note that the second sum is deterministic. This together with~\eqref{eq:xi*psidLambda} gives
	\begin{equation*}
		\Var[\bxi * m^{\varphi - \psi_\Lambda - \psi_\dLambda} + \varphi - \psi_\Lambda - \psi_\dLambda](t) = \Var[\bxi*f^{\varphi} + \varphi](t)
	\end{equation*}
	for
	\begin{align*}
		f^\varphi(t) &= m^\varphi(t) - \sum_{\lambda \in \intLambda} \int (\be_1^\trans \exp(\lambda,t-s,k_\lambda) \otimes \Ip) \Av_\lambda \E[\varphi](s) \, \ell(\dd s) \\
		&\hphantom{= m^\varphi(t) }- \sum_{\lambda\in\dLambda} \int (\be_1^\trans \exp(\lambda,t-s,k_\lambda) \otimes \Ip) \Av_\lambda \E[\varphi](s) \, \ell(\dd s) \1_{[0,\infty)}(t) \\
		&= r(t)\1_{[0,\infty)}(t) - \sum_{\lambda \in \intLambda} \int (\be_1^\trans \exp(\lambda,t-s,k_\lambda) \otimes \Ip) \Av_\lambda \E[\varphi](s) \, \ell(\dd s)\1_{(-\infty,0)}(t),
	\end{align*}
	which is equivalent to~\eqref{eq:hvarphi}.
	Let us now turn to the second term in~\eqref{eq:generalDecomp}. The characteristic $\chi$ may be represented as
	\begin{equation*}
		\chi(t) = \sum_{\lambda\in\dLambda} (R_\lambda(t) - \E[ R_\lambda(t)])\1_{[0,\infty)}(t)
	\end{equation*}
	for the random vectors
	\begin{equation*}
		R_\lambda(t) = \iint (\be_1^\trans \exp(\lambda,t-x-s,k_\lambda) \otimes \bxi(\dd x)) \Av_\lambda \E[\varphi](s) \, \ell(\dd s),
		\quad	\lambda \in \partial \Lambda.
	\end{equation*}
	Using the binomial formula, we get
\begin{equation}
\begin{split}
R_{\lambda}(t)
&= e^{\frac\alpha2t}\sum_{k=0}^{k_\lambda-1} e^{\imag \Im(\lambda) t} t^k
\sum_{m=0}^{k_\lambda-k-1} \binom{k+m}{k} \int e^{-\lambda x} (-x)^m \, \bxi(\dd x) \bb_{\lambda,k+m,\varphi} \\
&= e^{\frac\alpha2t} \sum_{k=0}^{k_\lambda-1} e^{\imag \Im(\lambda)t}t^k Y(\lambda,k).
\end{split}
\end{equation}

In particular, if $n=-1$, then $\chi$ equals the zero function a.\,s.\ and $\cZ_t^\varphi - H_\Lambda(t) - H_\dLambda(t) = \cZ_t^{\varphi - \psi_\Lambda - \psi_\dLambda}$ for all $t\geq 0$, a.\,s., which completes the proof in the case (i).

Otherwise, by \cite[Lemma 6.7]{Iksanov+al:2024}, $\chi$ satisfies the assumption of Theorem~\ref{thm:centeredChar}(ii) with $\theta = 2n$ and
\begin{equation*}
\frac{2n+1}{t^{2n+1}} \int_{[0,t]} e^{-\alpha s} \bu^\trans \Var[\chi](s) \, \ell(\dd s)
\to \sum_{i=1}^p \bu_i \sum_{\lambda\in\dLambda} \Var[Y^{(i)}(\lambda,n)] = \rho^2_n
\quad	\text{as } t \to \infty.
\end{equation*}
Therefore,
\begin{equation*}
\Big(\frac{e^{\alpha t} t^{2n+1} \rho_n^2}{2n+1}\Big)^{\!\!-1/2} \cZ_t^\chi \tod \Big(\frac{W}\beta\Big)^{\!1/2} \cN,
\end{equation*}
while~\eqref{eq:conv1} implies that $(e^{\alpha t} t^{2n+1})^{-1/2} \cZ_t^{\varphi-\psi_\Lambda-\psi_\dLambda}
\to 0$ in $\P$-probability as $t\to\infty$,
which completes the proof in case (ii).
\end{proof}

\begin{appendix}
	
	\section{Mean expansion in the lattice case}	\label{sec:mean expansion lattice}
	
	In this section, we present the calculation showing that the expansion
	\begin{equation*}	\tag{\ref{eq:mean expansion lattice}}
	m^\varphi_t
	= \sum_{\lambda\in\Lambda} \sum_{k=1}^{k_\lambda} B_{\lambda,k} e^{\lambda k} \sum_{s=0}^\infty e^{\lambda(t-s)} \E[\varphi](s) \sum_{l=0}^{k-1} (-1)^{k-l}  \binom{s}{l} \binom{t+k-l-1}{k-l-1} + O\big(e^{(\theta + \eps)t}\big)
	\end{equation*}
	for $t \to \infty$, $t \in \N_0$ can be written in the form~\eqref{eq:meanAsymptotics}.
	To this end, we first recall the following notation for the Stirling numbers of the first and second kind:
	\begin{equation*}
	\stirfst{n}{k}
	\qquad\text{and}\qquad
	\stirscd{n}{k},
	\end{equation*}
	the number of permutations of a set of $n$ elements having exactly $k$ cycles,
	and the number of partitions of a set of $n$ elements into exactly $k$ nonempty subsets, respectively.
	We shall use various formulae involving the Stirling numbers of the first and second kind
	and our main source for these is \cite[Section~6.1]{Graham+al:1994}.
	For instance, for $l,m,s,t \in \N_0$, using the notation $(s)_{(l)} \coloneqq s(s-1)\ldots(s-l+1)$ and $(t+1)^{(m)} = (t+1)(t+2)\ldots(t+m)$
	for the falling and rising factorial, respectively,
	we infer from \cite[Eqs.~(6.13) and (6.11)]{Graham+al:1994}
	\begin{align*}
	\binom{s}{l} &= \frac{(s)_{(l)}}{l!} = \frac1{l!}\sum_{j=0}^l (-1)^{l-j} \stirfst{l}{j} s^j,	\\
	\binom{t+m}{m} &= \frac{(t+1)^{(m)}}{m!} = \frac1{m!} \sum_{k=0}^m \stirfst{m}{k} (t+1)^k.
	\end{align*}
	This gives
	\begin{multline*}
	\sum_{l=0}^{k-1} (-1)^{k-l} \binom{s}{l} \binom{t+k-l-1}{k-l-1} \\
	\begin{aligned}
	&= \sum_{j=0}^{k-1} \sum_{i=0}^{k-1-j} \frac{(-1)^{k-j}}{(k-1)!} s^j (t+1)^i \sum_{l=j}^{k-1-i} \stirfst{l}{j} \stirfst{k-1-l}{i} \binom{k-1}{l} \\
	&= \sum_{j=0}^{k-1} \sum_{i=0}^{k-1-j} \frac{(-1)^{k-j}}{(k-1)!} s^j (t+1)^i \stirfst{k-1}{i+j} \binom{i+j}{j} \\
	&= \frac{(-1)^k}{(k-1)!} \sum_{j=0}^{k-1} \sum_{m=j}^{k-1} (-s)^j (t+1)^{m-j} \stirfst{k-1}{m} \binom{m}{j} \\
	&= \frac{(-1)^k}{(k-1)!} \sum_{m=0}^{k-1} \stirfst{k-1}{m} (t-s+1)^m \\
	&= \frac{(-1)^k}{(k-1)!} \sum_{j=0}^{k-1} (t-s)^j \sum_{m=j}^{k-1} \stirfst{k-1}{m} \binom{m}{j} \\
	&= \frac{(-1)^k}{(k-1)!} \sum_{j=0}^{k-1} \stirfst{k}{j+1} (t-s)^j,
	\end{aligned}
	\end{multline*}
	where in the third and in the last line, we have used \cite[Eq.~(6.29)]{Graham+al:1994} and \cite[Eq.~(6.16)]{Graham+al:1994}, respectively.
	Plugging the last expression into \eqref{eq:mean expansion lattice} yields
	\begin{equation*}
	\begin{split}
	m^\varphi_t
	&= \sum_{\lambda\in\Lambda} \sum_{k=1}^{k_\lambda} B_{\lambda,k} e^{\lambda k} \int e^{\lambda(t-s)} \E[\varphi](s) \frac{(-1)^k}{(k-1)!}
	\sum_{j=0}^{k-1} \stirfst{k}{j+1} (t-s)^j \, \ell(\dd s) + r(t) \\
	&= \sum_{\lambda\in\Lambda} \sum_{j=1}^{k_\lambda} \int e^{\lambda(t-s)} (t-s)^{j-1}
	\sum_{k=j}^{k_\lambda} \frac{(-1)^k}{(k-1)!} \stirfst{k}{j} B_{\lambda,k} e^{\lambda k} \E[\varphi](s) \, \ell(\dd s) + r(t),
	\quad	t \in \Z.
	\end{split}
	\end{equation*}
	This expression agrees with that in \eqref{eq:meanAsymptotics}, provided that the following identity holds:
	\begin{equation}	\label{eq:coefficients of expansions G and L}
	\sum_{k=j}^{k_\lambda} \frac{(-1)^k}{(k-1)!} \stirfst{k}{j} B_{\lambda,k} e^{\lambda k} = \frac1{(j-1)!} A_{\lambda,j}.
	\end{equation}
	We now establish \eqref{eq:coefficients of expansions G and L} via a rather technical and computation-heavy proof.
	
	To this end, notice that \eqref{eq:coefficients of expansions G and L} can be read entrywise
	and, therefore, follows from the following one-dimensional result.
	
	\begin{lem}	\label{lem:coefficients in Laurent series}
		Let $D \subseteq \C$ be open and $f,g: D \to \C$ be meromorphic in $D$.
		Assume that, for some $\lambda \in D$, $f$ has a pole in $\lambda$ of multiplicity $k_\lambda \in \N$
		with Laurent expansion
		\begin{equation*}
		f(z) = a_{k_{\lambda}} (z-\lambda)^{-k_\lambda} + \ldots + a_{1} (z-\lambda)^{-1} + h_1(z)
		\end{equation*}
		where $h_1: D \to \C$ is holomorphic in a neighborhood of $z=\lambda$ and $a_1,\ldots,a_{k_\lambda} \in \C$,
		$a_{k_\lambda} \not = 0$.
		Further, suppose that $f(z) = g(e^{-z})$ for $z \in D$.
		Then $e^{-\lambda}$ is a pole of $g$ of multiplicity $k_\lambda$ and 
		\begin{equation*}
		g(z) = b_{k_\lambda} (z-e^{-\lambda})^{-k} + \ldots + b_1 (z-e^{-\lambda})^{-1} + h_2(z)
		\end{equation*}
		for a function $h_2: D \to \C$ which is holomorphic in a neighborhood of $z = e^{-\lambda}$
		and for coefficients $b_1,\ldots,b_{k_\lambda}$ satisfying 
		\begin{equation}	\label{eq:coefficients of expansions f and g}
		\sum_{k=i}^{k_\lambda} \frac{(-1)^k}{(k-1)!} \stirfst{k}{i} b_k e^{\lambda k} = \frac1{(i-1)!} a_i.
		\end{equation}
	\end{lem}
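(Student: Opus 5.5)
The plan is to compare the two Laurent expansions through the substitution $w = e^{-z}$. Near $z=\lambda$, put $w-e^{-\lambda} = e^{-\lambda}(e^{-(z-\lambda)}-1)$. Writing $h = z-\lambda$, this gives $w - e^{-\lambda} = -e^{-\lambda} h\,u(h)$ with $u(h) = (1-e^{-h})/h$. The function $u$ is holomorphic and satisfies $u(0)=1$, so $z\mapsto e^{-z}$ is locally biholomorphic. Hence $g = f\circ(-\log)$ near $e^{-\lambda}$, and a pole of $f$ of order $k_\lambda$ corresponds to a pole of $g$ of the same order. This settles the first claim and the existence of $b_1,\dots,b_{k_\lambda}$ and $h_2$.

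For the identity \eqref{eq:coefficients of expansions f and g}, I would substitute into the $g$-expansion:
\begin{equation*}
f(\lambda+h) = \sum_{k=1}^{k_\lambda} b_k (-1)^k e^{\lambda k} h^{-k} u(h)^{-k} + (\text{holomorphic}).
\end{equation*}
The coefficient $a_i$ of $h^{-i}$ is therefore $\sum_{k\ge i} (-1)^k e^{\lambda k} b_k\, [h^{k-i}]\, u(h)^{-k}$. It then suffices to show the purely combinatorial identity
\begin{equation*}
[h^{k-i}]\Big(\frac{h}{1-e^{-h}}\Big)^{k} = \frac{(i-1)!}{(k-1)!}\stirfst{k}{i}, \qquad 1\le i\le k.
\end{equation*}

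The main step is this coefficient extraction, and I would do it by Lagrange inversion. Set $y = 1-e^{-h}$, so that $h = -\log(1-y)$. The residue form of Lagrange inversion gives
\begin{equation*}
[h^{k-i}]\Big(\frac{h}{y(h)}\Big)^k = \frac{k-i}{k}\,[y^{k-i}]\Big(\frac{-\log(1-y)}{y}\Big)^{k-i}\cdot(\ldots).
\end{equation*}
A cleaner route, which I would try first, is to write the coefficient as a contour integral $\frac{1}{2\pi \imag}\oint h^{i-1}(1-e^{-h})^{-k}\,dh$ and change variables to $y$. Since $dh = dy/(1-y)$, this equals
\begin{equation*}
[y^{k-1}]\,(-\log(1-y))^{i-1}(1-y)^{-1}.
\end{equation*}
The generating function $(-\log(1-y))^{j}/j! = \sum_n \stirfst{n}{j} y^n/n!$ then makes the extraction routine. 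Differentiating it gives $(-\log(1-y))^{i-1}/(1-y) = \frac{1}{i}\frac{d}{dy}(-\log(1-y))^{i}$. So the coefficient of $y^{k-1}$ is $\frac{1}{i}\cdot k\cdot i!\stirfst{k}{i}/k! = (i-1)!\stirfst{k}{i}/(k-1)!$, as required.

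Multiplying this identity by $(-1)^k e^{\lambda k} b_k$ and summing over $k$ yields $a_i = (i-1)!\sum_{k\ge i}\frac{(-1)^k}{(k-1)!}\stirfst{k}{i}b_k e^{\lambda k}$, which is exactly \eqref{eq:coefficients of expansions f and g}. The only delicate points are careful bookkeeping of the sign $(-1)^k$ from $-e^{-\lambda}h$ and checking that $y\mapsto h$ is a valid local change of variables for the residue computation. Both hold because $u(0)=1$.
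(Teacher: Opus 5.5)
Your proof is correct, but it runs in the opposite direction to the paper's. You substitute $w=e^{-z}$ into the Laurent expansion of $g$. This makes $a_i$ appear immediately as a linear combination of $b_i,\dots,b_{k_\lambda}$, which is exactly the shape of \eqref{eq:coefficients of expansions f and g}. The lemma then reduces to the single extraction
\[
[h^{k-i}]\bigl(h/(1-e^{-h})\bigr)^{k}=\frac{(i-1)!}{(k-1)!}\stirfst{k}{i}.
\]
Your residue computation settles this cleanly: change variables to $y=1-e^{-h}$, then use the generating function $(-\log(1-y))^{j}/j!=\sum_n\stirfst{n}{j}y^n/n!$. The sign $(-1)^k e^{\lambda k}$ and the restriction to $k\ge i$ are handled correctly.

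The paper instead expresses each $b_k$ in terms of the $a_n$. It writes $b_k$ as a derivative of $(\Psi\cdot F)\circ(-\log)$, where $F(z)=(z-\lambda)^{k_\lambda}f(z)$ and $\Psi(z)=\bigl((e^{-z}-e^{-\lambda})/(z-\lambda)\bigr)^{k_\lambda}$. It applies Fa\`a di Bruno twice, with partial Bell polynomials producing Stirling numbers of both kinds, and substitutes the result into the left-hand side. It then needs a Stirling orthogonality-type identity \cite[Exercise 6.19]{Graham+al:1994} to kill all terms with $n>i$. It also imports the pole order and the case $i=k_\lambda$ from \cite{Kolesko+al:2025}.

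Your route avoids this inversion step entirely. It proves the pole-order claim directly from the local biholomorphy of $e^{-z}$, and it is much shorter and self-contained. The paper's route has one by-product yours lacks: an explicit inverse formula for the $b_k$ in terms of the $a_n$.

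Delete the incomplete Lagrange-inversion display containing ``$(\ldots)$''. It is not needed, since your contour-integral computation already does the job.
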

	It is possible that this lemma can be found in the existing literature;
	however, we were not able to identify a suitable reference.
	
	\begin{proof}[Proof of Lemma \ref{lem:coefficients in Laurent series}]
		The fact that $e^{-\lambda}$ is a pole of $g$ of order $k_\lambda$ and that \eqref{eq:coefficients of expansions f and g} holds for $j=k_\lambda$	follows from \cite[Lemma 2.6]{Kolesko+al:2025}.
		Now let
		\begin{align*}
		F(z) &= (z-\lambda)^{k_\lambda} f(z),
		\end{align*}
		so that
		\begin{align}
		a_{k} &= \frac{1}{(k_\lambda - k)!} F^{(k_\lambda-k)}(\lambda) , \\
		b_{k} &= \frac{1}{(k_\lambda - k)!} \Big( (\Psi \cdot F)\circ (-\log) \Big)^{(k_\lambda-k)}(e^{-\lambda}), \label{eq:bk}
		\end{align}
		where $\log$ is a branch of the complex logarithm that is holomorphic in some neighbourhood of $e^{-\lambda}$ and
		\begin{equation*}
		\Psi(z) = \bigg( \frac{e^{-z} - e^{-\lambda}}{z-\lambda}\bigg)^{k_\lambda}.
		\end{equation*}
		We begin by applying Fa\'a di Bruno formula to \eqref{eq:bk} for $k < k_\lambda$. In what follows, $\mathbb{B}_{n,k}$ denote the ordinary partial Bell polynomials. In the following calculation we use well-known properties of $\mathbb{B}_{n,k}$, which may be found e.\,g.\ in \cite[Chapter 3.3]{Comtet:1974}.
		\begin{equation*}
		\begin{split}
		b_k 
		&= \frac{1}{(k_\lambda-k)!} \sum_{j=1}^{k_\lambda-k} (\Psi\cdot L)^{(j)} (\lambda) \mathbb{B}_{{k_\lambda-k},j}\big((-\log)'(e^{-\lambda}), \cdots, (-\log)^{(k_\lambda-k-j+1)}(e^{-\lambda})\big) \\
		&= \frac{1}{(k_\lambda-k)!} \sum_{j=1}^{k_\lambda-k} (\Psi\cdot L)^{(j)} (\lambda) \mathbb{B}_{{k_\lambda-k},j}\big(-e^\lambda, \cdots, (-e^\lambda)^{k_\lambda-k-j+1}(k_\lambda-k-j)!\big) \\
		&= \frac{1}{(k_\lambda-k)!} \sum_{j=1}^{k_\lambda-k} (\Psi\cdot L)^{(j)} (\lambda) (-e^\lambda)^{k_\lambda-k} \mathbb{B}_{{k_\lambda-k},j}\big(0!, \cdots, (k_\lambda-k-j)!\big) \\
		&= \frac{(-e^\lambda)^{k_\lambda-k}}{(k_\lambda-k)!} \sum_{j=1}^{k_\lambda-k} \stirfst{k_\lambda-k}{j} (\Psi\cdot L)^{(j)}(\lambda).		
		\end{split}
		\end{equation*}
		Note that we may include $j=0$ in the sum since the Stirling number is then $0$. Further,
		\begin{equation*}
		\frac{e^{-z} - e^{-\lambda}}{z-\lambda} = e^{-\lambda} \sum_{k=0}^\infty \frac{(z-\lambda)^k}{k!} \frac{(-1)^{k+1}}{k+1},
		\end{equation*}
		hence, using Fa\`a di Bruno formula again, we obtain, for $m \in \N$,
		\begin{equation*}
		\begin{split}
		\Psi^{(m)}(\lambda) &= \sum_{i=1}^m \frac{k_\lambda!}{(k_\lambda-i)!}(-e^{-\lambda})^{k_\lambda-i} \mathbb{B}_{m,i} \bigg(\frac{e^{-\lambda}}2, \frac{-e^{-\lambda}}3, \cdots, \frac{(-1)^{m-i+2}e^{-\lambda}}{m-i+2} \bigg) \\
		&= (-e^{-\lambda})^{k_\lambda} \sum_{i=1}^m \frac{k_\lambda!}{(k_\lambda-i)!} \mathbb{B}_{m,i} \bigg(\frac{-1}2, \frac13, \cdots, \frac{(-1)^{m-i+1}}{m-i+2} \bigg) \\
		&= (-e^{-\lambda})^{k_\lambda} \frac{k_\lambda! m!}{(k_\lambda + m)!} \mathbb{B}_{k_\lambda+m,k_\lambda}(1,-1,\cdots,(-1)^{m+1}) \\
		&= (-e^{-\lambda})^{k_\lambda} \frac{k_\lambda! m!}{(k_\lambda + m)!} (-1)^{m}\stirscd{k_\lambda + m}{k_\lambda}.
		\end{split}
		\end{equation*}
		Note that the last formula is valid also for $m=0$ since we have
		\begin{equation*}
		\Psi(\lambda) = (-e^{-\lambda})^{k_\lambda}.
		\end{equation*}
		Therefore,
		\begin{equation*}
		\begin{split}
		(\Psi \cdot L)^{(j)}(\lambda) &= \sum_{n=0}^j \binom{j}{n} L^{(n)}(\lambda) \Psi^{(j-n)}(\lambda) \\
		&= \sum_{n=0}^j \binom{j}{n} n! a_{k_\lambda-n} (-e^{-\lambda})^{k_\lambda}(-1)^{j-n} \frac{k_\lambda!(j-n)!}{(k_\lambda + j-n)!} \stirscd{k_\lambda + j -n}{k_\lambda} \\
		&= (-e^{-\lambda})^{k_\lambda} \sum_{n=0}^j a_{k_\lambda-n} (-1)^{j-n} \frac{k_\lambda! j!}{(k_\lambda + j -n)!} \stirscd{k_\lambda + j -n}{k_\lambda} \\
		&= (-e^{-\lambda})^{k_\lambda} \sum_{n=k_\lambda-j}^{k_\lambda} a_{n} (-1)^{k_\lambda+j+n} \frac{k_\lambda!j!}{(n+j)!} \stirscd{n+j}{k_\lambda}
		\end{split}
		\end{equation*}
		which means that
		\begin{equation*}
		\begin{split}
		b_{k} e^{\lambda k}&= \frac{(-1)^{k}k_\lambda!}{(k_\lambda-k)!} \sum_{j=0}^{k_\lambda-k} \stirfst{k_\lambda-k}{j} \sum_{n=k_\lambda-j}^{k_\lambda} a_{n} (-1)^{k_\lambda+j+n} \frac{j!}{(n+j)!}\stirscd{n+j}{k_\lambda}
		\end{split}
		\end{equation*}
		and thus
		\begin{multline*}
		\sum_{k=i}^{k_\lambda} \frac{(-1)^k}{(k-1)!} \stirfst{k}{i} b_{k} e^{\lambda k} \\
		\begin{aligned}
		&= \sum_{k=i}^{k_\lambda} \stirfst{k}{i} \frac{k_\lambda!}{(k_\lambda-k)!(k-1)!} \sum_{j=0}^{k_\lambda-k} \stirfst{k_\lambda-k}{j} \sum_{n=k_\lambda-j}^{k_\lambda} a_{n} (-1)^{k_\lambda+j+n} \frac{j!}{(n+j)!} \stirscd{n+j}{k_\lambda} \\
		&= k_\lambda \sum_{n=i}^{k_\lambda} a_{n} \sum_{j=k_\lambda-n}^{k_\lambda-i} \frac{j!(-1)^{k_\lambda+j+n}}{(n+j)!} \stirscd{n+j}{k_\lambda} \sum_{k} \stirfst{k}{i} \stirfst{k_\lambda-k}{j} \binom{k_\lambda-1}{k-1} \\&= k_\lambda \sum_{n=i}^{k_\lambda} a_{n} \sum_{j=k_\lambda-n}^{k_\lambda-i} \frac{j!(-1)^{k_\lambda+j+n}}{(n+j)!} \stirscd{n+j}{k_\lambda} \stirfst{k_\lambda}{i+j}\binom{j+i-1}{i-1} \\
		&= \frac{k_\lambda}{(i-1)!} \sum_{n=i}^{k_\lambda} a_{n} \sum_{j=0}^{n-i} (-1)^{j} \stirscd{k_\lambda + j}{k_\lambda} \stirfst{k_\lambda}{k_\lambda - n + i + j} \frac{(k_\lambda-n+i+j-1)!}{(k_\lambda+j)!}.
		\end{aligned}
		\end{multline*}
		Finally, the last sum equals $0$ whenever $n > i$ (see e.\,g.\ \cite[Exercise 6.19]{Graham+al:1994}). That is,
		\begin{equation*}
		\sum_{k=i}^{k_\lambda} \frac{(-1)^k}{(k-1)!} \stirfst{k}{i} b_{k} e^{\lambda k} = \frac{k_\lambda}{(i-1)!} a_{i} \stirscd{k_\lambda}{k_\lambda} \stirfst{k_\lambda}{k_\lambda} \frac{(k_\lambda-1)!}{k_\lambda!} = \frac1{(i-1)!} a_{i}
		\end{equation*}
		which ends the proof.
	\end{proof}
	
\end{appendix}

\bibliographystyle{amsplain}
\bibliography{Branching}

\end{document}